\documentclass[reqno]{amsart}
\usepackage{graphicx} 
\usepackage{amsfonts}
\usepackage{amsmath}
\usepackage{amssymb}
\usepackage{amsthm}
\usepackage{esint}
\usepackage{geometry}
\usepackage{musicography}
\newtheorem{theorem}{Theorem}[section]
\newtheorem{proposition}[theorem]{Proposition}
\newtheorem{definition}[theorem]{Definition}
\newtheorem{corollary}[theorem]{Corollary}
\newtheorem{lemma}[theorem]{Lemma}
\newtheorem{remark}[theorem]{Remark}
\usepackage{esint}
\usepackage{xcolor}
\usepackage{appendix}

\usepackage{comment}
\usepackage{hyperref}

\title{Regularity for the geodesic X-ray transform in nonsmooth geometry}

\author{Pieti Kirkkopelto}
\address{Department of Mathematics and Statistics, University of Jyv\"askyl\"a, Jyv\"askyl\"a, Finland}
\email{pieti.e.kirkkopelto@jyu.fi}

\author{Miika Manu}
\address{Department of Mathematics and Statistics, University of Jyv\"askyl\"a, Jyv\"askyl\"a, Finland}
\email{miika.a.manu@jyu.fi}

\author{Mikko Salo}
\address{Department of Mathematics and Statistics, University of Jyv\"askyl\"a, Jyv\"askyl\"a, Finland}
\email{mikko.j.salo@jyu.fi}

\begin{document}

\begin{abstract}
    In this article we give regularity results for the geodesic X-ray transform on nonsmooth simple manifolds. As an application, we improve previous injectivity results for the geodesic X-ray transform acting on $L^p$ functions. The results are based on  symbol smoothing arguments for the normal operator and mapping properties of pseudodifferential operators with low regularity symbols.
\end{abstract}

\maketitle

\section{Introduction} \label{sec_intro}

\subsection{Background and related work}

In this article we study the geodesic X-ray transform for Riemannian metrics with low regularity. A standard setting for such problems is given by a \emph{simple manifold}, which is a compact, connected and oriented Riemannian manifold $(M,g)$ with smooth boundary such that $M$ is simply connected, the boundary $\partial M$ is strictly convex (i.e.\ the second fundamental form of $\partial M$ is positive definite), and no geodesic segment has conjugate points. There are several equivalent definitions, and any simple manifold is diffeomorphic to a ball \cite{paternain2023geometric}. In this setting it is typically assumed that the Riemannian metric $g$ is $C^{\infty}$.

The \emph{geodesic X-ray transform} maps a function $f \in C(M)$ to its X-ray transform 
\[
If(x,v) = \int_0^{\tau(x,v)} f(\gamma_{x,v}(t)) \,\mathrm{d}t,
\]
where $(x,v) \in \partial SM = \{ (x,v) \in TM \,:\, x \in \partial M, \ |v|_g = 1 \}$, $\gamma_{x,v}(t)$ is the geodesic through $x$ in direction $v$ and $\tau(x,v)$ is the time when $\gamma_{x,v}$ exits $M$. Thus $If$ encodes the integrals of $f$ over maximal geodesics in $M$. If $g$ is the Euclidean metric, then $I$ is the standard X-ray transform that forms the basis of medical imaging methods such as CT and PET \cite{natterer2001mathematics}. In the case where $g$ corresponds to the sound speed of a medium such as the Earth, this transform arises in the travel time tomography problem (also called the boundary rigidity problem) of determining properties in the interior of Earth from travel times of seismic waves \cite{paternain2023geometric}. It also arises in connection with the Calder\'on problem in Electrical Impedance Tomography \cite{DKSU2009} and in transmission ultrasound tomography.

For the above reasons, there has been considerable interest in understanding invertibility properties of the geodesic X-ray transform. In the Euclidean case, invertibility was established in the classical work \cite{radon1917determination} and for radial sound speeds satisfying the Herglotz condition this was done in \cite{herglotz1907benndorfsche, romanov1967reconstructing}. In the setting of simple manifolds, injectivity of the geodesic X-ray transform is due to \cite{muhometov1977problem} based on energy estimates and Pestov identities. This has been extended to nonconvex manifolds \cite{guillarmou2021boundary} and geometries with hyperbolic trapping \cite{guillarmou2017lens}. In dimensions $n \geq 3$, there is a powerful method for injectivity results in geometries with strictly convex foliations \cite{uhlmann2016inverse}. There is a large literature on various aspects of geodesic X-ray transforms and we refer the reader to \cite{sharafutdinov1994integral, paternain2023geometric, ilmavirta2019integral, stefanov2019travel} for further details.

Many of the above results are concerned with the case where the metric $g$ has many derivatives, i.e.\ $g \in C^{\infty}$. For materials occurring in practice, such as the sound speed in the Earth, this may not be a practical assumption. Thus, it is of interest to understand minimal assumptions on $g$ to ensure injectivity of the geodesic X-ray transform. If $g$ is $C^{1,1}$, then the nonlinearity appearing in the geodesic equation is Lipschitz continuous and the existence and uniqueness theorem for ODEs ensures that there is a unique geodesic $\gamma_{x,v}$ through any point $x$ and direction $v$. On the other hand, if $g$ is only $C^{1,\alpha}$ with $\alpha < 1$, there may be branching of geodesics. The case $g \in C^{1,1}$ is therefore a reasonable low regularity setting to study.

The geodesic X-ray transform with low regularity metrics was studied in \cite{zbMATH07725202}, where it was proved that on simple manifolds with $C^{1,1}$ metric the geodesic X-ray transform is injective when acting on Lipschitz continuous functions. The proof was based on a Pestov identity in low regularity and the article included a careful definition of a $C^{1,1}$ simple manifold. A similar result for tensor fields on negatively curved manifolds was given in \cite{ilmavirta2024tensor}. However, for smooth Riemannian metrics the injectivity result is known for $L^2$ functions and not just for Lipschitz functions. The setting of $L^2$ functions includes the case of piecewise continuous (or constant) media, which is relevant for practical imaging purposes. For smooth metrics, injectivity on $L^2$ functions is based on the following regularity result: if $f \in L^2(M)$ satisfies $If = 0$, one proves that then $f$ has to be smooth. This uses the fact that $I^* I f = 0$, where the normal operator $I^* I$ is an elliptic pseudodifferential operator. Then one appeals to the injectivity result for smooth functions.

The geodesic X-ray transform acting on $L^2$ functions when $g$ is a simple metric with low regularity has been considered in \cite{ilmavirta2023microlocal}. There it was proved that if $g \in C^{n+8}$, where $n = \dim(M)$, then $I$ is injective on $L^2(M)$. This was based on a regularity result for functions $f$ in the kernel of $I$, showing that $I^* I$ is an elliptic pseudodifferential operator in a suitable nonsmooth calculus and therefore can be inverted up to a smoothing error in this calculus. However, in order to use this nonsmooth calculus in \cite{ilmavirta2023microlocal}, one had to assume quite many derivatives on $g$. One could expect that the assumption $g \in C^{n+8}$ mentioned above is far from optimal for this problem.

\subsection{Main results}

In this article, we will improve upon the regularity results of \cite{ilmavirta2023microlocal}. First we give our definition of a simple manifold.

\begin{definition} \label{def_ctwo_simple}
Let $g$ be a $C^2$ Riemannian metric in $\mathbb{R}^n$, and let $M \subset \mathbb{R}^n$ be the closure of a bounded domain with smooth boundary. We say that $(M,g)$ is simple if $\partial M$ is strictly convex (its second fundamental form is positive definite), for any $x \in M$ the map $\exp_x$ is a diffeomorphism onto $M$, and the same properties hold when $M$ is replaced by the closure of some bounded smooth domain containing $M$.
\end{definition}

In the case of $C^{\infty}$ metrics, there are many equivalent definitions for a simple manifold \cite[Section 3.8]{paternain2023geometric} and the above definition is one of them \cite[Proposition 3.8.7]{paternain2023geometric}. For low regularity metrics, related definitions may be found in \cite{zbMATH07725202, ilmavirta2023microlocal}. Our main interest is on regularity results for the X-ray transform rather than on equivalent definitions of low regularity simple manifolds, and thus we have chosen the strong definition above. Further facts related to low regularity metrics are given in Appendix \ref{sect:propofgeodesics}.

Our first main theorem is a regularity result showing that if an $L^p$ function satisfies $If = 0$, then $f$ necessarily has more regularity. We define the Sobolev spaces $W^{s,p} = H^{s,p}$ as in \cite{Taylor3}.

\begin{theorem} \label{thm_main1}
Let $(M,g)$ be a simple manifold with $C^{4+\ell,\alpha}$ Riemannian metric where $\ell \geq 1$ and $0 < \alpha < 1$. If $f \in L^p(M)$ where $1 < p < \infty$ satisfies $If = 0$, then $f \in W^{\ell,p}(M)$.
\end{theorem}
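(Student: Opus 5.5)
The approach is to study the normal operator $N = I^*I$ on a slightly larger simple manifold and to treat it as an elliptic pseudodifferential operator of order $-1$ whose symbol has limited regularity in $x$. By Definition~\ref{def_ctwo_simple} we may choose a simple manifold $(M_1,g)$ with $M \subset \mathrm{int}\, M_1$. We extend $f$ by zero to $M_1$. Every maximal geodesic of $M_1$ meets $M$ in a single segment, by strict convexity of $\partial M$ together with simplicity, so $If=0$ gives $I_1 f = 0$ and hence $N_1 f = 0$. Using polar coordinates given by $\exp_x$, which is a $C^{2+\ell,\alpha}$-type diffeomorphism when $g\in C^{4+\ell,\alpha}$, we write
\[
N_1 f(x) = 2\int_{M_1} \frac{f(y)}{\det(D\exp_x)(\exp_x^{-1}y)\,|\exp_x^{-1} y|^{n-1}}\,dy .
\]
Expanding the kernel near the diagonal gives $K(x,y) = |x-y|_{g(x)}^{-(n-1)}\,a(x,y)$, with $a$ of finite regularity. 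In this form $N_1$ becomes an operator with symbol $p(x,\xi)$ that is elliptic of order $-1$, with principal part $c_n\,|\xi|_{g(x)^{-1}}^{-1}$. The symbol belongs to a class of the type $C^{r}_* S^{-1}_{1,0}$, with $x$-regularity $r$ roughly $\ell+\alpha$ after losses. Here we must count derivatives carefully: the kernel involves $D\exp$, which loses two derivatives from $g$, and the Taylor expansion costs further derivatives.

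The next step is Taylor's symbol smoothing. We decompose $p = p^\# + p^b$, where $p^\#\in S^{-1}_{1,\delta}$ is smooth and elliptic, and $p^b \in C^r_* S^{-1-r\delta}_{1,\delta}$. We then build a parametrix $Q$ of order $1$ for $p^\#(x,D)$, so that $Q p^\#(x,D) = \mathrm{Id} + R$ with $R$ smoothing. Applying $Q$ to $N_1 f = 0$ gives
\[
f = -Rf - Q\,p^b(x,D) f .
\]
By the Taylor/Marschall mapping theorems, $p^b(x,D)\colon W^{s,p}\to W^{s+1+r\delta,p}$ for $-r(1-\delta) < s < r$. Composing with $Q$ therefore gains $r\delta$ derivatives in total. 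We bootstrap from $f\in L^p$, which we may regard as $W^{-\epsilon,p}$ so that the admissible range is respected. Each iteration gains $r\delta$ until $s$ approaches $r$, and the remainder term is $Rf\in C^\infty$. Localization is handled with cutoffs equal to $1$ near $M$; the commutator errors are of lower order and are absorbed in the same way. Restricting back to $M$ then yields $f\in W^{\ell,p}(M)$, because $r>\ell$ when $\alpha>0$. This is why the H\"older exponent is needed: it gives $r=\ell+\alpha$ and lets us reach the integer $\ell$ strictly inside the admissible range.

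I expect the main obstacle to be the first step: showing that the normal operator really is a pseudodifferential operator with symbol in $C^{\ell+\alpha}_* S^{-1}_{1,0}$ when $g\in C^{4+\ell,\alpha}$. This requires precise regularity of $(x,y)\mapsto \exp_x^{-1}y$ and of $\det D\exp_x$ near and away from the diagonal, using the facts in Appendix~\ref{sect:propofgeodesics}. It also requires writing $K(x,x-z)$ as a sum of terms homogeneous in $z$ plus a remainder with controlled $C^{r}$ dependence on $x$, and then taking the Fourier transform in $z$. Here I would first expand $\exp_x^{-1}(x+z)= z + \Gamma(x)(z,z)+\dots$ to a finite order. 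The off-diagonal part is a kernel with $C^r$ regularity in $x$ and smooth enough behaviour in $y$, and it can be shown to map $W^{s,p}$ into $W^{s+1,p}$ for the relevant $s$ directly.
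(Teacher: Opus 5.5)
\textbf{There is a genuine gap in your first step, and it sits exactly where the paper does its main work.}

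You write $N_1$ as an operator with symbol in $C^r_*S^{-1}_{1,0}$, but this is false. The quantities $\det D\exp_x$ evaluated at $\exp_x^{-1}y$, and $\exp_x^{-1}y$ itself (equivalently $a(x,y)$ and $G_{pq}(x,y)$), are only finitely differentiable in $y$. So $z\mapsto K(x,x-z)$ has only finite smoothness away from $z=0$. Its Fourier transform in $z$ therefore cannot satisfy $|D_\xi^\beta p(x,\xi)|\lesssim\langle\xi\rangle^{-1-|\beta|}$ for \emph{all} $\beta$, as Definition~\ref{def_nonsmooth_symbol} requires.

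Only the finitely many homogeneous terms, whose coefficients are frozen at $y=x$, are genuine symbols. The Taylor remainder defines a separate weakly singular integral operator $R_{\mathrm{rem}}$. It must stay in the parametrix identity
\[
QN_1=\mathrm{Id}+S+Q\,p^b(x,D)+QR_{\mathrm{rem}},
\]
where $S$ is your smoothing error. Your formula $f=-Rf-Q\,p^b(x,D)f$ silently drops it.

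The only bound you offer for the non-symbolic part is $W^{s,p}\to W^{s+1,p}$. Since $Q$ has order $+1$, this makes $QR_{\mathrm{rem}}$ of order $0$, and the bootstrap does not move at all. You need a gain of two derivatives, and proving that is the content of Section~\ref{section:mappingpropofremainderterms}:
\begin{itemize}
\item Keep only the leading term $c_n\psi^2(x)|g(x)|^{1/2}|x-y|_{g(x)}^{1-n}$. The remainder kernel is then $K(x,y,x-y)$, with $K$ odd and positively $-(n-2)$-homogeneous in the last variable and $C^{k-3}$ in $(x,y)$ (Lemma~\ref{arrankerneli}).
\item Two derivatives produce Calder\'on--Zygmund kernels. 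Oddness of $K$ makes the boundary terms vanish (the first derivatives are even) and gives the second derivatives mean zero on the sphere.
\item The commutator Lemma~\ref{commu} upgrades this to $R\colon W^{l,p}\to W^{l+2,p}$ for $l\le k-5$ (Corollary~\ref{rmappingproperties}).
\end{itemize}
This is precisely where $k=4+\ell$ comes from.

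\textbf{Second, the mapping range you quote is wrong in a way that costs one derivative.} Taylor's theorem gives $p(x,D)\colon H^{s+m,p}\to H^{s,p}$ for $-(1-\delta)r<s<r$. The restriction is on the \emph{target} index: the output cannot be more regular than the coefficients. For an elliptic symbol of order $-1$ with $C^r_*$ dependence on $x$, the bootstrap therefore stops at $W^{r-1-\varepsilon,p}$, not $W^{r-\varepsilon,p}$. For example, $a(x)\langle D\rangle^{-1}u=h$ with $h$ smooth gives $u=\langle D\rangle(h/a)$, which is in general no better than $W^{r-1-\varepsilon,p}$. With your count $r\approx\ell+\alpha$ you would only reach $W^{\ell-1+\alpha-\varepsilon,p}$.

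The paper avoids any loss here by smoothing only $p=c_n\psi^2(x)(1-\tilde\psi(\xi))|\xi|_{g(x)}^{-1}$. This symbol retains the full $C^{k,\alpha}$ regularity of $g$ (Lemma~\ref{symbolclass}), so $Q\,p^b(x,D)f\in W^{k+\alpha-2-\varepsilon,p}$, far more than needed. The threshold is then set entirely by the integer gain of $R$. In particular, $\alpha>0$ is not what lets you reach the integer $\ell$; it only places $g$ in the Zygmund class $C^{k+\alpha}_*$ where Taylor's calculus applies.
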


This improves the corresponding result in \cite{ilmavirta2023microlocal} that considered functions in $L^2(M)$ when $g \in C^k$ and $k \geq \ell + 6 + n/2$. The assumption that $g$ has more than four derivatives is used in proving sufficient smoothing estimates for the error term $R$ in \eqref{n_decomposition_first} and we do not know if this can be improved. Combining Theorem \ref{thm_main1} with the injectivity result on Lipschitz continuous functions from \cite{zbMATH07725202}, we obtain the following injectivity result.

\begin{theorem} \label{thm_main2}
Let $(M,g)$ be a simple manifold with $C^{5,\alpha}$ Riemannian metric where $0 < \alpha < 1$, and let $1 < p < \infty$. If $f \in L^p(M)$ satisfies $If = 0$, then $f = 0$.
\end{theorem}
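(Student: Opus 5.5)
The plan is to reduce Theorem \ref{thm_main2} to Theorem \ref{thm_main1} combined with the injectivity result for Lipschitz functions on $C^{1,1}$ simple manifolds from \cite{zbMATH07725202}. Let $f \in L^p(M)$ with $If = 0$. Since $g \in C^{5,\alpha} = C^{4+\ell,\alpha}$ with $\ell = 1$, Theorem \ref{thm_main1} gives only $f \in W^{1,p}(M)$. This is not Lipschitz regularity unless $p > n$, so a bootstrap in the integrability exponent is needed, keeping $\ell = 1$ fixed.

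First I would reduce to $p \ge 2$, or more generally enlarge $p$ step by step. Since $M$ is bounded, $L^p \subset L^{p'}$ for $p' \le p$, so there is no loss in lowering $p$. The key step is Sobolev embedding. If $f \in W^{1,p}(M)$ with $p < n$, then $f \in L^{p_1}(M)$ with $1/p_1 = 1/p - 1/n$; here I use that $M$ is a bounded smooth domain, so $W^{1,p}(M)$ embeds as usual. Applying Theorem \ref{thm_main1} again with exponent $p_1$ gives $f \in W^{1,p_1}(M)$. After finitely many iterations the reciprocal exponent drops by $1/n$ each time, so we reach some $q > n$ with $f \in W^{1,q}(M)$. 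If some step lands exactly at $p_k = n$, I would replace it by a slightly smaller exponent before embedding. By Morrey's inequality, $f \in C^{0,1-n/q}(M)$, which is Hölder but still not Lipschitz.

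To obtain Lipschitz regularity I would raise $q$ to $\infty$ in the limit, which is insufficient. The better option is to exploit Theorem \ref{thm_main1} with $\ell = 1$ more cleverly. Since $f \in W^{1,q}$ for every $q < \infty$, we get $f \in C^{0,\beta}$ for all $\beta < 1$, but not $C^{0,1}$. I expect this to be the main obstacle. I would handle it by checking whether the injectivity argument (Pestov identity) of \cite{zbMATH07725202} actually needs only $f \in W^{1,q}$ for some large $q$, or even $f \in H^1$. In that proof Lipschitz regularity is used to make $u^f(x,v) = \int_0^{\tau} f(\gamma_{x,v}(t))\,dt$ differentiable along $SM$ with suitable integrability, and to justify the integrations by parts in the Pestov identity. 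A density argument with $W^{1,q}$, $q > n$, approximated by smooth $f_j$ should suffice: $u^{f_j} \to u^f$ in $W^{1,q}(SM)$ using the $C^{1}$ dependence of the geodesic flow on initial data for $C^{1,1}$ (here $C^{5,\alpha}$) metrics, and both sides of the Pestov identity pass to the limit.

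Alternatively, and more simply if the paper's machinery permits, I would use the elliptic parametrix from the proof of Theorem \ref{thm_main1}, $Q N = \mathrm{Id} + R$ with $N = I^*I$. Since $Nf = 0$, this gives $f = -Rf$. Because $R$ maps $L^\infty$ into $W^{1,q}$ for all $q$, and in fact into $C^{0,1}$ or better given the extra $\alpha$-Hölder room, one more application would yield $f \in C^{0,1}$. The conclusion $f = 0$ then follows directly from \cite{zbMATH07725202}, since a $C^{5,\alpha}$ simple manifold in the sense of Definition \ref{def_ctwo_simple} is in particular $C^{1,1}$ simple.
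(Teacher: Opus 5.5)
Your first stage matches the paper's. Theorem \ref{thm_main1} with $\ell=1$ plus Sobolev embedding, iterated in the integrability exponent, gives $f\in W^{1,q}$ for all $q<\infty$, hence $f\in C^{\beta}$ for all $\beta<1$. You also correctly see that this is not yet Lipschitz. The gap is that neither of your two ways past this point is an argument.

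Route (a) is a program rather than a proof: it means re-proving the low-regularity Pestov identity argument of \cite{zbMATH07725202} for $W^{1,q}$ functions. Worse, the density step does not preserve the hypothesis. Smooth approximants $f_j$ do not satisfy $If_j=0$, so $u^{f_j}$ does not vanish on $\partial SM$, and the identity gives nothing without extra control of boundary terms.

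Route (b) rests on the claim that the remainder maps $L^\infty$ into $C^{0,1}$, which is false in general. In $QN=\mathrm{Id}+QP^{\flat}+QR'+QR+S$ the bottleneck is $QR$, which gains only one derivative. An operator of order $-1$ maps $L^\infty$ only into the Zygmund class $C^1_*$, since its first derivatives are Calder\'on--Zygmund type and map $L^\infty$ only into $\mathrm{BMO}$. ``$W^{1,q}$ for all $q$'' is exactly where you were already stuck. Nor can you stay in the Sobolev scale: for $g\in C^{5,\alpha}$ the kernel $K$ of $R$ is only $C^{2,\alpha}$ in $(x,y)$. So $[\partial_j,R]$ has a $C^{1,\alpha}$ kernel and maps $L^p$ only into $W^{1,p}$, which is why Theorem \ref{thm_main1} stops at $\ell=1$.

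What is missing is a H\"older-space estimate for the nonsmooth remainder, and this is where the $\alpha$ in $C^{5,\alpha}$ is really used. The paper shows that if $f\in C^{\gamma}$ and $f\in W^{1,p}$ for all $p$ (both supplied by your bootstrap), then $Rf\in C^{2,\gamma}$. By the commutator identity of Lemma \ref{commu},
$\partial_i\partial_jRf=\partial_iR\partial_jf+[\partial_j,R]\partial_if+[\partial_i,[\partial_j,R]]f$.
The first two terms lie in $W^{1,p}$ for every $p$ by the Sobolev estimates, hence are H\"older by Morrey. The double commutator has a kernel made of second $(x,y)$-derivatives of $K$. That kernel is still $-(n-2)$-homogeneous in the last variable and $C^{\alpha}$ in $(x,y)$, so it maps $C^\alpha_0$ to $C^\alpha$ by the direct difference-quotient estimate of Lemma \ref{alpha_mapping}.

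Then use $-f=(QP^{\flat}f+QR'f+Sf)+QRf$. The bracket lies in $W^{3+\alpha-\varepsilon,p}$ for all $p$, hence in $C^2$. The order-one operator $Q$ maps $C^{2,\gamma}$ into $C^{1,\gamma}$, so $f\in C^{1,\gamma}$ with $\gamma=\min\{\alpha,\beta\}$. That is Lipschitz, and only then does \cite{zbMATH07725202} apply. Your phrase ``given the extra $\alpha$-H\"older room'' points in the right direction. But this H\"older mapping property of $R$ is the substantive step of the proof, and you assert it rather than prove it.
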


Again, this extends the earlier result in \cite{ilmavirta2023microlocal} which assumed $g \in C^{n+8}$. We remark that if one would take a slightly weaker definition of a $C^2$ simple manifold ($\partial M$ is strictly convex and $\exp_x$ is a diffeomorphism onto $M$ for any $x \in M$), then Theorems \ref{thm_main1}--\ref{thm_main2} would hold for $f \in L^p$ with $\mathrm{supp}(f) \subset M^{\mathrm{int}}$. Moreover, given a $C^2$ analogue of \cite[Proposition 3.8.7]{paternain2023geometric}, the theorems would hold as stated even with this weaker definition.

Our proof of Theorem \ref{thm_main1} is still based on the normal operator $N = I^* I$: we show that $N$ is invertible modulo a smoothing term, and thus $If = 0$ implies that $f$ has additional regularity. To do this, we will write 
\[
N = P + R,
\]
where $P$ is an elliptic pseudodifferential operator whose symbol has finite regularity in $x$ and $R$ is a smoothing error term. The main point is that we will use a nonsmooth calculus and symbol smoothing arguments as in \cite[Section 13.9]{Taylor3} to reduce matters to an elliptic operator in the smooth calculus. We remark that the results in \cite{Taylor3} are formulated for the Zygmund spaces $C^r_*$, but since we only consider the case $r=k+\alpha$ with $k\in\mathbb{N}_{\ge 0}$ and $\alpha\in (0,1)$ and in this case $C^r_*\cong C^{k,\alpha}$, we will state our results for the Hölder spaces $C^{k,\alpha}$.

We explain the argument in somewhat more detail. Full details can be found in Section \ref{sect:proofsofmainresults}. By the definition of a simple manifold, we may assume that we are working in a global coordinate chart in $\mathbb{R}^n$. When $g \in C^{k,\alpha}$, where $k \ge 2$ and $(M,g)$ is simple, the normal operator has the explicit form 
\[
Nf(x) = \int_M \frac{a(x,y)}{d(x,y)^{n-1}} f(y) |g(y)|^{1/2} \,\mathrm{d}y,
\]
where $a(x,y) \in C^{k-2,\alpha}$ is related to the derivative of the exponential map with $a(x,x) = 1$, and $|g(x)| = \det(g(x))$. Moreover, using the simplicity assumption the Riemannian distance function $d(x,y) = d_g(x,y)$ takes the form 
\[
d(x,y) = \sum_{p,q=1}^n G_{pq}(x,y) (x-y)^p (x-y)^q
\]
where $G_{pq} \in C^{k-2,\alpha}$ and $G_{pq}(x,x) = g_{pq}(x)$ (see Lemma \ref{lemma_distance}). We extend $f$ by zero to a function on $\mathbb{R}^n$ and use suitable cutoff functions supported in the simple extension of $M$ to write the integral over $\mathbb{R}^n$.  Taking suitable Taylor expansions at $y=x$ lets us write 
\begin{equation} \label{n_decomposition_first}
N f = P_1 f + R f,
\end{equation}
where $P_1$ is the Riesz potential type operator 
\[
P_1 f(x) = c_n |g(x)|^{1/2} \int_{\mathbb{R}^n} \frac{1}{|x-y|_{g(x)}^{n+1}} f(y) \,\mathrm{d}y
\]
and $R$ is a smoothing operator. More precisely, the kernel $R(x,y)$ of $R$ is of class $C^{k-3,\alpha}$ and satisfies estimates like 
\[
|R(x,y)| \lesssim |x-y|^{2-n}, \qquad |\nabla R(x,y)| \lesssim |x-y|^{1-n}, \qquad |\nabla^2 R(x,y)| \lesssim |x-y|^{-n}
\]
if $k \ge 5$. We will prove by direct singular integral arguments that $R$ maps $L^p$ to $W^{2,p}$ for $1 < p < \infty$.

\begin{remark}
     We use the notation $\lesssim$ to denote inequalities holding with uniform constants and $\lesssim_s$ for constants depending on a parameter $s$.
\end{remark} 

Now we observe that $P_1$ is in fact a pseudodifferential operator, 
\[
P_1 f(x) = (2\pi)^{-n} \int_{\mathbb{R}^n} e^{ix \cdot \xi} p_1(x,\xi) \hat{f}(\xi) \,\mathrm{d}\xi
\]
where the (nonsmooth) symbol is given by 
\[
p_1(x,\xi) = c_n |\xi|_{g(x)}^{-1}.
\]
To deal with the singularity at $\xi=0$, we introduce a cutoff function $\psi \in C^{\infty}_0(\mathbb{R}^n)$ with $\psi = 1$ for $|\xi| \leq 1/2$ and $\psi = 0$ for $|\xi| \geq 1$, and define 
\[
p(x,\xi) = (1-\psi(\xi)) p_1(x,\xi).
\]
We define the nonsmooth symbol classes $C^{k,\alpha}S^m_{1,\delta}$ as follows. 

\begin{definition} \label{def_nonsmooth_symbol}
        Let $k\in \mathbb{N}_{\geq0}$, \(\alpha\in(0,1)\), $m \in \mathbb{R}$ and $0 < \delta \leq 1$. We say that \(p(x,\xi)\in C^{k,\alpha}S^m_{1,\delta}\) if for any multi-index $\beta$ there is $C_{\beta} > 0$ such that, with $\left< \xi \right> = (1+|\xi|^2)^{1/2}$,
        \begin{itemize}
            \item \(|D^{\beta}_\xi p(x,\xi)|\leq C_{\beta}\left<\xi\right>^{m-|\beta|}\)
            \item \(||D_\xi^\beta p(\cdot,\xi)||_{C^{k,\alpha}}\leq C_\beta\left<\xi\right>^{m-|\beta|+(k+\alpha)\delta}\)
        \end{itemize}
\end{definition}

For more details on these symbol classes see \cite[Section 13.9]{Taylor3}. Since $g \in C^{k,\alpha}$, we have $p \in C^{k,\alpha} S^{-1}_{1,0}$. It follows that 
\begin{equation} \label{n_decomposition_second}
N = P + R' + R
\end{equation}
for another smoothing operator $R'$ with kernel $R'(x,y) = (2\pi)^{-n} \int_{\mathbb{R}^n}e^{i(x-y) \cdot \xi} \psi(\xi) |\xi|_{g(x)}^{-1} \,\mathrm{d}\xi$. The operator $R'$ maps $L^p$ to $W^{\ell,p}$ for $\ell \leq k$ (recall that there are cutoffs in $x$ and $y$ that we have not written down). The operator $P$ has coefficients of class $C^{k,\alpha}$ and the operator $R$ has coefficients of class $C^{k-3,\alpha}$. Hence $R$ will be the most restrictive term.

Next we apply the symbol smoothing argument in \cite[equation (9.27)]{Taylor3}. Given any $\delta \in (0,1]$, we write 
\[
p = p^{\sharp} + p^{\flat}
\]
where $p^{\sharp} \in S^{-1}_{1,\delta}$ is a symbol that is smooth in $x$, and $p^{\flat} \in C^{k,\alpha} S^{-1-r \delta}_{1,\delta}$ is a lower order nonsmooth symbol that will result in another smoothing term.

The smooth symbol $p^{\sharp}$ turns out to be elliptic and hence there is $q \in S^{1}_{1,\delta}$ such that 
\[
QP^{\sharp} = I + S
\]
where $S$ has symbol $s \in S^{-\infty}$ by the smooth pseudodifferential calculus. It follows that 
\[
QN = I + Q P^{\flat} + Q R' + Q R + S.
\]
The second term is smoothing of any order $\leq k - 1$ by \cite[Section 13.9]{Taylor3} and the third term is smoothing of order $\ell -1$ when $\ell \leq k$. Recall that $\delta \in (0,1]$ was arbitrary. Thus $Nf = 0$ implies that 
\begin{equation}\label{minusf_eq1}
-f = (QP^{\flat} f + Q R' f + Sf) + Q R f.
\end{equation}
The term in parentheses is in $W^{k+\alpha-2-\varepsilon}$. Moreover, since $R$ maps $L^p$ to $W^{2,p}$ and $Q$ is of order one, we see that $f \in W^{1,p}$ if $k \geq 5$.

This argument can be iterated to show that if $g \in C^{k,\alpha}$ with $k \geq 4 + \ell$, and if $If = 0$ where $f \in L^p$, then $f \in W^{\ell,p}$.

\subsection{Organization of article}

Section \ref{sect:proofsofmainresults} is devoted to the proofs of the main results and Section \ref{section:mappingpropofremainderterms} includes the necessary technical results used to establish the mapping properties of the smoothing remainder terms. Appendix \ref{sect:propofgeodesics} contains details on geodesics of nonsmooth Riemannian manifolds.

\subsection*{Acknowledgements}

All authors were partly supported by the Research Council of Finland (Centre of Excellence in Inverse Modelling and Imaging and FAME Flagship, grants 353091, 353092 and 359208). P.K.\ and M.M.\ were also supported by the Finnish Ministry of Education and Culture’s Pilot for Doctoral Programmes (Pilot project Mathematics of Sensing, Imaging and Modeling).

\section{Proofs of the main results}\label{sect:proofsofmainresults}

We will now give the proofs of the main theorems, based on a number of auxiliary results that will be stated and proved in Section \ref{section:mappingpropofremainderterms}.

\begin{proof}[Proof of Theorem \ref{thm_main1}]
We repeat the argument in Section \ref{sec_intro} with more details. Let $g\in C^{k,\alpha}$ for some $k\ge5$. We begin with the decomposition given in \eqref{n_decomposition_second}, i.e. 
\[Nf(x)=Pf(x)+Rf(x)+R'f(x).\]
The details are given by Lemma \ref{decomposition}.
We can then apply the symbol smoothing argument in \cite[Section 13.9]{Taylor3} to write for any $\delta\in (0,1]$ that
\begin{equation*}
    p=p^\sharp+p^\flat,
\end{equation*}
where $p^\sharp\in S^{-1}_{1,\delta}$ and $p^\flat\in C^{k,\alpha}S^{-1-r\delta}_{1,\delta}$. The symbol $p^{\sharp}$ is elliptic, since for $|\xi| \geq 1$ we have 
\begin{equation*}
    \left\vert p^{\sharp}\right\vert = \left\vert p - p^{\flat}\right\vert \geq c_n \left\vert\xi\right\vert_{g(x)}^{-1} - C \left\vert \xi \right\vert^{-1-r\delta} = \vert\xi\vert^{-1} \left( c_n \frac{\vert\xi\vert}{\vert\xi\vert_{g(x)}} - C \vert\xi\vert^{-r\delta} \right) 
\end{equation*}
and the term in brackets is bounded from below by some $c>0$ uniformly over $x$ when $\xi$ is sufficiently large.

Since $p^{\sharp}$ is elliptic, there is $q \in S^{1}_{1,\delta}$ such that, denoting $Q:=\mathrm{Op}(q)$, we have 
\[
QP^{\sharp} = I + S
\]
where $S$ has symbol $s \in S^{-\infty}$ by the smooth pseudodifferential calculus. It follows that 
\[
QN = I + Q P^{\flat} + Q R' + Q R + S.
\]
The second term is smoothing of any order strictly less than $r - 1$ by \cite[Section 13.9]{Taylor3} and by Lemma \ref{arpilkku} the third term is smoothing of order $\ell -1$ when $\ell \leq k$. Recall that $\delta \in (0,1]$ was arbitrary. Thus $Nf = 0$ implies that 
\begin{equation} \label{minusf_eq}
-f = (QP^{\flat} f + Q R' f + Sf) + Q R f,
\end{equation}
where the term in parentheses is in $W^{k+\alpha-2-\varepsilon,p}$ for any $\varepsilon>0$ by making a suitable choice of $\delta$.

Suppose then that $k\ge 5$. By Corollary \ref{rmappingproperties} the operator $R$  maps $L^p$ to $W^{2,p}$ and since $Q$ is a pseudodifferential operator of order one, we see that $f \in W^{1,p}$ if $k\ge 5$.

Let us now assume $k\ge  6$. We prove that $If = 0$ for $f \in L^p$ implies $f \in W^{2,p}$. In \eqref{minusf_eq}, the term in parentheses is already in $W^{k+\alpha-2-\varepsilon,p}$ and thus in $W^{4,p}$ for $\varepsilon>0$ small enough. We also already know that $f \in W^{1,p}$. It is thus enough to show that  if $f \in W^{1,p}$ and $k \ge 6$, then $R f \in W^{3,p}$. We then have that
\begin{equation*}
    \partial_jRf=R(\partial_jf)+[\partial_j,R]f,
\end{equation*}
where by Lemma \ref{commu} the operators \(R\) and $[\partial_j,R]$ map \(L^p\) to \(W^{2,p}\). Hence \(f\in W^{3,p}\).

By iterating this argument we can show that if $g \in C^{k,\alpha}$ with $k \ge 4 + \ell$, and if $If = 0$ where $f \in L^p$, then $f \in W^{\ell,p}$. We prove this by induction.

The case \(\ell=1\) was done above. Now assume that claim holds for every \(\ell'<\ell\). Especially we can choose \(\ell'=\ell-1\). Now by the induction assumption we have that \(f\in W^{\ell-1,p}\). Next we use Corollary \ref{rmappingproperties} to deduce that \(Rf\in W^{\ell+1,p}\), so therefore by \eqref{minusf_eq} we have that \(f\in W^{\ell,p}\), which proves the claim.
\end{proof}

As advertised, Theorem \ref{thm_main1} lets us obtain the injectivity result in Theorem \ref{thm_main2}.

\begin{proof}[Proof of Theorem \ref{thm_main2}]
Suppose $g\in C^{5,\alpha}(M)$. Let $p\in (1,\infty)$ and suppose \(f\in L^p(M)\) such that \(If=0\). Then by Theorem \ref{thm_main1} we have $f \in W^{1,p}$. If $p > n$, Sobolev embedding gives $f \in C^{\beta}$ for some $\beta > 0$. On the other hand if $p < n$ (for $p=n$ we can just decrease $p$ slightly), Theorem \ref{thm_main1} and Sobolev embedding give $f \in L^q$ for $q = \frac{np}{n-p} > p$. After iterating this argument finitely many times, we see that $f \in L^r$ for some $r > n$, and then Theorem \ref{thm_main1} and Sobolev embedding give $f \in C^{\beta}$ for some $\beta > 0$.

By Lemma \ref{höldermapping} with H\"older exponent $\gamma = \min\{ \alpha, \beta\} > 0$ we have that $f\in C^{1,\gamma}$ and hence that the function $f$ is Lipschitz. By the injectivity result for Lipschitz functions in \cite{zbMATH07725202} this lets us conclude that $f=0$.
\end{proof}

\section{Technical results}\label{section:mappingpropofremainderterms}

Let $(M,g)$ be a $C^2$ simple manifold with $M \subset \mathbb{R}^n$, and let $(\tilde{M},g)$ with $M \subset \tilde{M}^{\mathrm{int}}$ be a slightly larger simple manifold as in Definition \ref{def_ctwo_simple}. We begin by reducing the normal operator $N$ to one acting on $C^\infty_0(\mathbb{R}^n)$. Let $\psi\in C^\infty_0(\mathbb{R}^n)$ be a smooth cutoff function such that $\psi=1$ in $M$ and $\mathrm{supp}(\psi)\Subset \tilde{M}$. We identify functions $f\in C^\infty_0(M)$ with their zero extensions to $C_0^\infty(\mathbb{R}^n)$ and write 
\begin{equation*}
    (Nf)(x)=\int \frac{\psi(x)a(x,y)\vert g\vert^{1/2}(y)\psi(y)}{d(x,y)^{n-1}}f(y)\ \mathrm{d}y.
\end{equation*}
For details on the representation for the normal operator we refer to \cite[Lemma 8.1.10]{paternain2023geometric} in the $C^{\infty}$ case. The same argument works for $C^2$ metrics \cite[Lemma 8]{ilmavirta2023microlocal}.

\begin{remark}
    In this section we use the notation $\partial_i^\alpha$ to denote derivatives with respect to the $i$:th argument, $\partial_{i,j}$ the $j$:th partial derivative with respect to the $i$:th argument and for notational convenience we omit the integration domain from integrals over the whole space $\mathbb{R}^n$. 
\end{remark}

We then have the following decomposition.

\begin{lemma}\label{decomposition}
    Let $f\in C^\infty_0(M)$ and identify $f$ with its zero extension to $\mathbb{R}^n$. Then
    \begin{equation*}
        Nf=(P+R'+R)f,
    \end{equation*}
    where the operator $P$ is defined by  
    \begin{equation*}
        (Pf)(x)=c_n\int e^{i\langle x,\xi\rangle}\psi^2(x)\vert\xi\vert_{g(x)}^{-1}(1-\tilde{\psi}(\xi))\widehat{f}(\xi)\ \mathrm{d}\xi,
    \end{equation*}
    with symbol $p(x,\xi) = c_n \psi(x)^2 (1-\tilde{\psi}(\xi)) |\xi|_{g(x)}^{-1}$, 
    the operator $R'$ by
    \begin{equation*}
        (R'f)(x)=c_n\int e^{i\langle x,\xi\rangle}\psi^2(x) \vert\xi\vert_{g(x)}^{-1}\tilde{\psi}(\xi)\widehat{f}(\xi)\ \mathrm{d}\xi
    \end{equation*}
    and the operator $R$ by
    \begin{equation*}
        (Rf)(x)=\int\left(\frac{\psi(x)a(x,y)\vert g\vert^{1/2}(x)\psi(y)}{d(x,y)^{n-1}}-\frac{\psi^2(x)\vert g\vert^{1/2}(x)}{\vert x-y\vert_{g(x)}^{n-1}}\right)\psi(y)f(y)\ \mathrm{d}y.
    \end{equation*}
\end{lemma}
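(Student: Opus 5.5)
The statement is essentially an algebraic identity, so the plan is to show that $P+R'$ is exactly the Riesz-potential-type operator
\[
(P_1f)(x)=\psi^2(x)\vert g\vert^{1/2}(x)\int \frac{f(y)}{\vert x-y\vert_{g(x)}^{n-1}}\,\mathrm{d}y ,
\]
with $c_n$ chosen appropriately. Once this holds, $R$ is by definition the difference of the kernel of $N$ and the kernel of $P_1$. Since $\psi=1$ on $M\supset\mathrm{supp} f$, we may insert the extra factor $\psi(y)$ freely, so that $Nf=P_1f+Rf$. (The weight $\vert g\vert^{1/2}$ should be evaluated consistently at $y$ in the first term of $R$, as it is in $N$; evaluating it at $x$ there looks like a typo.)

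\textbf{Step 1: combine $P$ and $R'$.} Since $(1-\tilde\psi)+\tilde\psi=1$, the operators $P$ and $R'$ add up to the operator with symbol $c_n\psi^2(x)\vert\xi\vert_{g(x)}^{-1}$, where $\vert\xi\vert_{g(x)}$ denotes the dual norm $(g^{jk}(x)\xi_j\xi_k)^{1/2}$. For fixed $x$ the integral
\[
\int e^{i\langle x,\xi\rangle}\vert\xi\vert_{g(x)}^{-1}\widehat f(\xi)\,\mathrm{d}\xi
\]
converges absolutely. Indeed, $\vert\xi\vert^{-1}$ is locally integrable because $n\ge 2$, and $\widehat f$ is Schwartz. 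So $x$ can be treated as a frozen parameter.

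\textbf{Step 2: Fourier transform of the frozen symbol.} Fix $x$ and write $G=g(x)$. Use the classical fact that the inverse Fourier transform of the homogeneous distribution $\vert\xi\vert^{-1}$ on $\mathbb{R}^n$ is $C_n\vert z\vert^{1-n}$, valid as tempered distributions for $n\ge2$. Choose a matrix $A$ with $AA^T=G^{-1}$, so that $\vert A\eta\vert_{G}=\vert\eta\vert$ in the dual norm. Substituting $\xi=A\eta$ gives $\langle z,A\eta\rangle=\langle A^Tz,\eta\rangle$ and a Jacobian factor $\vert\det A\vert=\vert G\vert^{-1/2}$. This yields
\[
\mathcal{F}^{-1}\bigl(\vert\xi\vert_{G}^{-1}\bigr)(z)=C_n\vert\det A\vert\,\vert A^Tz\vert^{1-n}.
\]
Now $\vert A^Tz\vert^2=z^TAA^Tz=z^TG^{-1}z$. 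This is not $\vert z\vert_G^2$, so $A$ has been chosen the wrong way round. The fix is to take $A$ with $A^TA=G^{-1}$ instead: then $\vert A\eta\vert_G^2=\eta^TA^TGA\eta$ fails to be $\vert\eta\vert^2$. The correct bookkeeping is to substitute $\xi=B\eta$ with $B^TG^{-1}B=I$, i.e.\ $BB^T=G$. Then $\vert B^Tz\vert^2=z^TGz=\vert z\vert_G^2$ and $\vert\det B\vert=\vert G\vert^{1/2}$, which gives
\[
\mathcal{F}^{-1}\bigl(\vert\xi\vert_{G}^{-1}\bigr)(z)=C_n\vert G\vert^{1/2}\vert z\vert_G^{1-n}.
\]

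\textbf{Step 3: conclude.} Combining Steps 1 and 2 with the convolution theorem (justified by pairing the tempered distribution with the Schwartz function $f$) gives
\[
(P+R')f(x)=c_n'\,\psi^2(x)\vert g\vert^{1/2}(x)\int \vert x-y\vert_{g(x)}^{1-n}f(y)\,\mathrm{d}y ,
\]
with the normalizing constants (including the $(2\pi)^{-n}$) absorbed into $c_n$. Subtracting this from the integral representation of $N$ gives exactly the stated $R$.

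The only step with real content, and the one I expect to be the main obstacle, is Step 2. The difficulty is keeping track of the constant, the metric versus dual-metric norm, and the determinant factor under the linear change of variables. It also requires justifying the distributional Fourier transform of $\vert\xi\vert^{-1}$: this can be done via Gaussian subordination, $\vert\xi\vert^{-1}=c\int_0^\infty t^{-1/2}e^{-t\vert\xi\vert^2}\,\mathrm{d}t$, together with Fubini. Everything else is bookkeeping with the cutoffs $\psi$ and $\tilde\psi$.
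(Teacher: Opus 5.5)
Your proposal is correct and is essentially the paper's argument run in reverse. The paper adds and subtracts the frozen-coefficient Riesz kernel to get $N=P_1+R$, rewrites $P_1$ on the Fourier side using the transform $\widehat{|x-\bullet|_{g(x)}^{1-n}}(\xi)=e^{-ix\cdot\xi}|g(x)|^{-1/2}c_n|\xi|_{g(x)}^{-1}$ (which it asserts without proof), and then splits with $\tilde\psi$. You instead start from $P+R'$ and derive that same transform via the substitution $\xi=B\eta$ with $BB^{T}=g(x)$; delete the two false starts before that choice in Step 2. Your remark about the weight is also right: the paper's own proof uses $\vert g\vert^{1/2}(y)$ in the first term of $R$, consistent with $N$, so the $(x)$ in the statement is a typo.
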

\begin{proof}
    We write
    \begin{equation*}
    \begin{split}
        &(Nf)(x) \\
        &=\int \frac{\psi^2(x)\vert g\vert^{1/2}(x)}{\vert x-y\vert_{g(x)}^{n-1}}f(y)\ \mathrm{d}y+\int\left(\frac{\psi(x)a(x,y)\vert g\vert^{1/2}(y)\psi(y)}{d(x,y)^{n-1}}-\frac{\psi^2(x)\vert g\vert^{1/2}(x)}{\vert x-y\vert_{g(x)}^{n-1}}\right)f(y)\ \mathrm{d}y \\
        &=\int \frac{\psi^2(x)\vert g\vert^{1/2}(x)}{\vert x-y\vert_{g(x)}^{n-1}}f(y)\ \mathrm{d}y+\int\left(\frac{\psi(x)a(x,y)\vert g\vert^{1/2}(y)\psi(y)}{d(x,y)^{n-1}}-\frac{\psi^2(x)\vert g\vert^{1/2}(x)}{\vert x-y\vert_{g(x)}^{n-1}}\right)\psi(y)f(y)\ \mathrm{d}y \\
        &=:(P_1f)(x)+(Rf)(x).
    \end{split}
\end{equation*}
Let then $\tilde{\psi}$ be a smooth cutoff function with $\tilde{\psi}=1$ in $\Bar{B}(0;1/2)$ and $\mathrm{supp}(\tilde{\psi})\subset B(0;1)$. By using the Fourier transform identities $\int f(y) \overline{h(y)} \ \mathrm{d}y = (2 \pi)^{-n} \int \hat{f}(\xi) \overline{\hat{h}(\xi)} \,d\xi$ with $h(y) = |x-\bullet|_{g(x)}^{1-n}$ and 
\[
\hat{h}(\xi) = e^{-ix \cdot \xi} |g(x)|^{-1/2} \frac{c_n}{|\xi|_{g(x)}}
\]
where $c_n$ is a dimension-dependent constant, for $f\in C^\infty_0(M)$ we can write
\begin{equation*}
    \begin{split}
        (P_1f)(x)
        &=\frac{1}{(2\pi)^n}\int e^{i\langle x,\xi\rangle}c_n\psi^2(x) \vert \xi\vert_{g(x)}^{-1}\widehat{f}(\xi)\ \mathrm{d}\xi \\
        &=\frac{1}{(2\pi)^n}\int e^{i\langle x,\xi\rangle}c_n\psi^2(x)  \vert \xi\vert_{g(x)}^{-1}(1-\tilde{\psi}(\xi))\widehat{f}(\xi)\ \mathrm{d}\xi \\
        &\quad +\frac{1}{(2\pi)^n}\int e^{i\langle x,\xi\rangle}c_n\psi^2(x) \vert \xi\vert_{g(x)}^{-1}\tilde{\psi}(\xi)\widehat{f}(\xi)\ \mathrm{d}\xi \\
        &=:(Pf)(x)+(R'f)(x). \qedhere
    \end{split}
\end{equation*}
\end{proof}

The symbol $p$ in Lemma \ref{decomposition} belongs to the nonsmooth symbol class in Definition \ref{def_nonsmooth_symbol}:

\begin{lemma}\label{symbolclass}
    Suppose $g\in C^{k,\alpha}(M)$. Then $p\in C^{k,\alpha}S^{-1}_{1,0}$.
\end{lemma}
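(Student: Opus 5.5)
The plan is to verify the two bullet conditions of Definition \ref{def_nonsmooth_symbol} with $m=-1$ and $\delta=0$ (in fact any $\delta$ works, since the bounds below hold with exponent $m-|\beta|$). We write
\[
p(x,\xi)=c_n\,\psi(x)^2\,(1-\tilde\psi(\xi))\,F(g^{-1}(x),\xi),\qquad F(h,\xi)=\bigl(h^{ij}\xi_i\xi_j\bigr)^{-1/2},
\]
where $|\xi|_{g(x)}$ is the dual norm given by the inverse matrix $g^{-1}(x)=(g^{ij}(x))$. Since $g\in C^{k,\alpha}$ is positive definite on the compact set $\mathrm{supp}\,\psi$, there are constants $0<\lambda\le\Lambda$ with $\lambda|\xi|^2\le g^{ij}(x)\xi_i\xi_j\le\Lambda|\xi|^2$ there. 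The entries $g^{ij}$ are rational functions of $g_{ij}$ with nonvanishing denominator $\det g$, hence $g^{ij}\in C^{k,\alpha}$ on $\mathrm{supp}\,\psi$. Extend $\psi^2 g^{-1}$ suitably, or simply note that all $x$-dependence carries the factor $\psi^2$.

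For the first condition, fix $x$ and differentiate in $\xi$. On $\{|\xi|\ge 1/2\}$, the function $F(h,\cdot)$ is smooth and homogeneous of degree $-1$ in $\xi$. Hence $|D_\xi^\beta F(h,\xi)|\le C_\beta |\xi|^{-1-|\beta|}$, with $C_\beta$ uniform over the compact set $K$ of matrices $h$ having eigenvalues in $[\lambda,\Lambda]$. The factor $1-\tilde\psi$ vanishes for $|\xi|\le1/2$, and its derivatives are compactly supported. So Leibniz' rule gives $|D^\beta_\xi p|\le C_\beta\langle\xi\rangle^{-1-|\beta|}$.

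For the second condition, note that $D_\xi^\beta p(x,\xi)$ is a finite sum of terms of the form $\psi(x)^2\,D^{\beta_1}_\xi(1-\tilde\psi)(\xi)\,(D^{\beta_2}_\xi F)(g^{-1}(x),\xi)$. The key point is that $G_{\beta_2}(h,\xi):=D_\xi^{\beta_2}F(h,\xi)$ is smooth, indeed real analytic, in $h$ on a neighbourhood of $K$. Moreover, all its $h$-derivatives up to order $k+1$ are bounded by $C|\xi|^{-1-|\beta_2|}$ uniformly in $h\in K$ and $|\xi|\ge1/2$, again by homogeneity in $\xi$. We then use the standard composition estimate: if $u\in C^{k,\alpha}$ takes values in $K$ and $G\in C^{k+1}$ near $K$, then
\[
\|G\circ u\|_{C^{k,\alpha}}\le C\bigl(\|u\|_{C^{k,\alpha}}\bigr)\,\|G\|_{C^{k+1}(K')}
\]
for a slightly larger compact $K'$. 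This follows from the Faà di Bruno formula together with the Hölder estimate of products. Applying it with $u=g^{-1}$, $G=G_{\beta_2}(\cdot,\xi)$ and $\xi$ as a parameter gives $\|(D^{\beta_2}_\xi F)(g^{-1}(\cdot),\xi)\|_{C^{k,\alpha}}\lesssim\langle\xi\rangle^{-1-|\beta_2|}$. Multiplying by $\psi^2\in C^\infty_0$ and by the $x$-independent factor, which satisfies $|D^{\beta_1}(1-\tilde\psi)|\lesssim\langle\xi\rangle^{-|\beta_1|}$, yields
\[
\|D^\beta_\xi p(\cdot,\xi)\|_{C^{k,\alpha}}\le C_\beta\langle\xi\rangle^{-1-|\beta|}\le C_\beta\langle\xi\rangle^{-1-|\beta|+(k+\alpha)\delta}.
\]

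The main technical step is the composition estimate in $C^{k,\alpha}$ with constants tracked uniformly in the parameter $\xi$. The cleanest way to handle it is to rescale: by homogeneity, $G_{\beta_2}(h,\xi)=|\xi|^{-1-|\beta_2|}G_{\beta_2}(h,\xi/|\xi|)$. It then suffices to bound $\|G_{\beta_2}(g^{-1}(\cdot),\omega)\|_{C^{k,\alpha}}$ uniformly over $\omega$ in the unit sphere, where uniformity follows from compactness of $K'\times S^{n-1}$ and smoothness of $G_{\beta_2}$ there.
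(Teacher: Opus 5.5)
Your proof is correct and follows essentially the same route as the paper. Both arguments use $(-1)$-homogeneity in $\xi$ (away from $\xi=0$) to reduce to $\xi/|\xi|$ on the unit sphere, and then use that composing the $C^{k,\alpha}$ map $x\mapsto g(x)$ (or $g^{-1}(x)$) with a smooth function stays in $C^{k,\alpha}$, uniformly over the compact parameter set $S^{n-1}$. You spell out details the paper leaves implicit: the dual metric, the Leibniz split off of the cutoff $1-\tilde\psi$, and the Fa\`a di Bruno composition estimate.
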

\begin{proof}
Since the function
    \begin{equation*}
        x\mapsto c_n\psi^2(x)
    \end{equation*}
    is compactly supported and smooth, it suffices to verify the decay estimates for the derivatives of
    \begin{equation*}
        \tilde{p}(x,\xi) = \vert \xi\vert_{g(x)}^{-1}(1-\tilde{\psi}(\xi))
    \end{equation*}
    when $x$ is in $\mathrm{supp}(\psi)$. We begin by noting that $\tilde{p}$ is $(-1)$-homogeneous when $\vert \xi\vert\ge 1$ and smooth and bounded in $\xi$ when $\vert \xi\vert<1$. Hence
\begin{equation*}
    \vert \partial_{2}^\gamma \tilde{p}(\bullet,\xi)\vert \leq C_\gamma\langle\xi\rangle^{-1-\vert\gamma\vert}.
\end{equation*}

Suppose $\vert\beta\vert=k$. Then for all $\xi\in B(0;1)^c$ and all $\gamma$ we have by homogeneity that
\begin{equation*}
    \begin{split}
        \frac{\vert\partial_1^\beta\partial_2^\gamma \tilde{p}(x,\xi)-\partial_1^\beta \partial_2^\gamma \tilde{p}(y,\xi)\vert}{\vert x-y\vert^\alpha}&=\frac{\vert\partial_1^\beta\partial_2^\gamma \tilde{p}(x,\xi/\vert\xi\vert)-\partial_1^\beta \partial_2^\gamma \tilde{p}(y,\xi/\vert\xi\vert)\vert}{\vert x-y\vert^{\alpha}}\vert\xi\vert^{-1-\vert\gamma\vert}.
    \end{split}
\end{equation*}
When $\vert\xi\vert\ge c>0$, we have
\begin{equation*}
    \vert\xi\vert^{-1-\vert\gamma\vert}\lesssim (1+\vert\xi\vert)^{-1-\vert\gamma\vert}\lesssim \langle\xi\rangle^{-1-\vert\gamma\vert}.
\end{equation*}
Since Hölder regularity is preserved by compositions with smooth functions from the left, we have by the fact that $g\in C^{k,\alpha}$ the estimate
\begin{equation*}
    \sup_{x\neq y}\frac{\vert\partial_1^\beta\partial_2^\gamma \tilde{p}(x,\xi/\vert\xi\vert)-\partial_1^\beta \partial_2^\gamma \tilde{p}(y,\xi/\vert\xi\vert)\vert}{\vert x-y\vert^\alpha}\vert\xi\vert^{-1-\vert\gamma\vert}\lesssim_\gamma\langle\xi\rangle^{-1-\vert\gamma\vert}.
\end{equation*}
Hence
\begin{equation*}
    \Vert \partial_2^\gamma \tilde{p}(\bullet,\xi)\Vert_{C^{k,\alpha}}\lesssim_\gamma \langle\xi\rangle^{-1-\vert\gamma\vert},
\end{equation*}
giving us the result.
\end{proof}

\subsection{Mapping properties for the remainder terms: Sobolev estimates}

The remainder operator $R'$ in Lemma \ref{decomposition} has the following mapping properties.

\begin{lemma}\label{arpilkku}
    Suppose $g\in C^k$ for some $k\in \mathbb{N}_{\ge 1}$. Then $R'f\in C^k_0$ for all $f\in L^p(M)$ with $\Vert R'f\Vert_{W^{k,p}}\lesssim \Vert f\Vert_{L^p}$.
\end{lemma}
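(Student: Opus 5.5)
Since $\tilde\psi$ is compactly supported in $\xi$, I would write $R'f$ as an integral against an explicit kernel and move all $x$-derivatives onto that kernel. Concretely, for $f\in C_0^\infty(M)$ (and then by density for $f\in L^p(M)$), Fubini gives
\[
(R'f)(x)=\psi^2(x)\int K(x,x-y)f(y)\,\mathrm{d}y,\qquad K(x,z)=\frac{c_n}{(2\pi)^n}\int e^{i\langle z,\xi\rangle}\,|\xi|_{g(x)}^{-1}\tilde\psi(\xi)\,\mathrm{d}\xi .
\]
Since $|\xi|_{g(x)}^{-1}\lesssim|\xi|^{-1}$ uniformly for $x\in\mathrm{supp}(\psi)$, and $|\xi|^{-1}$ is integrable on $B(0;1)$ because $n\ge 2$, the $\xi$-integral converges absolutely.

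Next I would establish regularity of the kernel. Derivatives in $z$ simply bring down powers $\xi^\gamma$, which are harmless on the compact support of $\tilde\psi$. For derivatives in $x$, note that $x\mapsto|\xi|_{g(x)}^{-1}=(g_{pq}(x)\xi^p\xi^q)^{-1/2}$ is the composition of $g\in C^k$ with a function that is smooth on positive definite matrices. By homogeneity,
\[
|\partial_x^\beta |\xi|_{g(x)}^{-1}|\lesssim_\beta |\xi|^{-1},\qquad |\beta|\le k,
\]
uniformly for $x$ in the compact set $\mathrm{supp}(\psi)$, where $g$ is uniformly positive definite. Differentiating under the integral sign, justified by dominated convergence with dominating function $C|\xi|^{-1}\mathbf{1}_{B(0;1)}$, gives $\partial_x^\beta\partial_z^\gamma K\in C^0$ with
\[
\sup_{x,z}|\partial_x^\beta\partial_z^\gamma K(x,z)|<\infty,\qquad |\beta|\le k,\ \gamma\text{ arbitrary}.
\]

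Combining these with the cutoff, by Leibniz and the chain rule each $\partial^\beta_x(R'f)$ with $|\beta|\le k$ is a finite sum of terms of the form
\[
\partial^{\beta_1}(\psi^2)(x)\int(\partial_x^{\beta_2}\partial_z^{\beta_3}K)(x,x-y)f(y)\,\mathrm{d}y .
\]
Since $f$ is supported in the bounded set $M$, we have $f\in L^1$ with $\|f\|_{L^1}\lesssim\|f\|_{L^p}$ by Hölder. Each term is therefore continuous in $x$ (dominated convergence again) and bounded by $C\|f\|_{L^p}$. It is also supported in $\mathrm{supp}(\psi)$, which is compact. Hence $R'f\in C^k_0$, and integrating over the compact support yields $\|R'f\|_{W^{k,p}}\lesssim\|f\|_{L^p}$.

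The point requiring the most care is the $x$-regularity of $|\xi|_{g(x)}^{-1}$ near $\xi=0$, where the symbol is singular. The homogeneity bound shows that every $x$-derivative keeps the same integrable singularity $|\xi|^{-1}$, so differentiation under the integral remains legitimate. Note also that only $g\in C^k$ is used; no Hölder continuity is needed, because the $\xi$-localization absorbs everything.
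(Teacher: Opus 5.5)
Your proposal is correct and follows essentially the same route as the paper: the paper also bounds all derivatives of order at most $k$ of $e^{i\langle x-y,\xi\rangle}|\xi|_{g(x)}^{-1}\tilde\psi(\xi)$ by $|\xi|^{-1}$ times a compactly supported function, differentiates under the integral sign to get a $C^k_0$ kernel, and concludes with H\"older's inequality. The only cosmetic differences are that the paper inserts the cutoff $\psi(y)$ where you use $\mathrm{supp}\, f\subset M$, and that you work in the variables $(x,z)=(x,x-y)$ with an explicit Leibniz/chain-rule expansion.
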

\begin{proof}
    Suppose $g\in C^k$ with $k\ge 1$, let $f\in L^p(M)$ and identify $f$ with its zero extension to $\mathbb{R}^n$. The Schwartz kernel of $R'$ is given by the absolutely convergent integral
    \begin{equation*}
        K_{R'}(x,y) = \frac{1}{(2\pi)^n}\int e^{i\langle x-y,\xi\rangle}c_n\psi^2(x) \vert \xi\vert_{g(x)}^{-1}\tilde{\psi}(\xi)\ \mathrm{d}\xi.
    \end{equation*}
    Since $f\in L^p(M)$, we can identify $R'$ with the operator with the compactly supported Schwartz kernel
    \begin{equation*}
        \tilde{K}_{R'}(x,y) = K_{R'}(x,y)\psi(y).
    \end{equation*}
    We claim that $\tilde{K}_{R'}$ is in $C^k_0(\mathbb{R}^n\times\mathbb{R}^n)$.

    To show this, we note that it suffices to show that the function
    \begin{equation*}
        (x,y)\mapsto \int e^{i\langle x-y,\xi\rangle}\vert \xi\vert_{g(x)}^{-1}\tilde{\psi}(\xi)\ \mathrm{d}\xi
    \end{equation*}
    is $C^k$ when $x$ and $y$ range over a compact set. Since $g$ is $C^k$, for all $\alpha$ and $\beta$ with $\vert\alpha\vert+\vert\beta\vert\leq k$ we have 
    \begin{equation*}
        \vert\partial_x^\alpha\partial_y^\beta( e^{i\langle x-y,\xi\rangle}\vert \xi\vert_{g(x)}^{-1}\tilde{\psi}(\xi))\vert\lesssim \vert \xi\vert^{-1}\vert \eta(\xi)\vert
    \end{equation*}
    for some compactly supported smooth function $\eta$. Since the function
    \begin{equation*}
        \xi\mapsto \vert\xi\vert^{-1}\vert\eta(\xi)\vert
    \end{equation*}
    is integrable, we can differentiate under the integral sign, giving us the claim.

    Since $\tilde{K}_{R'}\in C^k_0(\mathbb{R}^n\times\mathbb{R}^n)$, we have by Hölder's inequality that
    \begin{equation*}
        \Vert \partial^\alpha R'f\Vert_{L^p}\lesssim \Vert f\Vert_{L^p}
    \end{equation*}
    for all $\alpha$ with $\vert\alpha\vert\leq k$, giving us the claim.
\end{proof}

We have the following representation result for the Schwartz kernel of the operator $R$ in Lemma \ref{decomposition}.

\begin{lemma}\label{arrankerneli}
    Suppose $g\in C^k$ for some $k\ge 3$. Then
    \begin{equation*}
        \frac{\psi(x)\psi(y)a(x,y)\vert g(x)\vert^{1/2}}{d(x,y)^{n-1}}-\frac{\psi^2(x)\vert g(x)\vert^{1/2}}{\vert x-y\vert_{g(x)}^{n-1}}=K(x,y,x-y),
    \end{equation*}
    where the function $K:\mathbb{R}^n\times\mathbb{R}^n\times(\mathbb{R}^n\setminus\{0\})\to\mathbb{R}$ is
    \begin{itemize}
        \item of class $C^{k-3}$ in the first two arguments,
        \item compactly supported in the first argument and
        \item smooth outside the origin, odd and positively $-(n-2)$-homogeneous in the last argument.
    \end{itemize}
\end{lemma}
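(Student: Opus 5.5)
The plan is to write the difference of the two kernels as a sum of terms, each of the form (coefficient in $(x,y)$) $\times$ (function of $w=x-y$ alone), where the $w$-factor is homogeneous of degree $-(n-2)$. Throughout I treat $x,y$ and $w$ as independent variables and substitute $w=x-y$ only at the end.

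The ingredients are Lemma~\ref{lemma_distance}, read as
\[
d(x,y)^2=G_{pq}(x,y)(x-y)^p(x-y)^q,
\]
with $G_{pq}\in C^{k-2}$ and $G_{pq}(x,x)=g_{pq}(x)$, together with $a\in C^{k-2}$ and $a(x,x)=1$. Set $Q_{x,y}(w)=G_{pq}(x,y)w^pw^q$ and $Q_x(w)=g_{pq}(x)w^pw^q$. Then $d(x,y)^{n-1}=Q_{x,y}(x-y)^{(n-1)/2}$ and $|x-y|_{g(x)}^{n-1}=Q_x(x-y)^{(n-1)/2}$.

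First I apply the first-order Taylor formula with integral remainder in $y$ around $y=x$ to $\psi(y)a(x,y)$ and to $G_{pq}(x,y)$:
\[
\psi(y)a(x,y)-\psi(x)=\sum_j A_j(x,y)(x-y)^j,\qquad G_{pq}(x,y)-g_{pq}(x)=\sum_j H_{pq,j}(x,y)(x-y)^j .
\]
For instance, $A_j(x,y)=-\int_0^1 \partial_{y_j}\bigl(\psi a\bigr)(x,x+s(y-x))\,\mathrm{d}s$. Since $a$ and $G$ are $C^{k-2}$, the coefficients $A_j$ and $H_{pq,j}$ are $C^{k-3}$; this is exactly where the loss of three derivatives comes from. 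This needs $k\ge 3$.

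Next I split the kernel difference into two pieces:
\[
\frac{\psi(x)|g(x)|^{1/2}\sum_j A_j(x,y)(x-y)^j}{Q_{x,y}(x-y)^{(n-1)/2}}
\]
and
\[
\psi(x)^2|g(x)|^{1/2}\Bigl(Q_{x,y}(x-y)^{-(n-1)/2}-Q_x(x-y)^{-(n-1)/2}\Bigr).
\]
For the second piece I interpolate linearly between the two quadratic forms, $Q_t=(1-t)Q_x+tQ_{x,y}$, and differentiate in $t$:
\[
Q_{x,y}(w)^{-\frac{n-1}{2}}-Q_x(w)^{-\frac{n-1}{2}}=-\frac{n-1}{2}\int_0^1 \sum_{p,q,j}H_{pq,j}(x,y)\,w^jw^pw^q\,Q_t(w)^{-\frac{n+1}{2}}\,\mathrm{d}t .
\]
This yields the candidate
\[
K(x,y,w)=\psi(x)|g(x)|^{1/2}\Bigl[\sum_j \frac{A_j(x,y)w^j}{Q_{x,y}(w)^{(n-1)/2}}-\frac{n-1}{2}\psi(x)\int_0^1\sum_{p,q,j}\frac{H_{pq,j}(x,y)w^jw^pw^q}{Q_t(w)^{(n+1)/2}}\,\mathrm{d}t\Bigr].
\]

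I would then check the three properties for this $K$.
\begin{itemize}
\item Compact support in $x$ comes from the factor $\psi(x)$.
\item In $w$, each term is a linear or cubic monomial divided by an even function homogeneous of degree $n-1$ or $n+1$. So $K$ is odd in $w$ and positively homogeneous of degree $1-(n-1)=3-(n+1)=-(n-2)$.
\item Smoothness in $w\neq 0$ and $C^{k-3}$ regularity in $(x,y)$ (including differentiating under the $t$-integral) follow once $Q_{x,y}$ and $Q_t$ are uniformly positive definite. Then $Q_t(w)^{-(n+1)/2}$ is a smooth function of $(G(x,y),g(x),t,w)$ for $w\neq0$, and composing with the $C^{k-2}$ maps $G$ and $g$ keeps the product $C^{k-3}$.
\end{itemize}

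The step I expect to be the main obstacle is this uniform positive definiteness of $G(x,y)$ on $\operatorname{supp}\psi\times\operatorname{supp}\psi$, including away from the diagonal. I would argue that $G(x,y)$ is continuous, equals $g(x)>0$ on the diagonal, and satisfies $Q_{x,y}(x-y)=d(x,y)^2>0$ off the diagonal. If Lemma~\ref{lemma_distance} gives $G$ as an averaged metric pulled back along the inverse exponential map, positive definiteness follows directly. Otherwise I would localize with a cutoff near the diagonal and treat the off-diagonal part separately, where the kernel is $C^{k-2}$ anyway. Once this is settled, $Q_t$ is positive definite as a convex combination of positive definite forms, with a uniform lower bound by compactness.
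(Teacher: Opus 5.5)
Your proposal is correct and is essentially the paper's argument. The paper Taylor-expands the whole function $\tilde K(x,y,z)=\psi(x)\psi(y)a(x,y)|g(x)|^{1/2}(G_{ij}(x,y)z^iz^j)^{-(n-1)/2}$ in $y$ along $x+t(y-x)$ with $z$ frozen, and sets $K(x,y,z)=-\langle\int_0^1\nabla_2\tilde K(x,x+t(y-x),z)\,\mathrm{d}t,z\rangle$. You instead expand $\psi a$ and $G$ separately and interpolate linearly between the forms $g(x)$ and $G(x,y)$. The mechanism is the same: one factor of $z$ times an even $-(n-1)$-homogeneous function, with coefficients one derivative below $a,G\in C^{k-2}$.

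The positive definiteness you flag is used silently in the paper, which in fact needs it along the whole segment. Your first suggested justification does not settle it. Lemma \ref{lemma_distance} gives $G=A^{T}g(x)A$, where $A(x,y)$ is the \emph{average} over the coordinate segment of the Jacobian of $\exp_x^{-1}$. Such a $G$ is only obviously semidefinite, and far from the diagonal $A$ can actually be singular, for instance when $g$ is the pullback of the Euclidean metric under a strong twist of the disk. The identity $Q_{x,y}(x-y)=d(x,y)^2>0$ only controls the single direction $x-y$.

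Your localization fallback is the right fix. Near the diagonal, $G(x,y)$ is uniformly close to $g(x)$, and the Taylor segments stay in $\tilde M$. The off-diagonal piece $F(x,y)$ is $C^{k-2}$ and vanishes near the diagonal. You can put it in the required form via $K_{\mathrm{off}}(x,y,z)=F(x,y)|x-y|^{n-3}\langle z,x-y\rangle|z|^{1-n}$, which is odd and positively $-(n-2)$-homogeneous in $z$ and equals $F(x,y)$ at $z=x-y$.
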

\begin{proof}
    By Lemma \ref{lemma_distance} we can write
    \begin{equation*}
        \frac{\psi(x)\psi(y)a(x,y)\vert g(x)\vert^{1/2}}{d(x,y)^{n-1}}=\frac{\psi(x)\psi(y)a(x,y)\vert g(x)\vert^{1/2}}{(G_{ij}(x,y)(x-y)^i(x-y)^j)^{(n-1)/2}}.
    \end{equation*}
    We use the Einstein summation convention, i.e.\ a repeated index in both lower and upper position will be summed from $1$ to $n$. Since $g\in C^{k}$, we have by Lemmas \ref{lemma_distance} and \ref{lemma_geodesic_ckminusone} that $a,G\in C^{k-2}$ and hence we can take the Taylor expansion in the second argument of the function
    \begin{equation*}
        \Tilde{K}:(x,y,z)\mapsto \frac{\psi(x)\psi(y)a(x,y)\vert g(x)\vert^{1/2}}{(G_{ij}(x,y)z^i z^j)^{(n-1)/2}}
    \end{equation*}
    at $y=x$
    to write
    \begin{equation*}
        \frac{\psi(x)\psi(y)a(x,y)\vert g(x)\vert^{1/2}}{d(x,y)^{n-1}}=\frac{\psi^2(x)\vert g(x)\vert^{1/2}}{\vert x-y\vert_{g(x)}^{n-1}}+\left\langle \int_0^1(\nabla_2\Tilde{K})(x,x+t(y-x),x-y)\ \mathrm{d}t, y-x \right\rangle.
    \end{equation*}
    We define the kernel $K$ by
    \begin{equation*}
        K(x,y,z)=-\left\langle \int_0^1(\nabla_2\Tilde{K})(x,x+t(y-x),z)\ \mathrm{d}t,z\right\rangle
    \end{equation*}
    and note that now
    \begin{equation*}
        \frac{\psi(x)\psi(y)a(x,y)\vert g(x)\vert^{1/2}}{d(x,y)^{n-1}}-\frac{\psi^2(x)\vert g(x)\vert^{1/2}}{\vert x-y\vert_{g(x)}^{n-1}}=K(x,y,x-y).
    \end{equation*}
    
    Since the function $\Tilde{K}$ is even and positively $-(n-1)$-homogeneous in the last argument, so is the function $\nabla_2\Tilde{K}$. Hence the function $K$ is odd and positively $-(n-2)$-homogeneous in the last argument. Since the function $\psi$ is compactly supported, the function $K$ is compactly supported in the first argument. Regularity follows from the regularity of the functions $a$ and $G$.
\end{proof}

\begin{remark}
    Since our function $f$ is supported in $M$, we may replace the kernel $K$ with the kernel
\begin{equation*}
    (x,y,z)\mapsto \psi'(y)K(x,y,z),
\end{equation*}
where $\psi'$ is a compactly supported smooth cutoff function with $\psi'=1$ in $M$. From now on we identify the Schwartz kernel of the operator $R$ with this modified kernel.
\end{remark}

We then claim the following.

\begin{proposition}\label{Highermapping}
   Suppose $K:\mathbb{R}^n\times\mathbb{R}^n\times(\mathbb{R}^n\setminus\{0\})\to \mathbb{R}$ is
    \begin{itemize}
        \item of class $C^{k+2}$ and compactly supported in the first two arguments and
        \item smooth outside the origin, odd and positively $-(n-2)$-homogeneous in the last argument.
    \end{itemize}
    Then the associated integral operator $T_K$, defined on $\mathcal{S}$ by the uniformly convergent integral
    \begin{equation*}
        (T_K\phi)(x)=\int K(x,y,x-y)\phi(y)\ \mathrm{d}y,
    \end{equation*}
    extends uniquely to a bounded operator $T_K:W^{l,p}\to W^{l+2,p}$ for $l\leq k$.
\end{proposition}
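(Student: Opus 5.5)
The plan is to reduce everything to the case $l=0$ by commuting derivatives through $T_K$. Then I would prove the $l=0$ estimate $L^p\to W^{2,p}$ by Calder\'on--Zygmund theory for kernels with variable coefficients. Throughout I work with $\phi\in\mathcal S$ and obtain the unique bounded extension by density at the end.

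\emph{Reduction in $l$.} For a kernel of the form $K(x,y,x-y)$, the operator $\partial_{x_j}+\partial_{y_j}$ does not act on the third argument:
\begin{equation*}
(\partial_{x_j}+\partial_{y_j})\bigl[K(x,y,x-y)\bigr]=(\partial_{1,j}K+\partial_{2,j}K)(x,y,x-y).
\end{equation*}
Writing $\partial_{x_j}=(\partial_{x_j}+\partial_{y_j})-\partial_{y_j}$ and integrating by parts in $y$ gives
\begin{equation*}
\partial_j T_K\phi = T_{K_j}\phi + T_K(\partial_j\phi), \qquad K_j:=\partial_{1,j}K+\partial_{2,j}K.
\end{equation*}
The integration by parts is justified because the kernel is $O(|x-y|^{2-n})$, which is locally integrable, and because of the compact support in $y$. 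The new kernel $K_j$ has the same structure as $K$: it is compactly supported, odd, smooth and $-(n-2)$-homogeneous in $z$, and it loses one derivative of regularity in the first two arguments. Iterating $l\le k$ times, $\partial^\beta T_K\phi$ with $|\beta|=l$ becomes a finite sum of terms $T_{K'}\partial^{\gamma}\phi$ with $|\gamma|\le l$. Each $K'$ is of class $C^{k+2-l}\supset C^{2}$ in $(x,y)$. Hence it suffices to prove that every such kernel of class $C^2$ gives $T_{K'}:L^p\to W^{2,p}$.

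\emph{Base case.} Boundedness of $T_K$ on $L^p$ follows from $|K(x,y,x-y)|\lesssim |x-y|^{2-n}\chi(x)\chi(y)$ by Schur's test or Young's inequality. For first derivatives, differentiating either argument produces kernels bounded by $|x-y|^{1-n}$ on compact sets, and these are again handled by Schur's test. The bulk of the argument is second derivatives. All terms in which at most one derivative falls on the third argument are bounded by $|x-y|^{1-n}$ and are handled the same way. The remaining term is $\partial_z^2K$, which is even and $-n$-homogeneous in $z$. I would treat it distributionally rather than pointwise, since this is where the delta contribution appears. First I freeze $y=x$ by Taylor expanding in the second argument: $K(x,y,z)=K(x,x,z)+\langle\int_0^1\nabla_2K\,\mathrm dt,\,y-x\rangle$. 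The correction term is $|z|^{2-n}\cdot|z|$-type. After two derivatives it is $O(|x-y|^{-n+1})$, which is integrable, and it needs only $C^2$ in $(x,y)$ to differentiate.

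The main term is then $\int K(x,x,x-y)\phi(y)\,\mathrm dy$. Here I expand $K(x,x,\cdot)|_{S^{n-1}}=\sum_m a_m(x)Y_m$ in spherical harmonics. Each piece $a_m(x)\,(Y_m|z|^{2-n}*\phi)$ is $a_m(x)$ times a Fourier multiplier homogeneous of degree $-2$, with symbol $c_m Y_m(\xi)|\xi|^{-2}$ and $|c_m|\lesssim m^{-2}$. Its second derivatives are $a_m$ (and its derivatives, via the product rule) times order-zero homogeneous multipliers. By Mikhlin these are bounded on $L^p$ with norms $\lesssim m^{n/2+C}$; this also accounts for the $\delta$-mass term. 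The lower-order multipliers of degree $-2$ and $-1$ that appear are locally bounded since $\phi$ is compactly supported.

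\emph{Main obstacle.} The hardest step is to make the series in $m$ converge, i.e.\ to show
\begin{equation*}
\sum_m \|a_m\|_{C^2}\,m^{N}<\infty .
\end{equation*}
My first approach is to use the smoothness of $K$ in $z$, with derivative bounds uniform in $x$ on compact sets (or joint regularity), to get rapid decay of $\|a_m\|_{C^2}$ by integrating by parts with the spherical Laplacian. If only $C^2$ joint control is available, I would instead prove boundedness of the frozen operator directly by the Calder\'on--Zygmund variable-kernel theorem applied to the odd part, after the second differentiation, with the bound $\sup_x\|K(x,x,\cdot)\|_{C^{N}(S^{n-1})}$.
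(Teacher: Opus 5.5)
Your reduction in $l$ is the same as the paper's: the identity $\partial_jT_K\phi=T_{(\partial_{1,j}+\partial_{2,j})K}\phi+T_K\partial_j\phi$ is Lemma \ref{commu}, and iterating it is Lemma \ref{higherderivativeformula}. The base case is where you diverge.

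First, a correction. Since $K$ is odd in $z$, $\partial_{3,j}K$ is even and $\partial_{3,i}\partial_{3,j}K$ is odd, not even. So no $\delta$-mass arises: its coefficient would be $\int_{S^{n-1}}\partial_{3,j}K(x,x,\omega)\,\omega_i\,\mathrm{d}\omega=0$. The paper exploits exactly this. In Lemma \ref{stronglysingulardifferentiability} the boundary term on $\partial B(x;\varepsilon)$ vanishes because $\partial_{3,j}K$ is even. The remaining principal value, with odd $(-n)$-homogeneous kernel, is handled in Lemma \ref{Singularintlplimit}. There the paper freezes $y=x$, as you do, and applies the Calder\'on--Zygmund theorem for odd variable kernels, which needs only a uniform bound on the sphere and $C^1$ dependence on $y$. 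So your fallback, read with the correct parity, is the paper's proof.

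Your main route also works: you expand $K(x,x,\cdot)$ in spherical harmonics, apply Mikhlin to $c_m\hat\xi_i\hat\xi_jY_m(\hat\xi)$, and handle the lower-order pieces locally. This route never uses oddness, so it would also cover even kernels, with the $\delta$-term built into the multiplier. It also avoids all principal-value and boundary-term limits.

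The price is regularity and bookkeeping. Convergence of the series needs rapid decay of $\|a_m\|_{C^2}$, i.e.\ joint control of many $z$-derivatives of $\partial^{\le 2}_{(x,y)}K$. That is more mixed regularity than the statement spells out or the paper's argument uses, though it does hold for the kernel of Lemma \ref{arrankerneli}. You must also sum over the $\sim m^{n-2}$ harmonics of each degree $m$. Finally, the multiplier constant actually behaves like $|c_m|\sim m^{2-n/2}$, not $\lesssim m^{-2}$. Both points are harmless once the $a_m$ decay rapidly.
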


We split the proof into several steps. The first step involves a weakly singular integral operator.

\begin{lemma}\label{weaksingularpvalue}
    Let $p\in (1,\infty)$ and $s>0$. Suppose $K:\mathbb{R}^n\times\mathbb{R}^n\times(\mathbb{R}^n\setminus\{0\})\to \mathbb{R}$ is compactly supported and continuous with respect to the first two arguments and positively $-(n-s)$-homogeneous and continuous outside the origin with respect to the third argument. Then the associated integral operator $T_K$, defined on $\mathcal{S}$ via the absolutely convergent integral
    \begin{equation*}
        (T_K\phi)(x)=\int K(x,y,x-y)\phi(y)\ \mathrm{d}y,
    \end{equation*}
    extends uniquely to a bounded operator $T_K:L^p\to L^p$ and moreover
    \begin{equation*}
        (T_Kf)(x)=\lim_{\varepsilon\to 0}\int_{B^c(x;\varepsilon)}K(x,y,x-y)f(y)\ \mathrm{d}y,
    \end{equation*}
    where the limit exists in $L^p$ and pointwise almost everywhere.
\end{lemma}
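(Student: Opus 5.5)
The plan is to dominate the kernel by a fixed integrable radial function and then use Young's inequality. The hypotheses give $|K(x,y,z)| = |z|^{-(n-s)}|K(x,y,z/|z|)|$. Continuity, together with compact support in the first two arguments, lets us set $A := \sup_{x,y,\,|\omega|=1}|K(x,y,\omega)| < \infty$. Let $\Omega$ be a compact set containing the $x$-support of $K$. Then
\begin{equation*}
|K(x,y,x-y)| \le A\,\mathbf{1}_\Omega(x)\,|x-y|^{-(n-s)}.
\end{equation*}

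The $x$-support is bounded, but the $y$-variable need not be. To handle this we also use the compact support in $y$: let $\Omega'$ contain the $y$-support. For $x\in\Omega$ and $y\in\Omega'$ we have $|x-y|\le D$, where $D = \operatorname{diam}(\Omega\cup\Omega')$. Hence
\begin{equation*}
|K(x,y,x-y)| \le A\, k(x-y), \qquad k(z) := |z|^{-(n-s)}\mathbf{1}_{\{|z|\le D\}}.
\end{equation*}
Since $s>0$, polar coordinates give $k \in L^1(\mathbb{R}^n)$. Young's inequality then yields $\|T_K\phi\|_{L^p} \le A\|k\|_{L^1}\|\phi\|_{L^p}$ for $\phi\in\mathcal{S}$. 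Moreover, $\int |K(x,y,x-y)||\phi(y)|\,\mathrm{d}y \le A(k*|\phi|)(x)<\infty$, so the defining integral converges absolutely. By density of $\mathcal{S}$ in $L^p$ (here $p<\infty$), $T_K$ extends uniquely to a bounded operator on $L^p$. The same pointwise bound shows that for every $f\in L^p$ the integral $\int K(x,y,x-y)f(y)\,\mathrm{d}y$ converges absolutely for a.e.\ $x$, because $k*|f|\in L^p$ is finite a.e. This absolutely convergent integral agrees with the extension: both are bounded linear maps on $L^p$ that coincide on $\mathcal{S}$.

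For the limit statement, write $T_K^\varepsilon f(x) := \int_{B^c(x;\varepsilon)}K(x,y,x-y)f(y)\,\mathrm{d}y$. The difference is
\begin{equation*}
|T_Kf(x)-T_K^\varepsilon f(x)| \le A\,(k\mathbf{1}_{B(0;\varepsilon)}*|f|)(x).
\end{equation*}
By Young again this is bounded in $L^p$ by $A\|k\mathbf{1}_{B(0;\varepsilon)}\|_{L^1}\|f\|_{L^p} \lesssim \varepsilon^{s}\|f\|_{L^p}\to 0$, which gives the $L^p$ convergence. For pointwise a.e.\ convergence, fix $x$ with $(k*|f|)(x)<\infty$. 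The integrands $K(x,y,x-y)f(y)\mathbf{1}_{|x-y|\ge\varepsilon}$ converge to $K(x,y,x-y)f(y)$ for $y\neq x$ and are dominated by $A\,k(x-y)|f(y)|$, which is integrable. Dominated convergence then gives $T_K^\varepsilon f(x)\to T_Kf(x)$.

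I expect no serious obstacle. The only point needing care is obtaining an integrable dominating kernel, and for that we need the compact support in the second argument, or alternatively $f$ supported in a fixed compact set. Without it, $|z|^{-(n-s)}$ is not integrable at infinity, so I will use the support assumption in both variables as stated. In the weaker reading, where support is assumed only in the first argument, I would instead bound the tail $|x-y|\ge 1$ using Hölder's inequality when $(n-s)p'>n$, or else note that in the application $f$ is supported in $M$, so the $y$-cutoff $\psi'$ from the preceding remark supplies the needed compactness.
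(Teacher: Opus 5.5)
Your proof is correct. It differs from the paper's mainly in how you obtain pointwise a.e.\ convergence.

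The boundedness and $L^p$-convergence steps are equivalent to the paper's. The paper bounds $|K(x,y,x-y)|\lesssim \chi(x)\chi(y)|x-y|^{-(n-s)}$ and applies Schur's test to get boundedness and the $O(\varepsilon^s)$ estimate for the truncations. Your convolution majorant $A\,k(x-y)$ is a translation-invariant form of that same bound, with Young's inequality in place of Schur's test.

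For a.e.\ convergence the routes genuinely differ. The paper introduces the maximal truncation operator $T_*f(x)=\sup_{0<\varepsilon<1}|T_K^\varepsilon f(x)|$ and bounds it on $L^p$ by Schur's test. It then invokes the general principle \cite[Theorem 2.1.14]{grafakos2014}: a bounded maximal operator together with pointwise convergence on the dense class $\mathcal{S}$ gives a.e.\ convergence for every $f\in L^p$. You skip this machinery. Because the kernel has an integrable majorant, $(k*|f|)(x)<\infty$ for a.e.\ $x$. After identifying the bounded extension with the absolutely convergent integral, dominated convergence applies directly at each such point.

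Your route is more elementary and self-contained. It also makes explicit that $T_Kf(x)$ is an honest Lebesgue integral for a.e.\ $x$, which the later lemmas implicitly use. The paper's route follows the standard maximal-function template for truncated singular integrals. That template is what one needs for the genuinely singular kernels of Lemma~\ref{Singularintlplimit}, where no integrable majorant exists and dominated convergence is unavailable.

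Both arguments tacitly read the continuity hypotheses as joint continuity in order to get $A<\infty$. This is harmless in the paper's application.
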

\begin{proof}
The assumptions on $K$ imply that 
\[
|K(x,y,z)| \lesssim \chi(x) \chi(y) |z|^{-(n-s)}
\]
for some $\chi \in C^{\infty}_0(\mathbb{R}^n)$. Thus $K(x,y,x-y)$ satisfies the condition in Schur's test \cite[Theorem 6.18]{folland1999}, 
\begin{equation*}
        \sup_x\int\vert K(x,y,x-y)\vert\ \mathrm{d}y+\sup_y\int \vert K(x, y, x-y)\vert\ \mathrm{d}x<\infty.
    \end{equation*}
It follows that $T_K$ is a bounded operator on $L^p$ for $1 \leq p \leq \infty$. As $\varepsilon \to 0$, we have 
\begin{equation*}
        \sup_x\int\vert {\bf 1}_{\{ |x-y| \leq \varepsilon \}} K(x,y,x-y)\vert\ \mathrm{d}y+\sup_y\int \vert {\bf 1}_{\{ |x-y| \leq \varepsilon \}}  K(x, y, x-y)\vert\ \mathrm{d}x = O(\varepsilon^s),
    \end{equation*}
    so Schur's test implies $L^p$ convergence in the second statement. For pointwise convergence a.e., define 
    \[
    T_* f(x) = \sup_{0 < \varepsilon < 1} \left| \int_{B^c(x;\varepsilon)}K(x,y,x-y)f(y)\ \mathrm{d}y \right|.
    \]
    Then 
    \[
    |T_* f(x)| \lesssim \int_{\mathbb{R}^n} \chi(x) \chi(y) |x-y|^{-(n-s)} |f(y)| \,dy,
    \]
    and Schur's test gives that $\|T_* f\|_{L^p} \lesssim \|f\|_{L^p}$. In the last statement, for any $f \in \mathcal{S}$ we have pointwise convergence as $\varepsilon \to 0$, so \cite[Theorem 2.1.14]{grafakos2014} implies pointwise almost everywhere convergence for any fixed $f \in L^p$.
\end{proof}

Next we formulate a result for Calder\'on-Zygmund type operators.

\begin{lemma}\label{Singularintlplimit}
    Suppose $K:\mathbb{R}^n\times\mathbb{R}^n\times(\mathbb{R}^n\setminus\{0\})\to\mathbb{R}$ is such that
    \begin{itemize}
        \item $K$ is of class $C^1$ with respect to the second argument and continuous with respect to the first and third arguments,
        \item $K$ is compactly supported with respect to the first two arguments and
        \item $K$ is positively $(-n)$-homogeneous and odd with respect to the third argument. 
    \end{itemize}
    Then for all $f\in L^p$ the limit
        \begin{equation*}
            (T_Kf)(x)=\mathrm{p.v.}\int K(x,y,x-y)f(y)\ \mathrm{d}y:=\lim_{\varepsilon\to 0}\int_{B^c(x;\varepsilon)} K(x,y,x-y)f(y)\ \mathrm{d}y
        \end{equation*}
        exists in $L^p$ and pointwise almost everywhere. Moreover the operator $T_K$ defined as the above limit is bounded on $L^p$.
\end{lemma}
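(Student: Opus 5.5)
The idea is to freeze the second argument of $K$ at $x$, so that the main part becomes a classical Calder\'on--Zygmund operator with variable kernel, and the remainder is only weakly singular and can be handled by Lemma \ref{weaksingularpvalue}. Write
\begin{equation*}
K(x,y,x-y)=K(x,x,x-y)+\bigl(K(x,y,x-y)-K(x,x,x-y)\bigr).
\end{equation*}
Since $K$ is $C^1$ in the second argument and compactly supported, the mean value theorem gives
\begin{equation*}
K(x,y,x-y)-K(x,x,x-y)=\Bigl\langle \int_0^1 \nabla_2K(x,x+t(y-x),x-y)\,\mathrm{d}t,\; y-x\Bigr\rangle .
\end{equation*}
We set $K_1(x,y,z)=-\langle\int_0^1\nabla_2K(x,x+t(y-x),z)\,\mathrm{d}t,z\rangle$. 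It is continuous and compactly supported in $(x,y)$, and positively $-(n-1)$-homogeneous in $z$. Lemma \ref{weaksingularpvalue} with $s=1$ then shows that this part defines a bounded operator on $L^p$. It also shows that the truncated integrals over $B^c(x;\varepsilon)$ converge in $L^p$ and almost everywhere. One should check that $K_1$ is continuous outside the origin in $z$ jointly, which follows from the uniform continuity of $\nabla_2K$ on compacts times the sphere.

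For the main part $K_0(x,z)=K(x,x,z)$, expand in spherical harmonics in $z$, as in the Calder\'on--Zygmund treatment of variable kernels. Write
\begin{equation*}
K_0(x,z)=\sum_{m,j} a_{mj}(x)\,Y_{mj}(z/|z|)\,|z|^{-n},
\end{equation*}
where only odd $m$ appear because $K$ is odd, so every term has mean zero on the sphere. Each convolution $\mathrm{p.v.}\,Y_{mj}(z/|z|)|z|^{-n}$ is a Fourier multiplier homogeneous of degree $0$, namely $\gamma_m Y_{mj}(\xi/|\xi|)$ with $|\gamma_m|\lesssim m^{-n/2}$. Hence its $L^p$ norm and its maximal truncation norm grow at most polynomially in $m$, for example via the Mikhlin theorem or the method of rotations (which applies directly to odd kernels). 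If $K$ is smooth enough in $z$ on the sphere, the coefficients $a_{mj}$ decay faster than any power of $m$, uniformly in $x$. Then
\begin{equation*}
\|T_{K_0}f\|_{L^p}\le\sum\|a_{mj}\|_\infty\|T_{mj}f\|_{L^p}
\end{equation*}
converges, and the maximal truncated operator is bounded by the same sum.

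Alternatively, and avoiding the smoothness assumption in $z$, use the method of rotations directly, since $K_0$ is odd. This gives
\begin{equation*}
\int_{B^c(x;\varepsilon)}K_0(x,x-y)f(y)\,\mathrm{d}y=\tfrac12\int_{S^{n-1}}\Omega(x,\theta)\,H_{\theta,\varepsilon}f(x)\,\mathrm{d}\theta,
\end{equation*}
where $\Omega(x,\theta)=K_0(x,\theta)$ and $H_{\theta,\varepsilon}$ is the truncated directional Hilbert transform. The bound $|\Omega|\le C\chi(x)$ together with the uniform $L^p$ boundedness of the directional maximal Hilbert transforms gives $\sup_\varepsilon|\cdot|\in L^p$ with norm $\lesssim\|f\|_{L^p}$. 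This is the route I would take, since it needs only continuity and boundedness of $K$ in $z$.

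Convergence is then standard. For $f\in C_0^\infty$ the principal value limit exists pointwise and in $L^p$, using oddness and $f(y)-f(x)=O(|x-y|)$. The maximal function bound and \cite[Theorem 2.1.14]{grafakos2014} extend almost everywhere convergence to all $f\in L^p$. $L^p$ convergence then follows by density together with dominated convergence, using the maximal function as the dominating function. Boundedness of $T_K$ on $L^p$ follows from Fatou's lemma. The main obstacle is the uniform-in-$x$ maximal estimate for the variable kernel part. The method of rotations handles it, because oddness lets the $x$-dependence sit in a bounded coefficient outside a family of uniformly bounded directional operators.
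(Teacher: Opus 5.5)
Your overall strategy is the paper's. You freeze the second argument at $y=x$ and treat the frozen odd kernel $K(x,x,x-y)$ by Calder\'on--Zygmund theory; your method-of-rotations argument with directional maximal Hilbert transforms is what the paper gets by citing \cite{calderon1956singular}. You then handle the Taylor remainder with Lemma~\ref{weaksingularpvalue}. Your convergence argument is also fine: pointwise limits on $C_0^\infty$, the maximal bound with \cite[Theorem 2.1.14]{grafakos2014}, then dominated convergence.

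However, one step fails as written: your remainder $K_1$ is \emph{not} compactly supported in $y$. Let $A$ be the projection of $\mathrm{supp}\,K$ onto the first argument, and take $y\notin\pi_2(\mathrm{supp}\,K)$. Then $K(x,y,x-y)=0$, so
\[
K(x,y,x-y)-K(x,x,x-y)=-K(x,x,x-y).
\]
For $x\in A$ this has size $|K(x,x,\widehat{x-y})|\,|x-y|^{-n}$ as $|y|\to\infty$. Equivalently, the segment $x+t(y-x)$ starts inside the support, so $\int_0^1\nabla_2K\,\mathrm{d}t$ does not vanish for large $|y|$. Hence $\sup_x\int|K_1(x,y,x-y)|\,\mathrm{d}y=\infty$, a logarithmic divergence at infinity. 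Schur's test fails, and Lemma~\ref{weaksingularpvalue} cannot be invoked.

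The paper avoids this by first writing $f(y)=\chi(y)f(y)$ with $\chi=\mathbf{1}_{\pi_2(\mathrm{supp}\,K)}$ and only then expanding. The frozen part then acts on $\chi f$, and the remainder carries the factor $\chi(y)$. That makes the remainder genuinely compactly supported, with the bound $\lesssim|x-y|^{1-n}$.

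Alternatively, you can keep your splitting and cut the remainder at $|x-y|=1$. The near part satisfies Schur's test. The far part does not depend on $\varepsilon$ once $\varepsilon<1$, and by H\"older it is bounded by
\[
C\,\mathbf{1}_A(x)\int_{|x-y|>1}|x-y|^{-n}|f(y)|\,\mathrm{d}y\lesssim\mathbf{1}_A(x)\,\|f\|_{L^p}.
\]
This uses $p<\infty$, so that $|z|^{-n}\mathbf{1}_{\{|z|>1\}}\in L^{p'}$. With either repair, the rest of your proof goes through.
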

\begin{proof}
    Let $f\in L^p$ and denote by $\chi=\textbf{1}_{\pi_2(\mathrm{supp}(K))}$. Due to the differentiability assumption on $K$ we can take a Taylor expansion at $y=x$ and write
    \begin{equation*}
        \begin{split}
            (T_{K}^\varepsilon f)(x)&=\int_{B^c(x;\varepsilon)} K(x,x,x-y)\chi(y)f(y)\ \mathrm{d}y\\
            &\quad +\int_{B^c(x;\varepsilon)}\left\langle\int_0^1(\nabla_2K)(x,ty+(1-t)x,x-y)\ \mathrm{d}t, y-x \right\rangle \chi(y)f(y)\ \mathrm{d}y \\
            &=:\int_{B^c(x;\varepsilon)} K_1(x,x-y)\chi(y) f(y)\ \mathrm{d}y+\int_{B^c(x;\varepsilon)} K_2(x,y,x-y)f(y)\ \mathrm{d}y.
        \end{split}
    \end{equation*}
    Since $K_1$ is bounded on $\mathbb{R}^n\times\mathbb{R}^n\times\mathbb{S}^{n-1}$ and positively $(-n)$-homogeneous and odd in the second argument, by \cite{calderon1956singular} we have that the limit
    \begin{equation*}
        \lim_{\varepsilon\to 0} \int_{B^c(x;\varepsilon)} K_1(x,x-y)\chi(y)f(y)\ \mathrm{d}y
    \end{equation*}
    exists in $L^p$ and pointwise almost everywhere. The kernel $K_2$ satisfies the conditions of Lemma \ref{weaksingularpvalue}, giving us the convergence of the second limit.

    To establish boundedness, we note that by \cite{calderon1956singular} we have
    \begin{equation*}
        \Vert \lim_{\varepsilon\to 0}T^\varepsilon_{K_1}\chi f\Vert_{L^p}\lesssim \Vert \chi f\Vert_{L^p}\lesssim \Vert f\Vert_{L^p}
    \end{equation*}
    and by Lemma \ref{weaksingularpvalue} we have
    \begin{equation*}
        \Vert \lim_{\varepsilon\to 0}T^\varepsilon_{K_2}f\Vert_{L^p}\lesssim \Vert f\Vert_{L^p}.
    \end{equation*}
    This gives us the claim.
\end{proof}

Before moving on to regularity estimates, we need the following calculus lemma.
\begin{lemma}\label{movingreg}
    Suppose $f\in C^1_0(\mathbb{R}^n\times\mathbb{R}^n)$. Then for all $\varepsilon>0$ the function
    \begin{equation*}
        x\mapsto \int_{B^c(x;\varepsilon)}f(x,y)\ \mathrm{d}y
    \end{equation*}
    is continuously differentiable with
    \begin{equation*}
        \partial_{i}\int_{B^c(x;\varepsilon)}f(x,y)\ \mathrm{d}y=\int_{B^c(x;\varepsilon)}(\partial_{1,i}f)(x,y)\ \mathrm{d}y-\int_{\partial B(x;\varepsilon)} f(x,y)\frac{(x-y)^i}{\vert x-y\vert}\ \mathrm{d}S(y).
    \end{equation*}
\end{lemma}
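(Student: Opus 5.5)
The plan is to reduce the lemma to a fixed domain by a translation, and then differentiate under the integral sign. Substituting $y = x + z$ gives
\begin{equation*}
    F(x):=\int_{B^c(x;\varepsilon)}f(x,y)\ \mathrm{d}y=\int_{B^c(0;\varepsilon)}f(x,x+z)\ \mathrm{d}z .
\end{equation*}
The domain of integration no longer depends on $x$. The integrand $x\mapsto f(x,x+z)$ is $C^1$, with derivative
\begin{equation*}
    \partial_i[f(x,x+z)]=(\partial_{1,i}f)(x,x+z)+(\partial_{2,i}f)(x,x+z).
\end{equation*}
Since $f\in C^1_0$, this derivative is bounded and continuous. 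For $x$ in a compact set it is also supported in a fixed compact set of $z$. Dominated convergence therefore allows differentiation under the integral sign, and it also shows that the resulting derivative is continuous in $x$. Undoing the substitution, we get
\begin{equation*}
    \partial_iF(x)=\int_{B^c(x;\varepsilon)}(\partial_{1,i}f)(x,y)\ \mathrm{d}y+\int_{B^c(x;\varepsilon)}(\partial_{2,i}f)(x,y)\ \mathrm{d}y .
\end{equation*}

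The second step is to handle the term involving $\partial_{2,i}f$ with the divergence theorem in $y$ on the exterior region $B^c(x;\varepsilon)$. Because $f(x,\cdot)$ has compact support, there is no contribution from infinity. The only boundary is the sphere $\partial B(x;\varepsilon)$. The outward unit normal of the exterior region points toward $x$, so it is $\nu(y)=(x-y)/\vert x-y\vert$. The divergence theorem gives
\begin{equation*}
    \int_{B^c(x;\varepsilon)}(\partial_{2,i}f)(x,y)\ \mathrm{d}y=\int_{\partial B(x;\varepsilon)}f(x,y)\,\nu^i(y)\ \mathrm{d}S(y).
\end{equation*}

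The main point requiring care is this sign bookkeeping. The boundary term must come out with the sign stated in the lemma, which is a minus sign in front of $f(x,y)(x-y)^i/\vert x-y\vert$. With the normal identified above, the flux term is $+f(x,y)(x-y)^i/\vert x-y\vert$, so the sign has to be reconciled carefully. As a cross-check I would redo the computation directly, differentiating $\int_{\vert z\vert\ge\varepsilon}$ in polar coordinates, and fix the sign convention from that. Everything else in the argument is routine.
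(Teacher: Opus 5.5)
Your argument is the same as the paper's: translate to the fixed domain $B^c(0;\varepsilon)$, differentiate under the integral sign, and turn the $\partial_{2,i}f$ term into a boundary integral by the divergence theorem. There is nothing to reconcile, however. Your sign is the correct one, and the formula in the statement is off by a sign.

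The paper's proof correctly reaches $-\int_{\partial B(0;\varepsilon)}f(x,x+z)\frac{z^i}{|z|}\,\mathrm{d}S(z)$, using the outward normal $-z/|z|$ of the exterior region. But when it undoes $y=x+z$, it writes $(x-y)^i$ where it should write $z^i=(y-x)^i$. Done correctly, one gets
\[
\partial_i\int_{B^c(x;\varepsilon)}f(x,y)\,\mathrm{d}y=\int_{B^c(x;\varepsilon)}(\partial_{1,i}f)(x,y)\,\mathrm{d}y+\int_{\partial B(x;\varepsilon)}f(x,y)\frac{(x-y)^i}{|x-y|}\,\mathrm{d}S(y),
\]
which is exactly your result.

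The case $n=1$ confirms this. Writing $F(x)=\int_{-\infty}^{x-\varepsilon}f(x,y)\,\mathrm{d}y+\int_{x+\varepsilon}^{\infty}f(x,y)\,\mathrm{d}y$ gives $F'(x)=\int_{B^c(x;\varepsilon)}(\partial_1 f)(x,y)\,\mathrm{d}y+f(x,x-\varepsilon)-f(x,x+\varepsilon)$. The last two terms are the boundary sum over $\{x\pm\varepsilon\}$ with weight $+(x-y)/|x-y|$.

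So commit to the plus sign (equivalently, a minus sign with $(y-x)^i$) instead of leaving it open. The discrepancy is harmless for the paper. In Lemmas \ref{weaklysingulardifferentiability} and \ref{stronglysingulardifferentiability}, only the fact that the boundary term tends to $0$ as $\varepsilon\to 0$ is used, so its sign never matters.
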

\begin{proof}
    We change variables to write
    \begin{equation*}
        \int_{B^c(x;\varepsilon)}f(x,y)\ \mathrm{d}y=\int_{B^c(0;\varepsilon)}f(x,x+z)\ \mathrm{d}z.
    \end{equation*}
    Since the function $f$ is continuously differentiable in the integration domain and compactly supported, we can differentiate under the integral sign and use the divergence theorem to write
    \begin{equation*}
        \begin{split}
            \partial_i\int_{B^c(x;\varepsilon)}f(x,y)\ \mathrm{d}y&=\int_{B^c(0;\varepsilon)}(\partial_{1,i}f)(x,x+z)\ \mathrm{d}z+\int_{B^c(0;\varepsilon)}\partial_{z^i}f(x,x+z)\ \mathrm{d}z\\
            &=\int_{B^c(0;\varepsilon)}(\partial_{1,i}f)(x,x+z)\ \mathrm{d}z-\int_{\partial B^c(0;\varepsilon)}f(x,x+z)\ \frac{z^i}{\vert z\vert}\ \mathrm{d}S(z) \\
            &=\int_{B^c(x;\varepsilon)}(\partial_{1,i}f)(x,y)\ \mathrm{d}y-\int_{\partial B(x;\varepsilon)} f(x,y)\frac{(x-y)^i}{\vert x-y\vert}\ \mathrm{d}S(y). \qedhere
        \end{split}
    \end{equation*}
\end{proof}

We need two regularity results, one for kernels with integrable derivatives and one for kernels with singular derivatives.

\begin{lemma}\label{weaklysingulardifferentiability}
    Let $s>1$ and suppose $K:\mathbb{R}^n\times\mathbb{R}^n\times(\mathbb{R}^n\setminus\{0\})\to\mathbb{R}$ is
    \begin{itemize}
        \item of class $C^1$ and compactly supported in the first two arguments and
        \item smooth and positively $-(n-s)$-homogeneous in the last argument.
    \end{itemize}
    Then the associated integral operator, defined on $\mathcal{S}$ by
    \begin{equation*}
        (T_K\phi)(x)=\int K(x,y,x-y)\phi(y)\ \mathrm{d}y,
    \end{equation*}
    extends uniquely to a bounded operator $T_K:L^p\to W^{1,p}$.
\end{lemma}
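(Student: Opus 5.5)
We prove that $T_K$ maps $L^p$ to $W^{1,p}$ by differentiating under the integral sign and controlling the resulting kernels. By Lemma \ref{weaksingularpvalue} (with $s>1>0$), $T_K$ already extends to a bounded operator on $L^p$. It therefore suffices to show, for $\phi\in\mathcal{S}$, that each $\partial_i T_K\phi$ is given by an integral operator bounded on $L^p$. Density of $\mathcal{S}$ in $L^p$ then gives the extension, and uniqueness is automatic.

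For $\phi\in\mathcal{S}$ and fixed $\varepsilon>0$, set $T^\varepsilon\phi(x)=\int_{B^c(x;\varepsilon)}K(x,y,x-y)\phi(y)\,\mathrm{d}y$. The integrand is $C^1$ in $(x,y)$ away from the diagonal, but it is not compactly supported in $y$ because $\phi$ is not. However, $K$ is compactly supported in $y$, so the product is compactly supported. After smoothing the cutoff near $|x-y|=\varepsilon$ if necessary, Lemma \ref{movingreg} applies and gives
\begin{equation*}
\partial_i T^\varepsilon\phi(x)=\int_{B^c(x;\varepsilon)}\Big[(\partial_{1,i}K)(x,y,x-y)+(\partial_{3,i}K)(x,y,x-y)\Big]\phi(y)\,\mathrm{d}y-\int_{\partial B(x;\varepsilon)}K(x,y,x-y)\phi(y)\frac{(x-y)^i}{|x-y|}\,\mathrm{d}S(y).
\end{equation*}
We treat the three terms separately.

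\textbf{Boundary term.} Since $|K|\lesssim \varepsilon^{-(n-s)}$ on the sphere $\partial B(x;\varepsilon)$ and $\phi$ is bounded, this term is $O(\varepsilon^{s-1})$ uniformly in $x$, and it is supported in a fixed compact set. Because $s>1$, it tends to $0$ in $L^p$ as $\varepsilon\to0$.

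\textbf{First integral.} The kernel $\partial_{1,i}K$ is continuous, compactly supported and $-(n-s)$-homogeneous in the last argument, so Lemma \ref{weaksingularpvalue} applies to it directly.

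\textbf{Second integral.} The kernel $\partial_{3,i}K$ is smooth and $-(n-s+1)$-homogeneous in the last argument. Its homogeneity degree equals $-(n-(s-1))$ with $s-1>0$, so Lemma \ref{weaksingularpvalue} applies here as well. Only continuity in the first two arguments is needed, which the $C^1$ assumption provides.

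Together these give $L^p$ convergence of $\partial_i T^\varepsilon\phi$ to $\tilde T_i\phi$, where $\tilde T_i$ is bounded on $L^p$. By the same lemma, $T^\varepsilon\phi\to T_K\phi$ in $L^p$. Since distributional derivatives are closed under $L^p$ limits, $\partial_i T_K\phi=\tilde T_i\phi$, and hence $\|T_K\phi\|_{W^{1,p}}\lesssim\|\phi\|_{L^p}$.

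The main technical point is justifying Lemma \ref{movingreg}, which requires $C^1_0$ data, when the integrand is only $C^1$ off the diagonal. We handle this by multiplying by a smooth radial cutoff $1-\eta(|x-y|/\varepsilon)$ in place of the sharp truncation. This replaces the boundary term by a solid term supported in the annulus $\varepsilon/2<|x-y|<\varepsilon$, bounded by $\varepsilon^{-1}\varepsilon^{-(n-s)}$ on a set of measure $\sim\varepsilon^n$. That is again $O(\varepsilon^{s-1})$ and tends to $0$. The remaining terms are handled exactly as above, with limits identical to the principal-value ones.
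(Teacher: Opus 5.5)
Your proof is correct and follows essentially the same route as the paper: you differentiate the truncated operators via Lemma \ref{movingreg}, kill the boundary term with the $O(\varepsilon^{s-1})$ bound coming from $s>1$, control the kernels $\partial_{1,i}K$ and $\partial_{3,i}K$ by Lemma \ref{weaksingularpvalue}, and pass to the limit distributionally. The only differences are cosmetic: the paper pairs with a test function where you invoke closedness of weak derivatives under $L^p$ limits, and your smooth-cutoff remark is an extra justification for applying Lemma \ref{movingreg}, whose proof in the paper in fact only uses $C^1$ regularity on $\{|x-y|\ge\varepsilon\}$.
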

\begin{proof}
    Since
    \begin{equation}
        \sup_x\int\vert (\partial_{1,i}K+\partial_{3,i}K)(x,y,x-y)\vert\ \mathrm{d}y+\sup_y\int\vert (\partial_{1,i}K+\partial_{3,i}K)(x,y,x-y)\vert\ \mathrm{d}x<\infty,
    \end{equation}
    by Schur's test the operators $T_{\partial_{1,i}K+\partial_{3,i}K}$, defined on $\mathcal{S}$ by
    \begin{equation*}
        (T_{\partial_{1,i}K+\partial_{3,i}K}\phi)(x)=\int (\partial_{1,i}K+\partial_{3,i}K)(x,y,x-y)\phi(y)\ \mathrm{d}y,
    \end{equation*}
    extend to bounded operators $T_{\partial_{1,i}K+\partial_{3,i}K}:L^p\to L^p$. Moreover by Lemma \ref{weaksingularpvalue} we have for $f\in L^p$ that
    \begin{equation*}
        (T_{\partial_{1,i}K+\partial_{3,i}K}f)(x)=\lim_{\varepsilon\to 0}\int_{B^c(x;\varepsilon)}(\partial_{1,i}K+\partial_{3,i}K)(x,y,x-y)f(y)\ \mathrm{d}y
    \end{equation*}
    in $L^p$. 

    We claim that $\partial_iT_Kf=T_{\partial_{1,i}K+\partial_{3,i}K}f$ for $f\in L^p$. To show this, let $\phi\in C^\infty_0$ and suppose $f\in C^\infty_0$. Then by the $L^p$-continuity of the functional $\langle \phi,\bullet\rangle$ we have
    \begin{equation*}
        \begin{split}
            &\int\phi(x)\left(\int (\partial_{1,i}K+\partial_{3,i}K)(x,y,x-y)f(y)\ \mathrm{d}y \right)\ \mathrm{d}x\\
            &=\lim_{\varepsilon\to 0}\int\phi(x)\left(\int_{B^c(x;\varepsilon)} (\partial_{1,i}K+\partial_{3,i}K)(x,y,x-y)f(y)\ \mathrm{d}y \right)\ \mathrm{d}x.
        \end{split}
    \end{equation*}
    By Lemma $\ref{movingreg}$ we have
    \begin{equation*}
        \begin{split}
            \int_{B^c(x;\varepsilon)} (\partial_{1,i}K+\partial_{3,i}K)(x,y,x-y)f(y)\ \mathrm{d}y&=\partial_i\int_{B^c(x;\varepsilon)} K(x,y,x-y)f(y)\ \mathrm{d}y\\
            &\quad +\int_{\partial B(x;\varepsilon)}K(x,y,x-y)f(y)\frac{(x-y)^i}{\vert x-y\vert}\ \mathrm{d}S(y).
        \end{split}
    \end{equation*}
    Since
    \begin{equation*}
        \left\vert\int_{\partial B(x;\varepsilon)}K(x,y,x-y)f(y)\frac{(x-y)^i}{\vert x-y\vert}\ \mathrm{d}S(y)\right\vert\lesssim \frac{\varepsilon^{n-1}}{\varepsilon^{n-s}}=\varepsilon^{s-1}\to 0
    \end{equation*}
    as $\varepsilon\to  0$, we may use for $\varepsilon$ small enough the integrability of the dominating function
    \begin{equation*}
        x\mapsto \vert \phi(x)\vert
    \end{equation*}
    to apply the dominated convergence theorem and conclude that
    \begin{equation*}
        \lim_{\varepsilon\to 0}\int \phi(x)\left(\int_{\partial B(x;\varepsilon)}K(x,y,x-y)f(y)\frac{(x-y)^i}{\vert x-y\vert}\ \mathrm{d}S(y)\right)\ \mathrm{d}x=0.
    \end{equation*}
    With this we have
    \begin{equation*}
        \begin{split}
            &\lim_{\varepsilon\to 0}\int\phi(x)\left(\int_{B^c(x;\varepsilon)} (\partial_{1,i}K+\partial_{3,i}K)(x,y,x-y)f(y)\ \mathrm{d}y \right)\ \mathrm{d}x \\
            &=\lim_{\varepsilon\to 0}\int\phi(x)\partial_i\left(\int_{B^c(x;\varepsilon)} K(x,y,x-y)f(y)\ \mathrm{d}y \right)\ \mathrm{d}x \\
            &=-\lim_{\varepsilon\to 0}\int(\partial_i\phi)(x)\left(\int_{B^c(x;\varepsilon)} K(x,y,x-y)f(y)\ \mathrm{d}y \right)\ \mathrm{d}x \\
            &=-\int(\partial_i\phi)(x)\left(\int K(x,y,x-y)f(y)\ \mathrm{d}y \right)\ \mathrm{d}x,
        \end{split}
    \end{equation*}
    where the last equality follows from the $L^p$-continuity of the functional $\langle\partial_i\phi,\bullet\rangle$ and the $L^p$-convergence of the limit
    \begin{equation*}
        \lim_{\varepsilon\to 0}\int _{B^c(x;\varepsilon)}K(x,y,x-y)f(y)\ \mathrm{d}y. \qedhere
    \end{equation*}
\end{proof}

For kernels with singular derivatives we have the following.

\begin{lemma}\label{stronglysingulardifferentiability}
    Suppose $K:\mathbb{R}^n\times\mathbb{R}^n\times(\mathbb{R}^n\setminus\{0\})\to\mathbb{R}$ is
    \begin{itemize}
        \item of class $C^1$ and compactly supported in the first two arguments and
        \item smooth, even and positively $(-(n-1))$-homogeneous in the last argument.
    \end{itemize}
    Then the associated integral operator, defined on $\mathcal{S}$ by the absolutely convergent integral
        \begin{equation*}
            (T_K\phi)(x)=\int K(x,y,x-y)\phi(y)\ \mathrm{d}y,
        \end{equation*}
        extends uniquely to a bounded operator $T_K:L^p\to W^{1,p}$.
\end{lemma}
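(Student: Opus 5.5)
We follow the scheme of Lemma \ref{weaklysingulardifferentiability}: identify $\partial_i T_K f$ with an explicit $L^p$-bounded operator applied to $f$, and check the identification weakly. Differentiating the kernel gives
\[
\partial_{x^i}\bigl(K(x,y,x-y)\bigr)=(\partial_{1,i}K)(x,y,x-y)+(\partial_{3,i}K)(x,y,x-y).
\]
The first term is $-(n-1)$-homogeneous in the last argument, so $T_{\partial_{1,i}K}$ is bounded on $L^p$ by Lemma \ref{weaksingularpvalue} with $s=1$. The second term is $(-n)$-homogeneous, and since $K$ is even in $z$, the derivative $\partial_{3,i}K$ is odd in $z$. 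It is also $C^1$ in the second argument and compactly supported in the first two. Lemma \ref{Singularintlplimit} therefore applies: the principal value operator $T_{\partial_{3,i}K}f=\lim_{\varepsilon\to0}\int_{B^c(x;\varepsilon)}\partial_{3,i}K(x,y,x-y)f(y)\,\mathrm{d}y$ exists in $L^p$ and is bounded on $L^p$. We also know $T_K$ is bounded on $L^p$ by Lemma \ref{weaksingularpvalue}.

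The claim is $\partial_iT_Kf=T_{\partial_{1,i}K}f+T_{\partial_{3,i}K}f$ in the distributional sense. First take $f\in C^\infty_0$ and $\phi\in C^\infty_0$. Writing $T_K^\varepsilon$ for the truncated operator, Lemma \ref{movingreg} gives
\[
\partial_i T_K^\varepsilon f(x)=\int_{B^c(x;\varepsilon)}(\partial_{1,i}K+\partial_{3,i}K)(x,y,x-y)f(y)\,\mathrm{d}y-\int_{\partial B(x;\varepsilon)}K(x,y,x-y)f(y)\frac{(x-y)^i}{|x-y|}\,\mathrm{d}S(y).
\]
Lemma \ref{movingreg} needs a $C^1$ function, but the integrand is $C^1$ away from the diagonal, which suffices after a cutoff near the diagonal.

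The main obstacle is the boundary term. Its size is $O(\varepsilon^{n-1}\cdot\varepsilon^{-(n-1)})=O(1)$, so it does not vanish trivially as it did in Lemma \ref{weaklysingulardifferentiability}. Here evenness is essential. Substitute $y=x+\varepsilon\omega$ and use homogeneity, $K(x,y,-\varepsilon\omega)=\varepsilon^{-(n-1)}K(x,y,\omega)$ by evenness. The boundary term becomes
\[
-\int_{S^{n-1}}K(x,x+\varepsilon\omega,\omega)f(x+\varepsilon\omega)(-\omega^i)\,\mathrm{d}S(\omega).
\]
As $\varepsilon\to0$ this tends to $\int_{S^{n-1}}K(x,x,\omega)\omega^i\,\mathrm{d}S(\omega)\,f(x)$. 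That integral vanishes because $\omega\mapsto K(x,x,\omega)\omega^i$ is odd. Continuity of $K$ and $f$ makes the difference from the limit $O(\varepsilon)$, uniformly on compact sets, and compact support in $x$ then makes the convergence uniform. Dominated convergence against $\phi$ kills the term.

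Next, pair the truncated identity with $\phi$ and integrate by parts to get $\langle\phi,\partial_iT^\varepsilon_Kf\rangle=-\langle\partial_i\phi,T^\varepsilon_Kf\rangle$. Let $\varepsilon\to0$ using the $L^p$ convergence from Lemmas \ref{weaksingularpvalue} and \ref{Singularintlplimit}. This gives $\partial_iT_Kf=(T_{\partial_{1,i}K}+T_{\partial_{3,i}K})f$ for $f\in C^\infty_0$, hence $\|T_Kf\|_{W^{1,p}}\lesssim\|f\|_{L^p}$. Finally, by density of $C^\infty_0$ (or $\mathcal S$) in $L^p$ and the $L^p$-boundedness of all three operators, the identity and the estimate extend to all $f\in L^p$. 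The extension is unique because $C^\infty_0$ is dense.
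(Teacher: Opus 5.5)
Your proposal is correct and is essentially the paper's proof. It uses the same splitting of $\partial_i T_K$ into the weakly singular part $T_{\partial_{1,i}K}$ (Lemma \ref{weaksingularpvalue}) and the odd principal-value part $T_{\partial_{3,i}K}$ (Lemma \ref{Singularintlplimit}), the same weak identification via Lemma \ref{movingreg} as in Lemma \ref{weaklysingulardifferentiability}, and the same use of evenness to kill the $O(1)$ boundary term. You rescale to the unit sphere where the paper Taylor expands $K(x,x-z,z)f(x-z)$ at $z=0$ with an $O(|z|^{2-n})$ remainder, which is the same computation, and your cutoff near the diagonal before invoking Lemma \ref{movingreg} makes explicit a point the paper leaves implicit.
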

\begin{proof}
   The $L^p$-boundedness follows by Schur's test as in the proof of Lemma \ref{weaklysingulardifferentiability}.

   To show differentiability, we claim that
   \begin{equation*}
       (\partial_iT_Kf)(x)=\lim_{\varepsilon\to 0}\int_{B^c(x;\varepsilon)}(\partial_{1,i}K+\partial_{3,i}K)(x,y,x-y)f(y)\ \mathrm{d}y,
   \end{equation*}
   where the limit exists in $L^p$, since the kernels $\partial_{1,i}K$ satisfy the conditions of Lemma \ref{weaksingularpvalue} and the kernels $\partial_{3,i}K$ satisfy the conditions of Lemma \ref{Singularintlplimit}. Let $\phi\in C^\infty_0$ and suppose $f\in C^\infty_0$. The argument to show that
   \begin{equation*}
       \int\phi(x)\left(\lim_{\varepsilon\to 0}\int_{B^c(x;\varepsilon)}(\partial_{1,i}K+\partial_{3,i}K)(x,y,x-y)f(y)\ \mathrm{d}y\right)\ \mathrm{d}x \\
       =-\int(\partial_i\phi)(x)\left(\int K(x,y,x-y)f(y)\ \mathrm{d}y\right)\ \mathrm{d}x
   \end{equation*}
   is the same used in the proof of Lemma \ref{weaklysingulardifferentiability}, except that the vanishing of the boundary term has to be argued differently. To do this, we use the smoothness assumption on the kernel and take a Taylor expansion at $z=0$ to write
   \begin{equation*}
       \begin{split}
           &\int_{\partial B(0;\varepsilon)}K(x,x-z,z)f(x-z)\frac{z^i}{\vert z\vert}\ \mathrm{d}S(z) \\
           &=\int_{\partial B(0;\varepsilon)}K(x,x,z)f(x)\frac{z^i}{\vert z\vert}\ \mathrm{d}S(z)+\int_{\partial B(0;\varepsilon)}r(x,z)\frac{z^i}{\vert z\vert}\ \mathrm{d}S(z),
       \end{split}
   \end{equation*}
   where the function $r$ satisfies the estimate
   \begin{equation*}
       \vert r(x,z)\vert=\mathcal{O}\left(\frac{1}{\vert z\vert^{n-2}}\right)
   \end{equation*}
   as $z\to 0$. Since the kernel $K$ is even with respect to the last argument, we have
   \begin{equation*}
       \int_{\partial B(0;\varepsilon)}K(x,x,z)f(x)\frac{z^i}{\vert z\vert}\ \mathrm{d}S(z)=0.
   \end{equation*}
   Since 
   \begin{equation*}
       \left\vert\int_{\partial B(0;\varepsilon)}r(x,z)\frac{z^i}{\vert z\vert}\ \mathrm{d}S(z)\right\vert\lesssim \frac{\varepsilon^{n-1}}{\varepsilon^{n-2}}=\varepsilon,
   \end{equation*}
   we can now use the argument in the proof of Lemma \ref{weaklysingulardifferentiability} to conclude the vanishing of the boundary term.
\end{proof}

We can now prove the base case $l=0$ for Proposition \ref{Highermapping}.

\begin{lemma}\label{perustapaus}
    Suppose $K:\mathbb{R}^n\times\mathbb{R}^n\times(\mathbb{R}^n\setminus\{0\})\to \mathbb{R}$ is
    \begin{itemize}
        \item of class $C^{2}$ and compactly supported in the first two arguments and
        \item smooth, odd and positively $-(n-2)$-homogeneous in the last argument.
    \end{itemize}
    Then the associated integral operator $T_K$, defined on $\mathcal{S}$ by the uniformly convergent integral
    \begin{equation*}
        (T_K\phi)(x)=\int K(x,y,x-y)\phi(y)\ \mathrm{d}y,
    \end{equation*}
    extends uniquely to a bounded operator $T_K:L^p\to W^{2,p}$.
\end{lemma}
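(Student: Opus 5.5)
The plan is to differentiate once and reduce to Lemma \ref{stronglysingulardifferentiability}. For $\phi\in\mathcal{S}$ the kernel $K(x,y,x-y)$ is only $O(|x-y|^{-(n-2)})$ singular, so its first derivatives are integrable. I would first show, exactly as in the proof of Lemma \ref{weaklysingulardifferentiability} with $s=2>1$, that
\[
\partial_i T_K f = T_{\partial_{1,i}K+\partial_{3,i}K} f, \qquad f\in L^p,
\]
where the boundary term from Lemma \ref{movingreg} is $O(\varepsilon^{n-1}\varepsilon^{-(n-2)})=O(\varepsilon)$ and vanishes in the limit. Boundedness $T_K:L^p\to L^p$ comes from Lemma \ref{weaksingularpvalue}. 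So $T_K:L^p\to W^{1,p}$, and it remains to show that each $T_{K_i}$, with $K_i:=\partial_{1,i}K+\partial_{3,i}K$, maps $L^p\to W^{1,p}$.

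Next I would split $K_i$ into its two pieces.
\begin{itemize}
\item The piece $\partial_{3,i}K$ is $C^2$ (hence $C^1$) and compactly supported in the first two arguments. In the last argument it is smooth and positively $-(n-1)$-homogeneous, and it is even because $K$ is odd in $z$. So it satisfies exactly the hypotheses of Lemma \ref{stronglysingulardifferentiability}, which gives $T_{\partial_{3,i}K}:L^p\to W^{1,p}$.
\item The piece $\partial_{1,i}K$ is $C^1$ in the first two arguments, compactly supported, and smooth and $-(n-2)$-homogeneous in $z$. So Lemma \ref{weaklysingulardifferentiability} with $s=2$ gives $T_{\partial_{1,i}K}:L^p\to W^{1,p}$. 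This is the step where the $C^2$ regularity in the first two arguments is consumed: one derivative was used in the first argument, and one more is needed for Lemma \ref{weaklysingulardifferentiability}.
\end{itemize}
Combining the two pieces, $\partial_i T_K f\in W^{1,p}$ with norm $\lesssim\|f\|_{L^p}$ for every $i$, so $T_K f\in W^{2,p}$. Uniqueness of the extension follows from density of $\mathcal{S}$ in $L^p$.

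The main obstacle is making the identity $\partial_i T_K f=T_{K_i}f$ rigorous in the distributional sense for general $f\in L^p$, and in particular checking that $T_{K_i}$ is given by the same truncated limits that the two lemmas use. I would first prove the identity for $f\in C^\infty_0$ by the pairing argument with $\phi\in C^\infty_0$. I would then pass to $f\in L^p$ using the $L^p$ bounds on $T_K$ and $T_{K_i}$ from Lemmas \ref{weaksingularpvalue} and \ref{Singularintlplimit}, together with closedness of the weak derivative. A minor point to verify is that Lemma \ref{stronglysingulardifferentiability} applies to kernels that are merely even, with no sign or cancellation assumption beyond the vanishing of the boundary term; that lemma's proof indeed uses only evenness.
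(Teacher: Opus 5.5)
Your proposal is correct and follows the paper's proof essentially verbatim. One application of Lemma \ref{weaklysingulardifferentiability} (with $s=2$) gives $\partial_j T_K f = T_{\partial_{1,j}K+\partial_{3,j}K}f$; then $\partial_{1,j}K$ is handled again by Lemma \ref{weaklysingulardifferentiability}, and the even, $-(n-1)$-homogeneous piece $\partial_{3,j}K$ by Lemma \ref{stronglysingulardifferentiability}, while your density and weak-derivative remarks just make explicit what the paper leaves implicit.
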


\begin{proof}
    By Lemma \ref{weaklysingulardifferentiability} we have $T_K:L^p\to W^{1,p}$ with
    \begin{equation*}
        (\partial_jT_Kf)(x)=\int (\partial_{1,j}K+\partial_{3,j}K)(x,y,x-y)f(y)\ \mathrm{d}y.
    \end{equation*}
    Since the kernels $\partial_{1,j}K$ satisfy the conditions of Lemma \ref{weaklysingulardifferentiability} and the kernels $\partial_{3,j}K$ satisfy the conditions of Lemma \ref{stronglysingulardifferentiability}, we have that $\partial_jT_K:L^p\to W^{1,p}$ with
    \begin{equation*}
        \begin{split}
            (\partial_i\partial_jT_Kf)(x)&=\int(\partial_{1,i}\partial_{1,j}K+\partial_{3,i}\partial_{1,j}K)(x,y,x-y)f(y)\ \mathrm{d}y \\
            &\quad +\int(\partial_{1,i}\partial_{3,j}K+\partial_{3,i}\partial_{3,j}K)(x,y,x-y)f(y)\ \mathrm{d}y,
        \end{split}
    \end{equation*}
    giving us the base case.
\end{proof}

Next we prove a useful commutator result used to treat higher order derivatives.

\begin{lemma}\label{commu}
    Let \(K(x,y,z)\) be a kernel as in Lemma \ref{perustapaus} except that $K$ is $C^3$ in $x$ and $y$, and let \(f\in W^{1,p}\). Then $[\partial_j,T_K]:L^p\to W^{2,p}$ and
    \begin{equation*}
        ([\partial_j,T_K]f)(x)=\int
        \left(\partial_{1,j}K(x,y,x-y)+\partial_{2,j}K(x,y,x-y)\right)f(y)\mathrm{d}y.
    \end{equation*}
\end{lemma}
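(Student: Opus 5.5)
The plan is to first derive the kernel formula for smooth compactly supported $f$, and then extend it by density using the mapping properties already established. Fix $f\in C^\infty_0$. For such $f$, Lemma \ref{perustapaus} (or Lemma \ref{weaklysingulardifferentiability}) gives
\[
(\partial_jT_Kf)(x)=\int (\partial_{1,j}K+\partial_{3,j}K)(x,y,x-y)f(y)\,\mathrm{d}y .
\]
The term involving $\partial_{3,j}K$ will be rewritten as a derivative in $y$. Since $\partial_{3,j}K(x,y,x-y)=-\partial_{y_j}[K(x,y,x-y)]+\partial_{2,j}K(x,y,x-y)$, we integrate by parts in $y$ over $B^c(x;\varepsilon)$. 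This produces three pieces: $\int K(x,y,x-y)\partial_jf(y)\,\mathrm{d}y$, the term $\int\partial_{2,j}K(x,y,x-y)f(y)\,\mathrm{d}y$, and a boundary term over $\partial B(x;\varepsilon)$.

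The boundary term is bounded by $\varepsilon^{n-1}\cdot\varepsilon^{-(n-2)}=\varepsilon\to0$, because $K$ is $-(n-2)$-homogeneous in its last argument. All the integrals converge absolutely, since $\partial_{3,j}K$ is $-(n-1)$-homogeneous and hence integrable. Passing to the limit gives
\[
\partial_jT_Kf=T_K\partial_jf+T_{\partial_{1,j}K+\partial_{2,j}K}f,
\]
which is exactly the stated formula for $[\partial_j,T_K]f$ on $C^\infty_0$.

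Next we identify the mapping property of the kernel $K_j:=\partial_{1,j}K+\partial_{2,j}K$. It is $C^2$ and compactly supported in $(x,y)$, because $K$ is $C^3$ there. Differentiation in the first two arguments does not affect the last argument, so $K_j$ is still smooth, odd and positively $-(n-2)$-homogeneous in $z$. Lemma \ref{perustapaus} therefore applies and gives $T_{K_j}:L^p\to W^{2,p}$. In particular $[\partial_j,T_K]$, which is a priori defined on $C^\infty_0$, extends to a bounded operator $L^p\to W^{2,p}$.

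Finally we pass to $f\in W^{1,p}$ by density. Choose $f_m\in C^\infty_0$ with $f_m\to f$ in $W^{1,p}$. Then $T_K\partial_jf_m\to T_K\partial_jf$ and $T_{K_j}f_m\to T_{K_j}f$ in $L^p$, by Lemma \ref{perustapaus}. Also $T_Kf_m\to T_Kf$ in $W^{2,p}$, so $\partial_jT_Kf_m\to\partial_jT_Kf$ in $L^p$. Taking limits in the identity above yields the formula for $[\partial_j,T_K]f$ with $f\in W^{1,p}$.

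The main point requiring care is the integration by parts step, which needs the vanishing boundary term and absolute integrability near the diagonal. Homogeneity of degree $-(n-2)$ gives both comfortably. So I expect no real obstacle beyond checking that $K_j$ retains the oddness and homogeneity needed for Lemma \ref{perustapaus}, which it does because the $z$-dependence is untouched.
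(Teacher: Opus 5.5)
Your proposal is correct and follows essentially the same route as the paper: the derivative formula from Lemma \ref{weaklysingulardifferentiability}, an integration by parts over $B^c(x;\varepsilon)$ whose boundary term vanishes by the $-(n-2)$-homogeneity, and Lemma \ref{perustapaus} applied to the kernel $\partial_{1,j}K+\partial_{2,j}K$. The one difference is that you first prove the identity for $f\in C^\infty_0$ and then extend it to $W^{1,p}$ by density; the paper instead integrates by parts directly for $f\in W^{1,p}$, so your version is slightly more careful at that step.
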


\begin{proof}
    By Lemma \ref{weaklysingulardifferentiability} we have that
    \begin{equation*}
        \partial_jT_Kf(x)=\int(\partial_{1,j}K+\partial_{3,j}K)(x,y,x-y)f(y)\mathrm{d}y.
    \end{equation*}

    Next we use integration by parts to obtain
    \begin{equation*}
    \begin{split}
        T_K(\partial_jf)(x)&=\lim_{\varepsilon\to0}\int_{B^c(x,\varepsilon)} K(x,y,x-y)\partial_{y^j}f(y)\mathrm{d}y\\
        &=-\lim_{\varepsilon\to0}\int_{B^c(x,\varepsilon)}\partial_{y^j}(K(x,y,x-y))f(y)\mathrm{d}y+\lim_{\varepsilon\to0}\int_{\partial B(x,\varepsilon)}K(x,y,x-y)f(y)\frac{y^j-x^j}{|y-x|}dS(y)\\
        &=-\lim_{\varepsilon\to0}\int_{B^c(x,\varepsilon)} (\partial_{2,j}K-\partial_{3,j}K)(x,y,x-y)f(y)\mathrm{d}y+\lim_{\varepsilon\to0}\int_{\partial B(x,\varepsilon)}K(x,y,x-y)f(y)\frac{y^j-x^j}{|y-x|}dS(y)\\
        &=-\int(\partial_{2,j}K-\partial_{3,j}K)(x,y,x-y)f(y)\mathrm{d}y,\\
    \end{split}
    \end{equation*}
    where we used the fact that boundary term is zero, which was already shown in the proof of Lemma \ref{weaklysingulardifferentiability}.

    Hence
    \begin{equation*}
    \begin{split}
        ([\partial_j,T_K]f)(x)&=\int(\partial_{1,j}K+\partial_{3,j}K+\partial_{2,j}K-\partial_{3,j}K)(x,y,x-y)f(y)\mathrm{d}y.\\
    \end{split}
    \end{equation*}
    Since both \(\partial_{1,j}K\) and \(\partial_{2,j}K\) satisfy properties of Lemma \ref{perustapaus}, we have that \([\partial_j,T_K]:L^p\to W^{2,p}\).
\end{proof}

For higher derivatives we have the following Lemma.

\begin{lemma}\label{higherderivativeformula}
    Suppose $K$ satisfies the conditions of Lemma \ref{perustapaus} and is in addition of class $C^k$ with respect to the first two arguments for some $k\ge 3$. Then for $f\in W^{k,p}$ we have $T_Kf\in W^{k,p}$, and for all $\alpha$ with $\vert \alpha\vert\leq k$ we have 
    \begin{equation*}
        \partial^\alpha T_Kf=\sum_{\beta\leq \alpha}\sum_{\eta\leq \beta}\binom{\alpha}{\beta}\binom{\beta}{\eta}T_{\partial_1^\eta \partial_2^{\beta-\eta}K}\partial^{\alpha-\beta}f.
    \end{equation*}
\end{lemma}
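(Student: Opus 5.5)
The plan is to argue by induction on $|\alpha|$, using the commutator identity of Lemma \ref{commu} as the one-derivative step, and then to upgrade from smooth $f$ to $f\in W^{k,p}$ by density.

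\textbf{Base case and induction step.} For $|\alpha|=0$ the formula is trivial. Suppose it holds for some $\alpha$ with $|\alpha|<k$, and let $\alpha'=\alpha+e_j$. Every kernel $\partial_1^\eta\partial_2^{\beta-\eta}K$ with $|\beta|\le|\alpha|\le k-1$ is still odd, smooth and positively $-(n-2)$-homogeneous in the last argument. It is also of class $C^{k-|\beta|}\supset C^{1}$ in the first two arguments, and compactly supported there. I then differentiate each summand $T_{K'}\partial^{\alpha-\beta}f$, with $K'=\partial_1^\eta\partial_2^{\beta-\eta}K$, as
\begin{equation*}
\partial_j T_{K'}\partial^{\alpha-\beta} f = T_{K'}\partial^{\alpha-\beta+e_j}f + T_{\partial_{1,j}K'+\partial_{2,j}K'}\partial^{\alpha-\beta}f.
\end{equation*}
This is exactly the computation in the proof of Lemma \ref{commu}, which needs $\partial^{\alpha-\beta}f\in W^{1,p}$ and $K'\in C^1$ in $(x,y)$. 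The first of these holds since $|\alpha-\beta|\le k-1$. Note that the computation itself (Lemma \ref{weaklysingulardifferentiability} together with integration by parts and the vanishing boundary term) only uses one derivative of $K'$; the $C^3$ hypothesis in Lemma \ref{commu} is used only for the $W^{2,p}$ conclusion, which we do not need here.

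\textbf{Recombining the sums.} Summing these identities with the binomial weights and reindexing should reproduce the formula for $\alpha'$. The derivatives landing on $f$ raise $\alpha-\beta$, and the derivatives landing on the kernel raise $\beta$, with the split between $\eta$ and $\beta-\eta$ accounting for $\partial_1$ versus $\partial_2$. The bookkeeping is Pascal's rule applied twice: $\binom{\alpha'}{\beta}=\binom{\alpha}{\beta}+\binom{\alpha}{\beta-e_j}$, and the analogous rule for $\binom{\beta}{\eta}$. This is the Leibniz rule for the ``product'' of kernel and function.

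\textbf{Membership in $W^{k,p}$.} Each term on the right-hand side is $T_{K''}$ applied to an $L^p$ function, where $K''$ is continuous in $(x,y)$ and $-(n-2)$-homogeneous in the last argument. By Lemma \ref{weaksingularpvalue} each term therefore lies in $L^p$, with bound $\lesssim\|f\|_{W^{k,p}}$. Hence all weak derivatives of $T_Kf$ up to order $k$ are in $L^p$.

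\textbf{Main obstacle.} The expected main obstacle is rigor in the differentiation step when $\partial^{\alpha-\beta}f$ is only $W^{1,p}$ and the kernel is only $C^1$. To handle it, I would first prove the identity for $f\in C_0^\infty$, where Lemma \ref{commu}'s integration by parts is classical. I would then extend it by density, using the $L^p$ bounds above: both sides of the identity are continuous from $W^{k,p}$ to $L^p$ in the distributional sense, so the identity passes to the limit.
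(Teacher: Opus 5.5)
Your proposal is correct and follows essentially the same route as the paper: induction on $|\alpha|$ using the first-order identity $\partial_j T_{K'}g=T_{K'}\partial_jg+T_{\partial_{1,j}K'+\partial_{2,j}K'}g$ from Lemma \ref{commu} together with Leibniz-type bookkeeping, which the paper packages through the intermediate formula $\partial^\alpha T_Kf=\int\partial_x^\alpha\bigl(K(x,x-z,z)f(x-z)\bigr)\,\mathrm{d}z$ rather than applying Pascal's rule directly. Your two refinements tighten points the paper glosses over: you observe that the identity only needs $C^1$ regularity of the differentiated kernels (the paper invokes Lemma \ref{commu}, stated for $C^3$ kernels, even for $\partial_1^\eta\partial_2^{\beta-\eta}K$ with $|\beta|$ close to $k$), and you prove the identity for $f\in C_0^\infty$ before passing to $W^{k,p}$ by density.
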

\begin{proof}
We begin by showing that
\begin{equation*}
    \begin{split}
        \partial^\beta_x (K(x,x-z,z))=\sum_{\eta\leq \beta}\binom{\beta}{\eta}(\partial_1^\eta\partial_2^{\beta-\eta}K)(x,x-z,z)
    \end{split}
\end{equation*}
for $\beta$ with $\vert\beta\vert\leq k$.

If $\vert\beta\vert=1$, then we get
\begin{equation*}
    \partial_{x^i}(K(x,x-z,z))=(\partial_{1,i}K+\partial_{2,i}K)(x,x-z,z),
\end{equation*}
giving us the base case.

Suppose then that the claim holds for all $\beta$ with $\vert\beta\vert\leq k-1$. Then
\begin{equation*}
    \begin{split}
    \partial_{x^i}\partial_x^\beta (K(x,x-z,z)) &=\sum_{\eta\leq \beta}\binom{\beta}{\eta}(\partial_1^{\eta+e_i}\partial_2^{\beta-\eta}K+\partial_1^\eta\partial_2^{\beta+e_i-\eta}K)(x,x-z,z) \\
        &=\sum_{j=0}^{\beta_i}\sum_{\substack{\eta\leq \beta \\
        \eta_i=j}}\binom{\beta_i}{j}\prod_{k\neq i}\binom{\beta_k}{\eta_k}(\partial_1^{\eta+e_i}\partial_2^{\beta-\eta}K)(x,x-z,z) \\
        &\quad +\sum_{j=0}^{\beta_i}\sum_{\substack{\eta\leq \beta \\
        \eta_i=j}}\binom{\beta_i}{j}\prod_{k\neq i}\binom{\beta_k}{\eta_k}(\partial_1^{\eta}\partial_2^{\beta+e_i-\eta}K)(x,x-z,z).
    \end{split}
\end{equation*}

We then change the summation index in the first term and separate the $j=0$ summand in the second term to write the above as
\begin{equation*}
    \begin{split}
        &\sum_{j=1}^{\beta_i+1}\sum_{\substack{\eta\leq \beta \\
        \eta_i=j}}\binom{\beta_i}{j-1}\prod_{k\neq i}\binom{\beta_k}{\eta_k}(\partial_1^{\eta-e_i+e_i}\partial_2^{\beta-(\eta-e_i)}K)(x,x-z,z) \\
        &\quad +\sum_{\substack{\eta\leq \beta \\
        \eta_i=0}}\binom{\beta_i}{0}\prod_{k\neq i}\binom{\beta_k}{\eta_k}(\partial_1^{\eta}\partial_2^{\beta+e_i-\eta}K)(x,x-z,z) \\
        &\quad +\sum_{j=1}^{\beta_i}\sum_{\substack{\eta\leq \beta \\
        \eta_i=j}}\binom{\beta_i}{j}\prod_{k\neq i}\binom{\beta_k}{\eta_k}(\partial_1^{\eta}\partial_2^{\beta+e_i-\eta}K)(x,x-z,z) \\
        &=\sum_{j=1}^{\beta_i}\sum_{\substack{\eta\leq \beta \\
        \eta_i=j}}\left(\binom{\beta_i}{j}+\binom{\beta_i}{j-1}\right)\prod_{k\neq i}\binom{\beta_k}{\eta_k}(\partial_1^{\eta}\partial_2^{\beta+e_i-\eta}K)(x,x-z,z) \\
        &\quad +\sum_{\substack{\eta\leq \beta \\
        \eta_i=0}}\binom{\beta_i}{0}\prod_{k\neq i}\binom{\beta_k}{\eta_k}(\partial_1^{\eta}\partial_2^{\beta+e_i-\eta}K)(x,x-z,z) \\
        &\quad +\sum_{\substack{\eta\leq \beta \\
        \eta_i=\beta_i+1}}\binom{\beta_i}{\beta_i}\prod_{k\neq i}\binom{\beta_k}{\eta_k}(\partial_1^{\eta}\partial_2^{\beta+e_i-\eta}K)(x,x-z,z).
    \end{split}
\end{equation*}
Finally, we use the identity
\begin{equation*}
    \binom{\beta_i}{j}+\binom{\beta_i}{j-1}=\binom{\beta_i+1}{j}
\end{equation*}
to write the above as
\begin{equation*}
    \begin{split}
    &\sum_{j=1}^{\beta_i}\sum_{\substack{\eta\leq \beta \\
        \eta_i=j}}\binom{\beta_i+1}{j}\prod_{k\neq i}\binom{\beta_k}{\eta_k}(\partial_1^{\eta}\partial_2^{\beta+e_i-\eta}K)(x,x-z,z) \\
        &\quad +\sum_{\substack{\eta\leq \beta \\
        \eta_i=0}}\binom{\beta_i+1}{0}\prod_{k\neq i}\binom{\beta_k}{\eta_k}(\partial_1^{\eta}\partial_2^{\beta+e_i-\eta}K)(x,x-z,z) \\
        &\quad +\sum_{\substack{\eta\leq \beta \\
        \eta_i=\beta_i+1}}\binom{\beta_i+1}{\beta_i+1}\prod_{k\neq i}\binom{\beta_k}{\eta_k}(\partial_1^{\eta}\partial_2^{\beta+e_i-\eta}K)(x,x-z,z) \\
        &=\sum_{j=0}^{\beta_i+1}\sum_{\substack{\eta\leq \beta \\
        \eta_i=j}}\binom{\beta_i+1}{j}\prod_{k\neq i}\binom{\beta_k}{\eta_k}(\partial_1^{\eta}\partial_2^{\beta+e_i-\eta}K)(x,x-z,z) \\
        &=\sum_{\eta\leq \beta+e_i}\binom{\beta+e_i}{\eta}(\partial_1^\eta\partial_2^{\beta-\eta}K)(x,x-z,z),
    \end{split}
\end{equation*}
giving us the claim by induction.

We then move on to show that 
    \begin{equation*}
        (\partial^\alpha T_Kf)(x)=\int \partial_x^\alpha(K(x,x-z,z)f(x-z))\ \mathrm{d}z
    \end{equation*}
    for all $\alpha$ with $\vert \alpha\vert\leq k$.
    
    By Lemma \ref{commu} and the change of variables to $z=x-y$ we have
    \begin{equation*}
        \begin{split}
            (\partial_iT_Kf)(x)&=\int\left(\left(\partial_{1,i}K+\partial_{2,i}K\right)(x,y,x-y)f(y)+K(x,y,x-y)\partial_if(y)\right)\ \mathrm{d}y \\
            &=\int \partial_{x^i}(K(x,x-z,z)f(x-z))\ \mathrm{d}z,
        \end{split}
    \end{equation*}
    giving us the case $\vert \alpha\vert=1$.

    Suppose then that $k\ge 2$ and that the claim holds for $\alpha$ with $\vert\alpha\vert=k-1$. The kernels
    \begin{equation*}
        \partial_1^\eta\partial_2^{\beta-\eta}K
    \end{equation*}
    and the functions $\partial^{\alpha-\beta}f$ satisfy the conditions of Lemma \ref{commu} for each $\beta\leq\alpha$ and $\eta\leq \beta$. Hence by the Leibniz rule and Lemma \ref{commu} we get
    \begin{equation*}
        \begin{split}
            (\partial_i\partial^\alpha T_Kf)&=\sum_{\beta\leq \alpha}\sum_{\eta\leq \beta}\binom{\alpha}{\beta}\binom{\beta}{\eta}\partial_iT_{\partial_1^\eta\partial_2^{\beta-\eta}K}\partial^{\alpha-\beta}f \\
            &=\sum_{\eta\leq \beta}\binom{\alpha}{\beta}\binom{\beta}{\eta}\int\partial_{x^i}(\partial_1^\eta\partial_2^{\beta-\eta}K(x,x-z,z)\partial^{\alpha-\beta}f(x-z))\ \mathrm{d}z \\
            &=\int\partial_x^{\alpha+e_i}(K(x,x-z,z)f(x-z))\ \mathrm{d}z,
        \end{split}
    \end{equation*}
    giving us the claim by induction.

    By expanding the derivatives out we then get that
    \begin{equation*}
        \begin{split}
            (\partial^\alpha T_Kf)(x)&=\sum_{\beta\leq \alpha}\sum_{\eta\leq \beta}\binom{\alpha}{\beta}\binom{\beta}{\eta}(T_{\partial_1^\eta\partial_2^{\beta-\eta}K}\partial^{\alpha-\beta}f)(x)
        \end{split}
    \end{equation*}
    for $\alpha$ with $\vert\alpha\vert\leq k$.
\end{proof}

We are now in a position to prove Proposition \ref{Highermapping}.

\begin{proof}[Proof of Proposition \ref{Highermapping}]
    The base case $l=0$ was shown in Lemma \ref{perustapaus}.
    
    Let then $\vert\alpha\vert=l$ and $f\in W^{l,p}$. Then by Lemma \ref{higherderivativeformula} we have
    \begin{equation*}
        \partial^\alpha T_Kf=\sum_{\beta\leq \alpha}\sum_{\eta\leq \beta}\binom{\alpha}{\beta}\binom{\beta}{\eta}T_{\partial_1^\eta\partial_2^{\beta-\eta}K}\partial^{\alpha-\beta}f.
    \end{equation*}
    Since all the kernels $\partial_1^\eta\partial_2^{\beta-\eta}K$ are as in the base case, we have that $\partial^\alpha T_K f\in W^{2,p}$, giving us the claim.
\end{proof}

We then get the following regularity result for the remainder term $R$ in Lemma \ref{decomposition}.

\begin{corollary}\label{rmappingproperties}
    Suppose $g\in C^k$ with $k\ge 5$. Then $R:W^{l,p}\to W^{l+2,p}$ for $l\leq k-5$.
\end{corollary}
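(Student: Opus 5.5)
The plan is to combine Lemma \ref{arrankerneli} with Proposition \ref{Highermapping}. By Lemma \ref{decomposition} and the remark after Lemma \ref{arrankerneli}, the Schwartz kernel of $R$ is $\psi'(y)K(x,y,x-y)$. Here $K$ is the kernel produced by Lemma \ref{arrankerneli}, and $\psi'$ is a smooth cutoff with $\psi'=1$ on $M$. So $R=T_{\tilde K}$ with $\tilde K(x,y,z)=\psi'(y)K(x,y,z)$, and the whole task is to check that $\tilde K$ satisfies the hypotheses of Proposition \ref{Highermapping} with the right amount of regularity.

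\emph{Step 1 (structural properties).} Lemma \ref{arrankerneli} gives that $K$ is compactly supported in $x$ and is smooth away from the origin, odd, and positively $-(n-2)$-homogeneous in $z$. Multiplying by $\psi'(y)$ keeps all of these properties and also makes the kernel compactly supported in $y$. Thus $\tilde K$ has the compact support in the first two arguments required by Proposition \ref{Highermapping}.

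\emph{Step 2 (regularity count).} Lemma \ref{arrankerneli} gives that $K$, hence $\tilde K$, is $C^{k-3}$ in $(x,y)$ when $g\in C^k$, since $a,G\in C^{k-2}$ and one further derivative is lost in the Taylor remainder. Proposition \ref{Highermapping} requires class $C^{l+2}$ in the first two arguments in order to conclude $T_{\tilde K}:W^{l,p}\to W^{l+2,p}$. The condition $l+2\le k-3$ is exactly $l\le k-5$, which needs $k\ge 5$ for the case $l=0$. Applying Proposition \ref{Highermapping} with its parameter $k$ replaced by $l$ then gives $R:W^{l,p}\to W^{l+2,p}$, bounded, for all $0\le l\le k-5$.

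\emph{Step 3 (uniform smoothness in $z$).} The main obstacle is a point the lemmas use implicitly: the derivatives $\partial_1^\eta\partial_2^{\beta}\partial_3^\gamma \tilde K$ must be jointly continuous and homogeneous, with bounds uniform on $\mathrm{supp}\times\mathbb{S}^{n-1}$. This is what makes Schur's test and the Calder\'on--Zygmund argument in Lemmas \ref{weaksingularpvalue}--\ref{perustapaus} valid after the $(x,y)$-derivatives are taken. I would verify it from the explicit formula
\[
K(x,y,z)=-\Big\langle \int_0^1 \nabla_2\tilde K_0(x,x+t(y-x),z)\,dt,\,z\Big\rangle ,
\]
where $\tilde K_0(x,y,z)=\psi(x)\psi(y)a(x,y)|g(x)|^{1/2}(G_{ij}(x,y)z^iz^j)^{-(n-1)/2}$. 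Because $G$ is positive definite uniformly on compact sets, the map $z\mapsto (G_{ij}z^iz^j)^{-(n-1)/2}$ is real-analytic on $z\ne0$. All mixed derivatives of it up to order $k-3$ in $(x,y)$ and of any order in $z$ are therefore continuous, with the stated homogeneity. This justifies using each kernel $\partial_1^\eta\partial_2^{\beta-\eta}\tilde K$ as in the base case of Proposition \ref{Highermapping}.
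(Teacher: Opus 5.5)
Your proposal is the paper's proof. Lemma~\ref{arrankerneli}, with the cutoff $\psi'(y)$ from the remark following it, gives a kernel that is $C^{k-3}$ and compactly supported in $(x,y)$ and smooth, odd and positively $-(n-2)$-homogeneous in $z$, so Proposition~\ref{Highermapping} applies exactly when $l+2\le k-3$, i.e.\ $l\le k-5$.

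The one point to soften is the claim in your Step~3 that $G$ is positive definite uniformly on compact sets. The paper never establishes this away from the diagonal: only $G(x,x)=g(x)$ and $G_{ij}(x,y)(x-y)^i(x-y)^j=d(x,y)^2$ are available. The argument does not need it, though. In Lemmas~\ref{weaksingularpvalue}--\ref{higherderivativeformula} the kernels are only evaluated at points $(x,y',z)$ with $y'=x$ or with $z$ a positive multiple of $x-y'$, and there $G_{ij}(x,y')z^iz^j=|z|^2d(x,y')^2/|x-y'|^2\gtrsim|z|^2$.
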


\begin{proof}
    If $g\in C^k$ with $k\ge 5$, then Lemma \ref{arrankerneli} lets us apply Proposition \ref{Highermapping} to conclude the claim.
\end{proof}

\subsection{Mapping properties for the remainder terms: Hölder estimates}

We move on to establish the Hölder regularity required for Theorem \ref{thm_main2}.

\begin{lemma}\label{alpha_mapping}
    Suppose $K:\mathbb{R}^n\times\mathbb{R}^n\times(\mathbb{R}^n\setminus\{0\})\to\mathbb{R}$ is
    \begin{itemize}
        \item of class \(C^\alpha\) and compactly supported in the first two arguments and
        \item \(K\) is smooth and positively $-(n-s)$-homogeneous in the last argument, where $s>0$.
    \end{itemize}
    Then \(T_K\) maps \(C^\alpha_0\) to \(C^\alpha\).
\end{lemma}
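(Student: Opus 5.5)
The plan is to write $T_K f(x) = \int K(x,x-z,z) f(x-z)\,\mathrm{d}z$ after the change of variables $y = x-z$. We then estimate $|T_Kf(x)-T_Kf(x')|$ directly, splitting the integral into a region near the singularity and a region away from it. Throughout we use the bound $|K(x,y,z)| \lesssim \chi(x)\chi(y)|z|^{-(n-s)}$ for some $\chi\in C^\infty_0$, which holds because $K$ is continuous and compactly supported in the first two arguments and homogeneous in the last. We also use the Hölder bound
\[
|K(x,y,z)-K(x',y',z)| \lesssim (|x-x'|^\alpha + |y-y'|^\alpha)\,|z|^{-(n-s)},
\]
which follows from homogeneity by restricting $z$ to the unit sphere, where the $C^\alpha$ seminorm in $(x,y)$ is uniformly bounded. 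Boundedness of $T_Kf$, i.e.\ $\|T_Kf\|_{L^\infty}\lesssim \|f\|_{L^\infty}$, is immediate from local integrability of $|z|^{-(n-s)}$ and the compact support.

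For the Hölder seminorm, fix $x,x'$ with $h=|x-x'|$; the case $h \geq 1$ is handled by the sup bound. We write
\[
K(x,x-z,z)f(x-z)-K(x',x'-z,z)f(x'-z)
\]
as
\[
\bigl[K(x,x-z,z)-K(x',x'-z,z)\bigr]f(x-z) + K(x',x'-z,z)\bigl[f(x-z)-f(x'-z)\bigr].
\]
The first bracket is bounded by $h^\alpha |z|^{-(n-s)}\|f\|_\infty$, times an indicator of $z$ in a fixed compact set, since $K$ vanishes unless $x-z$ lies in a compact set and $x$ is restricted to the compact $x$-support. Its integral is therefore $\lesssim h^\alpha \|f\|_{L^\infty}$. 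The second term is bounded by $|z|^{-(n-s)}\|f\|_{C^\alpha} h^\alpha$ on a compact $z$-region, which is again integrable. No cancellation is needed, because $s>0$ makes the kernel locally integrable.

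The key steps, in order, are as follows. First we record the two kernel bounds above; this is where smoothness in the last argument together with homogeneity gives uniform control. Second we perform the change of variables, so that the singularity sits at $z=0$ independently of $x$. Third we apply the difference decomposition and integrate over a compact $z$-set. Finally we combine these to get $\|T_Kf\|_{C^\alpha}\lesssim \|f\|_{C^\alpha}$, with $C^\alpha_0$ ensuring that $f$ is bounded and globally Hölder.

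The main obstacle is mild: the $z$-integration region must be uniformly bounded. This holds because $K$ is compactly supported in its first argument, so when $x$ or $x'$ lies outside $\pi_1(\mathrm{supp} K)$ the corresponding term vanishes. Likewise $K(x,x-z,z)\neq 0$ forces $x-z$ into a compact set, hence $|z|\le C$. If instead one wanted to allow $s\le 0$, a near/far splitting at scale $|z|\sim h$ and cancellation would be needed, but with $s>0$ the direct estimate suffices.
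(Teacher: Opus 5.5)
Your proposal is correct and follows essentially the same route as the paper: you change variables to $z=x-y$, split the difference into a term carrying $f(x-z)-f(x'-z)$ and a term carrying the difference of $K$, use homogeneity to reduce the Hölder bound on $K$ to $\hat z$ on the unit sphere, and integrate the locally integrable weight $|z|^{-(n-s)}$ over a bounded $z$-region. You are somewhat more explicit than the paper about the sup bound, the case $h\ge 1$, and why the $z$-support is bounded, but the argument is the same.
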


\begin{proof}
    Let \(f\in C^\alpha_0\). Now we can write
    \begin{equation*}
        T_Kf(x)=\int K(x,y,x-y)f(y)\ \mathrm{d}y=\int K(x,x-z,z)f(x-z)\ \mathrm{d}z.
    \end{equation*}
    Since \(K\) is compactly supported in the first two variables and $f$ is compactly supported, we have 
    \begin{equation*}
    \begin{split}
        |T_Kf(x)-T_Kf(y)|&=\left\vert\int\left(K(x,x-z,z)f(x-z)-K(y,y-z,z)f(y-z)\right)\ \mathrm{d}z\right\vert\\
        &\leq \int |K(x,x-z,z)|\left|f(x-z)-f(y-z)\right|\ \mathrm{d}z \\
        &\quad +\int |\chi(z)|\left|K(x,x-z,z)-K(y,y-z,z)\right||f(z)|\ \mathrm{d}z
    \end{split}
    \end{equation*}
    for some $\chi \in C^{\infty}_0$. For the first term we can use the fact that \(f\in C^\alpha\) and therefore
    \begin{equation*}
        \int |K(x,x-z,z)|\left|f(x-z)-f(y-z)\right|\ \mathrm{d}z\leq C\sup_x\int|K(x,x-z,z)|\ \mathrm{d}z|x-y|^\alpha.
    \end{equation*}
    For the second term we write \(K(x,x-z,z)=\frac{K(x,x-z,\hat{z})}{|z|^{n-s}}\). Then
    \begin{equation*}
    \begin{split}
            \left|K(x,x-z,z)-K(y,y-z,z)\right|&\leq \frac{1}{|z|^{n-s}}\left(\left|K(x,x-z,\hat{z})-K(x,y-z,\hat{z})\right|+\left|K(x,y-z,\hat{z})-K(y,y-z,\hat{z})\right|\right)\\
            &\lesssim \frac{1}{|z|^{n-s}}|x-y|^\alpha
    \end{split}
    \end{equation*}
    when $x, y, z$ are in a fixed compact set. Then, for $x, y$ in a compact set  
    \begin{equation*}
            \int |\chi(z)|\left|K(x,x-z,z)-K(y,y-z,z)\right||f(z)|\ \mathrm{d}z\\
            \lesssim |x-y|^\alpha \int\frac{|\chi(z)|}{|z|^{n-s}} \ \mathrm{d}z \lesssim|x-y|^\alpha. \qedhere
    \end{equation*}
\end{proof}

\begin{lemma}\label{höldermapping}
    Let \(g\in C^{k,\alpha}\) with \(k\geq5\) and let \(f\in C^\alpha\) satisfy \(If=0\). Then \(Rf\in C^{2,\alpha}\).
\end{lemma}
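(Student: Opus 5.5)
We show that every second derivative of $Rf$ is $C^\alpha$. The key structural fact, from Lemma \ref{arrankerneli}, is that $Rf=T_Kf$ with $K(x,y,z)$ of class $C^{k-3,\alpha}$ (at least $C^{2,\alpha}$ since $k\ge5$) in $(x,y)$, compactly supported there, and smooth, odd and $-(n-2)$-homogeneous in $z$. The function $f$ is supported in $M$, so it lies in $C^\alpha_0$.

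\textbf{Step 1: one derivative.} By Lemma \ref{weaklysingulardifferentiability},
\[
\partial_j T_Kf=T_{\partial_{1,j}K}f+T_{\partial_{3,j}K}f .
\]
\begin{itemize}
\item The kernel $\partial_{1,j}K$ is $C^{1,\alpha}$ in $(x,y)$ and $-(n-2)$-homogeneous in $z$.
\item The kernel $\partial_{3,j}K$ is $C^{2,\alpha}$ in $(x,y)$, even, and $-(n-1)$-homogeneous in $z$.
\end{itemize}

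\textbf{Step 2: second derivatives.} Differentiating once more, as in the proof of Lemma \ref{perustapaus} (via Lemmas \ref{weaklysingulardifferentiability} and \ref{stronglysingulardifferentiability}), $\partial_i\partial_jT_Kf$ is a sum of four terms:
\begin{itemize}
\item $T_{\partial_{1,i}\partial_{1,j}K}f$, $T_{\partial_{3,i}\partial_{1,j}K}f$ and $T_{\partial_{1,i}\partial_{3,j}K}f$. Their kernels are at least $C^\alpha$ in $(x,y)$ and homogeneous of degree $-(n-2)$ or $-(n-1)$ in $z$. Lemma \ref{alpha_mapping} (with $s=2$ or $s=1$) shows each of these lies in $C^\alpha$.
\item The principal-value term $\mathrm{p.v.}\,T_{\partial_{3,i}\partial_{3,j}K}f$. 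Its kernel $L=\partial_{3,i}\partial_{3,j}K$ is $-n$-homogeneous and odd in $z$, and $C^{2,\alpha}$ in $(x,y)$.
\end{itemize}

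\textbf{Step 3: the singular term (main obstacle).} Lemma \ref{alpha_mapping} does not apply to $L$, so I would freeze the $y$-variable by Taylor expansion, exactly as in the proof of Lemma \ref{Singularintlplimit}. Write $L(x,y,z)=L(x,x,z)+L_2(x,y,z)$.

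The remainder $L_2$ satisfies $|L_2|\lesssim|z|^{1-n}$. More precisely, it is an integral over $t\in[0,1]$ of $\langle\nabla_2L(x,x+t(y-x),z),y-x\rangle$, which has the form of Lemma \ref{alpha_mapping} with $s=1$ and $C^{1,\alpha}$ coefficients in $(x,y)$. That lemma gives $T_{L_2}f\in C^\alpha$. The $t$-integral needs some care, but the estimates in the proof of Lemma \ref{alpha_mapping} are uniform in $t$.

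It remains to treat the frozen part $\mathrm{p.v.}\int L(x,x,x-y)f(y)\,dy$. This is a Calder\'on–Zygmund operator with odd, smooth, $-n$-homogeneous kernel whose coefficients are $C^\alpha$ in $x$. I would prove its Hölder bound directly, via a standard Hölder estimate for such principal-value integrals:
\begin{itemize}
\item Cancellation (the kernel has zero mean on spheres) lets us replace $f(y)$ by $f(y)-f(x)$, which makes the integral absolutely convergent.
\item For the variation in the $x$-slot of the coefficient, split $|x-y|<2h$ and $|x-y|\ge2h$, where $h=|x-x'|$. On the near region use $|f(y)-f(x)|\lesssim|x-y|^\alpha$. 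On the far region use the gradient bound $|z|^{-n-1}$ of the kernel together with the $C^\alpha$ regularity of $f$.
\item The dependence of the coefficient on $x$ contributes $|x-x'|^\alpha\int|f(y)-f(x)||z|^{-n}\,dz$, which converges locally. The large-$|z|$ part is handled by the compact support.
\end{itemize}

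Combining Steps 2 and 3, all second derivatives of $Rf$ are $C^\alpha$ on compact sets. Compact support in $x$ then gives $Rf\in C^{2,\alpha}$. The hypothesis $If=0$ is not needed beyond $f\in C^\alpha_0$.
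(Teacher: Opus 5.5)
Your proof is correct, but it takes a different route from the paper. You differentiate $Rf=T_Kf$ twice directly. This leaves a strongly singular term $\mathrm{p.v.}\,T_{\partial_{3,i}\partial_{3,j}K}f$, so you must supply the classical H\"older estimate for Calder\'on--Zygmund operators with $x$-dependent odd kernels, which the paper never proves.

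The paper instead moves derivatives onto $f$ using the commutator identity of Lemma \ref{commu}: $\partial_jRf=R\partial_jf+[\partial_j,R]f$ and $\partial_i\partial_jRf=\partial_iR\partial_jf+[\partial_j,R]\partial_if+[\partial_i,[\partial_j,R]]f$. The first two terms lie in $W^{1,p}$ for every $p$ by the $L^p$--Sobolev results already proved, hence in $C^\alpha$ by Sobolev embedding. The double commutator has kernel $(\partial_{1,i}+\partial_{2,i})(\partial_{1,j}+\partial_{2,j})K$, which is still $-(n-2)$-homogeneous in $z$, so only the weakly singular Lemma \ref{alpha_mapping} is needed.

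The price is that $R\partial_jf\in W^{2,p}$ requires $\partial_jf\in L^p$. So the paper implicitly uses the a priori regularity $f\in W^{1,p}$ for large $p$ coming from Theorem \ref{thm_main1}, and that is where $If=0$ enters. In short, the paper recycles its $L^p$ machinery and avoids any H\"older theory for strongly singular integrals. Your argument instead gives the stronger statement that $R$ maps $C^\alpha_0$ into $C^{2,\alpha}$ without using $If=0$, at the cost of the Calder\'on--Zygmund H\"older bound.

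When you write Step 3 out in full, fix three small points:
\begin{itemize}
\item Define the remainder as $-\langle\int_0^1\nabla_2L(x,x+t(y-x),z)\,\mathrm{d}t,z\rangle$, so that it is genuinely $-(n-1)$-homogeneous in $z$.
\item Insert a cutoff in $y$, since the frozen kernel $L(x,x,z)$ is not compactly supported in $y$.
\item In the constant-coefficient H\"older estimate, you must also handle the term $(f(x)-f(x'))$ times the integral of $L(x',x',x'-y)$ over a region that is not a ball centred at $x'$. This integral is bounded uniformly only because the kernel is odd, and your near/far sketch currently omits it.
\end{itemize}
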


\begin{proof}
    The proof is based on Lemma \ref{commu} and Sobolev embedding. First we note that \(f\) is compactly supported in \(\mathbb{R}^n\) and therefore \(f\in L^p\) for any \(p\in(1,\infty)\). Hence \(Rf\in W^{2,p}\), where \(p>\frac{n}{2}\). By Lemma \ref{commu} we may write
    \begin{equation*}
        \partial_jRf=R\partial_jf+[\partial_j,R]f.
    \end{equation*}
    By Proposition \ref{Highermapping} and Lemma \ref{weaklysingulardifferentiability} we have that \(R\partial_jf\in W^{2,p}\) and \([\partial_j,R]f\in W^{1,p}\). By Sobolev embedding we have that \(\partial_jRf\in C^\alpha\). Therefore the first derivative of $Rf$ is continuous and compactly supported. For the second derivative we use again the commutator result to write
    \begin{equation*}
        \partial_i\partial_jRf=\partial_iR\partial_jf+\partial_i[\partial_j,R]f=\partial_iR\partial_jf+[\partial_j,R]\partial_if+[\partial_i,[\partial_j,R]]f.
    \end{equation*}
    Now \(\partial_iR\partial_jf, [\partial_j,R]\partial_if\in W^{1,p}\) and by Lemma \ref{alpha_mapping} we have that \([\partial_i,[\partial_j,R]]f\in C^\alpha\). Hence \(\partial_i\partial_jRf\in C^\alpha\), so the second derivative is also continuous and compactly supported. By \cite[Theorem 3.1.7]{zbMATH01950198} we have that \(Rf\in C^{2,\alpha}\).
\end{proof}

\appendix 

\section{Properties of Geodesics}\label{sect:propofgeodesics}

In this appendix we verify that certain basic properties of geodesics on smooth Riemannian manifolds (see e.g.\ \cite{lee2018introduction, petersen2006riemannian}) remain valid when the metric $g$ has low regularity.

Let $\Omega \subset \mathbb{R}^n$ be open and let $g = (g_{jk})$ be a $C^{1,1}$ Riemannian metric in $\Omega$, i.e.\ each $g_{jk}$ is a $C^{1,1}$ function in $\Omega$ with $g_{jk} = g_{kj}$ and $g_{jk}(x) v^j v^k \geq c |v|_{\mathrm{Eucl}}^2$ for some $c > 0$ uniformly over $x \in \Omega, v \in \mathbb{R}^n$.
By Cramer's formula, the inverse matrix $(g^{jk})$ is also $C^{1,1}$. 
The metric has Christoffel symbols 
\[
\Gamma_{jk}^l = \frac{1}{2} g^{lm}(\partial_j g_{km} + \partial_k g_{jm} - \partial_m g_{jk}) \in C^{0,1}(\Omega).
\]
For any $x \in \Omega$ and any $v \in T_x \Omega = \mathbb{R}^n$, we consider the geodesic $\gamma(t) = \gamma_{x,v}(t)$ solving the geodesic equation 
\[
\ddot{\gamma}^l(t) + \Gamma_{jk}^l(\gamma(t)) \dot{\gamma}^j(t) \dot{\gamma}^k(t) = 0, \qquad \gamma(0) = x, \ \ \dot{\gamma}(0) = v.
\]
More precisely, writing $z(t) = z_{x,v}(t) = (\gamma(t), \eta(t))$ where $\eta(t) = \dot{\gamma}_{x,v}(t)$,  the geodesic equation is equivalent with 
\[
\dot{z}(t) = H(z(t)), \qquad z(0) = (x,v),
\]
where $H(\gamma, \eta) = (\eta, -\Gamma_{jk}^l(\gamma) \eta^j \eta^k)$. Since $H$ is Lipschitz, the existence and uniqueness theorem for ODEs shows that there is a unique solution $z: I_{x,v} \to \Omega$ in some maximal open interval $I_{x,v}$ containing $0$. Moreover, if $K \subset T \Omega$ is a compact set, the lower bound for the maximal interval (see e.g.\ \cite[Section 1.2]{taylor2011partial}) shows that there is $\varepsilon = \varepsilon_K > 0$ and an open set $U = U_K$ containing $K$ such that $[-\varepsilon, \varepsilon] \subset I_{x,v}$ for $(x, v) \in U$.

Up to now we have only assumed that $g$ is $C^{1,1}$. Next we discuss the regularity of geodesics when $g$ is $C^k$ or $C^{k,\alpha}$ with $k \geq 2$ and $0 < \alpha < 1$. We begin with the following lemma.

\begin{lemma} \label{lemma_flow_regularity}
    Let $\Omega\subset\mathbb{R}^n$ be an open set and let $F:\Omega\to\mathbb{R}^n$ belong to $C^k(\Bar{\Omega})$ (resp.\ $C^{k,\alpha}(\Bar{\Omega})$) for some $k\ge 1$ and $0 < \alpha < 1$. Denote by $\Phi$ the flow map, defined by
    \begin{equation*}
        \partial_t\Phi(t,x) =F(\Phi(t,x)), \qquad \Phi(0,x) = x,
    \end{equation*}
    defined in a maximal flow domain $D \subset \mathbb{R} \times \Omega$ with $\{0\} \times \Omega \subset D$. Then $\Phi$ is $C^k$ (resp.\ $C^{k,\alpha}$) in $D$.
\end{lemma}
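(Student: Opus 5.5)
The plan is to run the classical induction on $k$ using the variational equation. I treat the $C^k$ and $C^{k,\alpha}$ cases in parallel.

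\emph{Base case $k=1$.} Since $F$ is $C^1$, hence locally Lipschitz, the standard ODE theorem on differentiable dependence on initial data (e.g.\ \cite[Section 1.3]{taylor2011partial}) gives $\Phi\in C^1(D)$. Moreover $Y(t,x)=D_x\Phi(t,x)$ solves the linear variational equation
\[
\partial_t Y = DF(\Phi(t,x))\,Y, \qquad Y(0,x)=I,
\]
and $\partial_t\Phi=F\circ\Phi$ is continuous.

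\emph{Hölder case with $k=1$.} Suppose $F\in C^{1,\alpha}$. On a compact subset of $D$ of the form $[0,T]\times K$, the map $(t,x)\mapsto DF(\Phi(t,x))$ is $C^\alpha$, since $DF\in C^\alpha$ and $\Phi$ is Lipschitz. Take two points $x,y$ and write $W=Y(t,x)-Y(t,y)$. Subtracting the two variational equations gives
\[
\partial_t W = DF(\Phi(t,x))\,W + \bigl(DF(\Phi(t,x))-DF(\Phi(t,y))\bigr)Y(t,y), \qquad W(0)=0 .
\]
The inhomogeneous term is $O(|x-y|^\alpha)$ uniformly in $t\in[0,T]$. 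Gronwall's inequality then gives $|W|\lesssim|x-y|^\alpha$, so $D_x\Phi$ is $C^\alpha$ in $x$ uniformly in $t$. In $t$, $D_x\Phi$ is Lipschitz because $\partial_t Y$ is bounded, and $\partial_t\Phi=F\circ\Phi\in C^\alpha$. Hence $\Phi\in C^{1,\alpha}$ locally in $D$.

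\emph{Induction step.} Assume the claim for $k-1\ge1$ and let $F\in C^k$ (resp.\ $C^{k,\alpha}$). Consider the augmented system for $Z=(\Phi,Y)$ on $\Omega\times\mathbb{R}^{n\times n}$:
\[
\partial_t Z = G(Z), \qquad G(\phi,Y) = \bigl(F(\phi),\,DF(\phi)Y\bigr), \qquad Z(0)=(x,I).
\]
The vector field $G$ is $C^{k-1}$ (resp.\ $C^{k-1,\alpha}$) and locally has the same Hölder norms, because it is linear in $Y$. The induction hypothesis applied to the flow of $G$, restricted to the initial slice $Y_0=I$, shows that $D_x\Phi=Y$ is $C^{k-1}$ (resp.\ $C^{k-1,\alpha}$). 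Also $\partial_t\Phi=F\circ\Phi$ has the same regularity: composing a $C^{k-1,\alpha}$ function with a $C^{k-1,1}$ map preserves $C^{k-1,\alpha}$. So every first derivative of $\Phi$ in $(t,x)$ lies in $C^{k-1}$ (resp.\ $C^{k-1,\alpha}$), and therefore $\Phi\in C^k$ (resp.\ $C^{k,\alpha}$).

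\emph{Main obstacle: bookkeeping.} Two points need care. First, the hypothesis is stated on $\bar\Omega$ with global norms, while $G$ lives on the unbounded set $\Omega\times\mathbb{R}^{n\times n}$. I would handle this by working on compact subsets of $D$, possibly multiplying $G$ by a cutoff in $Y$, and noting that the flow domains agree there. Second, the Hölder estimates are only local, so the conclusion is local Hölder regularity on $D$, which is what is meant.
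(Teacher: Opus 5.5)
Your proof is correct, but it is organized differently from the paper's.

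\textbf{The paper's route.} The paper takes the $C^k$ statement from \cite[Section 1.6]{taylor2011partial} as a black box and performs the Hölder upgrade in one step at top order. For $|\gamma|=k$ it writes the linear equation $\partial_t\partial_x^\gamma\Phi = DF(\Phi)\,\partial_x^\gamma\Phi+\beta_\gamma$. Here the forcing $\beta_\gamma$ is built from $\mathrm{d}^jF\circ\Phi$, $j\le k$, and from derivatives of $\Phi$ of order less than $k$. It observes that the coefficient and forcing are $C^\alpha$ in $x$ because $\Phi$ is Lipschitz, and estimates $\partial_x^\gamma\Phi(t,x)-\partial_x^\gamma\Phi(t,y)$. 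Time regularity is then handled separately via $\partial_t^{k+1}\Phi=R(F,\dots,\mathrm{d}^kF)\circ\Phi$.

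\textbf{Your route.} You carry out the Hölder estimate only once, at first order, and bootstrap through the augmented system for $(\Phi,D_x\Phi)$. The $C^k$ case is reproved along the way rather than cited. Higher-order Hölder regularity then follows from the single fact that all first derivatives in $(t,x)$ are $C^{k-1,\alpha}$. This includes mixed $t,x$ derivatives and joint regularity in $(t,x)$, which the paper's final step treats rather briefly.

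\textbf{Trade-offs.} Your route costs the bookkeeping you already flagged. The induction must run over all dimensions, since $G$ lives in dimension $n+n^2$. Also, $G$ is unbounded in $Y$, so you must localize to compact subsets of $D$ and a bounded ball in $Y$. That works because the $Y$-component exists as long as $\Phi$ does. The paper's route avoids both issues but needs the structure of the top-order variational equation.

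\textbf{Gronwall versus the explicit formula.} Your use of Gronwall on the difference $W$ is also more robust than the paper's explicit representation with $\exp\bigl(\int_0^t\tilde A(\tau,x)\,\mathrm{d}\tau\bigr)$. That formula is only a genuine solution formula when the matrices $\tilde A(\tau,x)$ commute in $\tau$. In general one needs the propagator of the linear system, or simply Gronwall as you do, and this yields the same $|x-y|^\alpha$ bound.
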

\begin{proof}
    If $F\in C^k(\Omega)$, then by \cite[Section 1.6]{taylor2011partial} the flow map $\Phi$ is $C^k$.
    
    Next assume that $F\in C^{k,\alpha}(\Omega)$. By the above $\Phi$ is $C^k$, and by the chain rule the functions $\partial_x^\gamma\Phi$ solve the ODEs
\begin{equation}\label{odeforderivatives}
    \partial_t\partial_x^\gamma\Phi(t,x)=F_{,j}(\Phi(t,x))\partial^\gamma\Phi^j(t,x)+\beta_\gamma(t,\Phi(t,x))=:A(t,\Phi(t,x))\partial_x^\gamma\Phi(t,x)+\beta(t,\Phi(t,x)),
\end{equation}
where the functions $A(t,\bullet)$ and $\beta(t,\bullet)$ are $C^\alpha$. 

Since the flow map is Lipschitz, by the uniqueness of solutions we have
\begin{equation*}
    \frac{\vert h(t,\Phi(t,x))-h(t,\Phi(t,y))\vert}{\vert x-y\vert^{\alpha}}\lesssim_t \frac{\vert h(t,\Phi(t,x))-h(t,\Phi(t,y))\vert}{\vert \Phi(t,x)-\Phi(t,y)\vert^{\alpha}}\leq \Vert h(t,\bullet)\Vert_{C^\alpha(\Omega)}
\end{equation*}
for all $h$ such that $h(t,\bullet)\in C^\alpha(\Omega)$. Hence the functions $x\mapsto A(t,\Phi(t,x))$ and $x\mapsto \beta(t,\Phi(t,x))$ are in $C^\alpha(\Omega)$. 

Denote $A(t,\Phi(t,x))=\Tilde{A}(t,x)$ and $\beta(t,\Phi(t,x))=\Tilde{\beta}(t,x)$. Since $\partial_x^{\gamma}\Phi(0,x)=:\partial_x^{\gamma}\Phi(0)$ is constant for $k\ge 1$, we can solve the equation \eqref{odeforderivatives} to get the expression
\begin{equation*}
    \begin{split}
        \partial_{x}^{\gamma}\Phi(t,x)-\partial_x^{\gamma}\Phi(t,y)&=\left(\exp\left(\int_0^t\Tilde{A}(\tau,x)\ \mathrm{d}\tau\right)-\exp\left(\int_0^t\Tilde{A}(\tau,y)\ \mathrm{d}\tau\right)\right)\partial_x^{\gamma}\Phi(0) \\
        &\quad +\int_0^t\exp\left(\int_0^{t-s}\Tilde{A}(\tau,x)\ \mathrm{d}\tau\right)\Tilde{\beta}(s,x)\ \mathrm{d}s-\int_0^t\exp\left(\int_0^{t-s}\Tilde{A}(\tau,y)\ \mathrm{d}\tau\right)\Tilde{\beta}(s,y)\ \mathrm{d}s \\
        &=\left(\exp\left(\int_0^t\Tilde{A}(\tau,x)\ \mathrm{d}\tau\right)-\exp\left(\int_0^t\Tilde{A}(\tau,y)\ \mathrm{d}\tau\right)\right)\partial_x^{\gamma}\Phi(0) \\
        &\quad +\int_0^t\left(\exp\left(\int_0^{t-s}\Tilde{A}(\tau,x)\ \mathrm{d}\tau\right)-\exp\left(\int_0^{s-t}\Tilde{A}(\tau,y)\ \mathrm{d}\tau\right)\right)\beta(s,x)\ \mathrm{d}s \\
        &\quad +\int_0^t\exp\left(\int_0^{t-s}\Tilde{A}(\tau,y)\ \mathrm{d}\tau\right)(\Tilde{\beta}(s,x)-\Tilde{\beta}(s,y))\ \mathrm{d}s.
    \end{split}
\end{equation*}
Now, since $\Tilde{A}(t,\bullet)$ and $\Tilde{\beta}(t,\bullet)$ are $C^\alpha$ and the exponential function is locally Lipschitz, we may estimate
\begin{equation*}
    \begin{split}
        \vert \partial_{x}^{\gamma}\Phi(t,x)-\partial_x^{\gamma}\Phi(t,y)\vert&\lesssim_t\sup_{\tau\in [0,t]}\left(\Vert A(\tau,\bullet)\Vert_{C^\alpha(\Omega)}+\Vert \beta(\tau,\bullet)\Vert_{C^\alpha(\Omega)}\right)\vert x-y\vert^\alpha.
    \end{split}
\end{equation*}
Thus $\Phi$ is $C^{k,\alpha}$ with respect to $x$.

Since 
\begin{equation*}
    \partial_t\Phi=F\circ \Phi
\end{equation*}
and $F$ is of class $C^k$, we see that $\Phi$ is of class $C^{k+1}$ in time. Using the ODE
\begin{equation*}
    \partial_t\Phi=F\circ\Phi
\end{equation*}
we have that
\begin{equation*}
    \begin{split}
        \partial_t^{k+1}\Phi=R(F,\mathrm{d}F,\dots,\mathrm{d}^kF)\circ\Phi,
    \end{split}
\end{equation*}
where $R$ is a polynomial. Suppose without loss of generality that $s<t$. Then
\begin{equation*}
    \begin{split}
        \vert \partial_t^k\Phi(t,x)-\partial_t^k\Phi(s,x)\vert&\leq \int_s^t\vert (R(F,\mathrm{d}F,\dots,\mathrm{d}^kF)\circ\Phi)(\tau,x)\vert\ \mathrm{d}\tau\lesssim \Vert F\Vert^N_{C^k(\Bar{\Omega})}\vert t-s\vert
    \end{split}
\end{equation*}
for some positive power $N\in\mathbb{N}$,
telling us that $\partial_t^k\Phi$ is Lipschitz in time. This lets us conclude that $\Phi\in C^{k,\alpha}(D)$.
\end{proof}

With this we can obtain

\begin{lemma} \label{lemma_geodesic_ckminusone}
Let $g \in C^k$ (resp.\ $g \in C^{k,\alpha}$) where $k \geq 2$ and $0 < \alpha < 1$. The set 
\[
S = \{ (x,v,t) \,:\, (x,v) \in T \Omega, \ t \in I_{x,v} \},
\]
is open, and the maps 
\[
(x,v,t) \mapsto \gamma_{x,v}(t), \qquad (x,v,t) \mapsto \dot{\gamma}_{x,v}(t)
\]
are $C^{k-1}$ (resp.\ $C^{k-1,\alpha}$) from $S$ to $\mathbb{R}^n$.
\end{lemma}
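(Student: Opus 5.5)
The plan is to reduce the statement to Lemma \ref{lemma_flow_regularity}, applied to the first-order system $\dot z = H(z)$ on $\Omega\times\mathbb{R}^n$, where $H(\gamma,\eta) = (\eta, -\Gamma^l_{jk}(\gamma)\eta^j\eta^k)$.

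First I will check the regularity of $H$. If $g\in C^k$ (resp.\ $C^{k,\alpha}$), then $g^{jk}$ has the same regularity by Cramer's rule, since the determinant is bounded below. Hence $\Gamma^l_{jk}\in C^{k-1}$ (resp.\ $C^{k-1,\alpha}$), being a product of $g^{lm}$ with first derivatives of $g$. The map $H$ is polynomial in $\eta$ with coefficients of this regularity, so $H\in C^{k-1}$ (resp.\ $C^{k-1,\alpha}$) on $\Omega\times\mathbb{R}^n$, with $k-1\ge 1$.

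Lemma \ref{lemma_flow_regularity} is stated for $F\in C^{k-1}(\bar\Omega)$ with global bounds, so I will apply it locally. I take an exhaustion of $\Omega\times\mathbb{R}^n$ by relatively compact open sets $V\Subset\Omega\times\mathbb{R}^n$. On each $V$ the field $H$ has finite $C^{k-1}$ (resp.\ $C^{k-1,\alpha}$) norm on $\bar V$. The flow of $H|_V$ agrees with the flow of $H$ on its own maximal domain $D_V$, by uniqueness. Since $D_V$ is open and increases to the full domain, regularity is local, and this gives the claimed regularity of $\Phi(t,(x,v)) = z_{x,v}(t)$ on the full maximal flow domain.

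The flow domain $D = \{(t,x,v) : t\in I_{x,v}\}$ is exactly $S$ up to reordering coordinates. Openness of $D$ is the standard fact that maximal flow domains of locally Lipschitz vector fields are open. This follows from continuous dependence on initial data together with the uniform lower bound on the existence interval over compact sets recalled before the lemma. Concretely, if $t_0\in I_{x_0,v_0}$, I cover the compact curve $\{z_{x_0,v_0}(s): s\in[0,t_0]\}$ by a compact set $K$ with uniform existence time $\varepsilon_K$. I then step along in increments smaller than $\varepsilon_K$, using continuous dependence at each step, so that nearby initial data exist up to time $t_0+\varepsilon_K/2$.

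Finally, $\gamma_{x,v}(t)$ and $\dot\gamma_{x,v}(t)$ are the two components of $\Phi$, so they inherit $C^{k-1}$ (resp.\ $C^{k-1,\alpha}$) regularity. I expect the only real obstacle to be the localisation: Lemma \ref{lemma_flow_regularity} assumes global bounds on $\bar\Omega$, while $H$ is unbounded in $\eta$. Restricting to relatively compact $V$, as above, should settle this routinely.
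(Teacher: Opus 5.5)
Your proposal is correct and follows essentially the same route as the paper. Both arguments apply Lemma \ref{lemma_flow_regularity} locally to the first-order system $\dot z = H(z)$, prove openness of $S$ by the standard stepping argument based on the uniform existence time over compact sets (the paper cites Petersen for this), and read off the regularity of $\gamma_{x,v}(t)$ and $\dot\gamma_{x,v}(t)$ as components of the flow. Your exhaustion by relatively compact sets $V$ is a more explicit version of the paper's localization to compact $K$ and short times, and you also spell out the check $H\in C^{k-1}$ (resp.\ $C^{k-1,\alpha}$), which the paper leaves implicit.
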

\begin{proof}
If $g \in C^k$ for $k \geq 2$, then it follows from Lemma \ref{lemma_flow_regularity} that given any compact $K \subset T \Omega$, the map $(x,v,t) \mapsto z_{x,v}(t)$ is $C^{k-1}$ jointly in $(x,v) \in U_{K}$ and $t \in (-\varepsilon_{K}, \varepsilon_{K})$.  Note that this means that both $\gamma_{x,v}(t)$ and $\dot{\gamma}_{x,v}(t)$ are $C^{k-1}$ in $(x,v,t)$.

It follows from the above that if $[a,b] \subset I_{x_0,v_0}$, then $[a,b] \subset I_{x,v}$ for $(x,v)$ near $(x_0,v_0)$ (see e.g.\ the argument in \cite[Lemma 9 in Section 5.2]{petersen2006riemannian}). Thus $S$ is an open set. Applying Lemma \ref{lemma_flow_regularity} again shows that $\gamma_{x,v}(t)$  and $\dot{\gamma}_{x,v}(t)$ are $C^{k-1}$ for $(x,v,t)$ in $S$.

Similarly, if $g \in C^{k,\alpha}$, the claim follows from Lemma \ref{lemma_flow_regularity}.
\end{proof}

The geodesic equation may be written in terms of the Levi-Civita connection as 
\[
D_t \dot{\gamma}(t) = 0,
\]
and taking the time derivative of $\langle \dot{\gamma}(t), \dot{\gamma}(t) \rangle$ shows that 
\[
|\dot{\gamma}_{x,v}(t)| = |v|.
\]
For any $x$, the domain of the exponential map is 
\[
D_x = \{ v \in \Omega \,:\, [0,1] \subset I_{x,v} \}.
\]
Uniqueness gives the scaling property
\[
\gamma_{x,\lambda v}(t) = \gamma_{x,v}(\lambda t), \qquad \lambda > 0.
\]
This implies that $D_x$ is a star-shaped open set, and the exponential map is defined by 
\[
\exp_x(v) = \gamma_{x,v}(1), \qquad v \in D_x.
\]
By Lemma \ref{lemma_geodesic_ckminusone}, the set $D = \cup_{x \in \Omega} D_x$ is an open subset of $T \Omega$ and the exponential map has the following regularity properties.

\begin{lemma}
If $g$ is $C^k$ (respectively $C^{k,\alpha}$) with $k \geq 2$ and $0 < \alpha < 1$, then the map 
\[
E: D \to \mathbb{R}^n, \ \ E(x,v) = \exp_x(v)
\]
is $C^{k-1}$ (respectively $C^{k-1,\alpha}$).
\end{lemma}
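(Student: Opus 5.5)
The plan is to realize $E$ as a composition of the geodesic flow map with the fixed time $t=1$ and read off the regularity from Lemma \ref{lemma_geodesic_ckminusone}. By definition $E(x,v)=\gamma_{x,v}(1)$ for $v\in D_x$, and $(x,v)\in D$ means $[0,1]\subset I_{x,v}$, so in particular $(x,v,1)\in S$. Thus $E=\Gamma\circ\iota$, where
\[
\iota: D\to S,\qquad \iota(x,v)=(x,v,1),
\]
and $\Gamma(x,v,t)=\gamma_{x,v}(t)$ is the map shown in Lemma \ref{lemma_geodesic_ckminusone} to be $C^{k-1}$ (resp.\ $C^{k-1,\alpha}$) on the open set $S$.

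The key steps, in order, are as follows. First, I will check that $D$ is open in $T\Omega$: this follows from the openness of $S$ together with the stability statement in the proof of Lemma \ref{lemma_geodesic_ckminusone}, namely that $[0,1]\subset I_{x_0,v_0}$ implies $[0,1]\subset I_{x,v}$ for all $(x,v)$ near $(x_0,v_0)$. Second, I note that $\iota$ is affine, hence smooth, and maps $D$ into $S$. Third, composing a $C^{k-1}$ map with a smooth affine map yields a $C^{k-1}$ map, since all derivatives of $E$ up to order $k-1$ are just restrictions of the $(x,v)$-derivatives of $\Gamma$ to the slice $t=1$.

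In the Hölder case the same restriction argument applies. The $(k-1)$-th derivatives of $E$ are the $(x,v)$-derivatives of $\Gamma$ evaluated at $t=1$, and a $C^\alpha$ bound on $S$ restricts to a $C^\alpha$ bound on the slice $\{t=1\}\cap S$, because $|\iota(x,v)-\iota(y,w)|=|(x,v)-(y,w)|$. So $E\in C^{k-1,\alpha}$, with the Hölder norms understood locally, i.e.\ on compact subsets of $D$, consistent with how Lemma \ref{lemma_geodesic_ckminusone} is stated.

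The only point requiring care is this local nature of the Hölder estimates. Lemma \ref{lemma_flow_regularity} gives constants depending on $t$ and on compact sets, so I will phrase the conclusion as $C^{k-1,\alpha}$ on each compact $K\subset D$. For this I will cover $\iota(K)$ by finitely many of the neighborhoods $U_K\times(-\varepsilon,\varepsilon)$ shifted along the flow, as in the proof of Lemma \ref{lemma_geodesic_ckminusone}. Beyond that the lemma is essentially a direct corollary.
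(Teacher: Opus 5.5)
Your proposal is correct and matches the paper. The paper gives no separate argument: it treats this lemma as an immediate consequence of Lemma \ref{lemma_geodesic_ckminusone}, restricting $(x,v,t)\mapsto\gamma_{x,v}(t)$ to the slice $t=1$, with openness of $D$ inherited from openness of $S$. Your final covering step is harmless but unnecessary, since Lemma \ref{lemma_geodesic_ckminusone} already gives the (local) regularity on all of $S$.
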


Next we state the Gauss lemma.

\begin{lemma}
Let $g$ be a $C^2$ Riemannian metric. Then 
\[
\langle d \exp_x|_{tv}(v), d \exp_x|_{tv}(w) \rangle = \langle v, w \rangle, \qquad tv \in D_x, \ w \in T_x \mathbb{R}^n.
\]
\end{lemma}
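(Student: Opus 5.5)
The plan is to follow the classical proof of the Gauss lemma via the first variation along the family of geodesics $t\mapsto \gamma_{x,v+sw}(t)$, checking at each step that only first derivatives of the Christoffel symbols, and only mixed second derivatives of the geodesic flow, are used. For $g\in C^2$, Lemma \ref{lemma_geodesic_ckminusone} gives that $(x,v,t)\mapsto\gamma_{x,v}(t)$ and $\dot\gamma_{x,v}(t)$ are $C^1$. Define the variation $\Gamma(s,t)=\exp_x(t(v+sw))=\gamma_{x,v+sw}(t)$ for $s$ small; the set $D_x$ is open and star shaped, so $\Gamma$ is defined on a neighbourhood of $\{0\}\times[0,t_0]$ whenever $t_0 v\in D_x$. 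Then $\partial_t\Gamma(0,t)=\dot\gamma_{x,v}(t)$, and $\partial_s\Gamma(0,t)=d\exp_x|_{tv}(tw)=:J(t)$. By the chain rule, $\langle d\exp_x|_{tv}(v),d\exp_x|_{tv}(w)\rangle=t^{-1}\langle\dot\gamma(t),J(t)\rangle$ for $t>0$, so the claim is equivalent to $\langle\dot\gamma(t),J(t)\rangle=t\langle v,w\rangle$.

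Next I will show that $\frac{d}{dt}\langle\dot\gamma,J\rangle=\langle v,w\rangle$. Here $\langle\dot\gamma,D_tJ\rangle+\langle D_t\dot\gamma,J\rangle$ with $D_t\dot\gamma=0$, and the key identity is the symmetry lemma $D_t\partial_s\Gamma=D_s\partial_t\Gamma$. Given that, $\langle\dot\gamma,D_s\partial_t\Gamma\rangle=\tfrac12\partial_s|\partial_t\Gamma|^2|_{s=0}=\tfrac12\partial_s|v+sw|^2_{g(x)}=\langle v,w\rangle$, using the conservation $|\dot\gamma_{x,u}(t)|=|u|$ established above. Since $J(0)=0$, integrating from $0$ gives the result.

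The main obstacle is justifying the symmetry lemma with limited regularity. In coordinates, $D_t\partial_s\Gamma^l=\partial_t\partial_s\Gamma^l+\Gamma^l_{jk}\partial_t\Gamma^j\partial_s\Gamma^k$, which is symmetric in $s,t$ because $\Gamma^l_{jk}$ is symmetric. What remains is to check that $\partial_t\partial_s\Gamma=\partial_s\partial_t\Gamma$ and that both exist and are continuous. Since $\partial_t\Gamma(s,t)=\dot\gamma_{x,v+sw}(t)$ is $C^1$ in $(s,t)$, the derivative $\partial_s\partial_t\Gamma$ exists and is continuous. Moreover $\partial_t\partial_t\Gamma=-\Gamma^l_{jk}(\Gamma)\partial_t\Gamma^j\partial_t\Gamma^k$ is $C^1$ in $s$ because $\Gamma^l_{jk}\in C^1$. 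Schwarz's theorem (in the version requiring only that $\partial_s\Gamma,\partial_t\Gamma$ and $\partial_s\partial_t\Gamma$ exist and are continuous) then gives existence and equality of $\partial_t\partial_s\Gamma$. Finally, the product rule $\frac{d}{dt}\langle X,Y\rangle=\langle D_tX,Y\rangle+\langle X,D_tY\rangle$ needs only $g\in C^1$ along $C^1$ curves, where it follows from metric compatibility of the Christoffel symbols. The same applies to the $s$-derivative used above. This completes the plan.
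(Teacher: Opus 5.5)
Your proposal is correct and follows the same route as the paper. Both arguments use the variation $\exp_x(t(v+sw))$, the symmetry $D_t\partial_s\Gamma = D_s\partial_t\Gamma$, conservation of speed, and $J(0)=0$ to integrate $\frac{d}{dt}\langle \dot\gamma, J\rangle = \langle v,w\rangle$. The only difference is how you show $J$ is $C^1$: you get existence and continuity of $\partial_t\partial_s\Gamma$ pointwise from the refined Schwarz theorem, using that $\partial_t\Gamma$ is $C^1$ in $(s,t)$, whereas the paper argues in the sense of distributions. Your version is valid and slightly more elementary.
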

\begin{proof}
Define $\Gamma(t,s) = \exp_x(tv + s tw)$ for $s$ small, so $\Gamma$ is $C^1$ and also $\partial_t \Gamma(t,s) = \dot{\gamma}_{x,v+sw}(t)$ is $C^1$ in $t$ and $s$. The quantity $\partial_s \Gamma(t,s)$ is only $C^0$. However, the symmetry property $\Gamma_{jk}^l = \Gamma_{kj}^l$ and a standard computation \cite[Lemma 6.3]{lee2018introduction} show that in the sense of distributions 
\[
D_t \partial_s \Gamma(t,s) = D_s \partial_t \Gamma(t,s)
\]
where the right hand side is in $C^0$. This proves that also $\partial_s \Gamma(t,0)$ is $C^1$ in $t$.

We have 
\[
t \langle d \exp_x|_{tv}(v), d \exp_x|_{tv}(w) \rangle = \langle \partial_t \Gamma(t,0), \partial_s \Gamma(t,0) \rangle.
\]
Using that $\Gamma(t,0)$ is a geodesic and invoking the symmetry property above, the $t$-derivative of the right hand side is 
\begin{align*}
\partial_t \langle \partial_t \Gamma(t,0), \partial_s \Gamma(t,0) \rangle &= \langle \partial_t \Gamma(t,0), D_t \partial_s \Gamma(t,0) \rangle \\
 &= \frac{1}{2} \partial_s \langle \partial_t \Gamma(t,s), \partial_t \Gamma(t,s) \rangle|_{s=0}.
\end{align*}
Now each curve $t \mapsto \Gamma(t,s)$ is a geodesic, hence has constant speed $|v+sw|$, so the expression above is equal to $\langle v, w \rangle$. Thus $\langle \partial_t \Gamma(t,0), \partial_s \Gamma(t,0) \rangle = t\langle v, w \rangle + a$ for some constant $a$, and evaluating  at $t=0$ gives $a=0$.
\end{proof}

The Gauss lemma allows us to prove that among exponential images of curves in $D_x$ with the same endpoints, radial geodesics are the shortest.

\begin{lemma} \label{prop_expx_domain_minimizing}
Let $g$ be a $C^2$ Riemannian metric, let $x \in \Omega$ and $w \in D_x$, let $\eta_0: [0,1] \to D_x$ be the curve $\eta_0(t) = tw$, and let $\eta: [0,1] \to D_x$ be any $C^1$ curve with $\eta(0) = 0$ and $\eta(1) = w$. Then 
\[
\int_0^1 |(\exp_x \circ \,\eta_0)'(t)| \,\mathrm{d}t \leq \int_0^1 |(\exp_x \circ \,\eta)'(t)| \,\mathrm{d}t
\]
with equality if and only if $\eta$ is a reparametrization of $\eta_0$.
\end{lemma}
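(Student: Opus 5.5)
The plan is the classical Gauss-lemma argument, adapted to the $C^1$ regularity of $\exp_x$ that is available when $g\in C^2$. By the lemma on the regularity of $E$, the map $\exp_x$ is $C^1$ on $D_x$. Hence $c=\exp_x\circ\eta$ is a $C^1$ curve and both length integrals make sense.

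\textbf{Curves avoiding the origin.} First treat the case $\eta(t)\neq 0$ for $t\in(0,1]$. There I write $\eta(t)=r(t)\theta(t)$ with $r(t)=|\eta(t)|_{g(x)}$ and $|\theta(t)|_{g(x)}=1$. Both $r$ and $\theta$ are $C^1$ on $(0,1]$ and continuous on $[0,1]$. On $(0,1]$,
\[
c'(t)=d\exp_x|_{\eta(t)}\bigl(r'(t)\theta(t)+r(t)\theta'(t)\bigr).
\]
Since $\theta'\perp\theta$ in $g(x)$, the Gauss lemma with $v=\theta$, $tv=r\theta$ and $w=\theta'$ gives the following. The vectors $d\exp_x|_{r\theta}(\theta)$ and $d\exp_x|_{r\theta}(\theta')$ are orthogonal, and the first has unit length. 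Here I rescale the lemma: it is stated as $\langle d\exp_x|_{tv}(v),d\exp_x|_{tv}(w)\rangle=\langle v,w\rangle$, so I take the unit vector $\theta$ and $t=r$. Consequently
\[
|c'(t)|\ge |r'(t)|\ge r'(t).
\]
Integrating over $[s,1]$ and letting $s\to0$ yields
\[
L(c)\ge r(1)-r(0)=|w|_{g(x)}=L(\exp_x\circ\eta_0),
\]
where the last equality holds because radial geodesics have constant speed $|w|$.

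\textbf{General curves.} For general $\eta$, let $t_0=\sup\{t:\eta(t)=0\}$. Then apply the previous argument on $[t_0,1]$; dropping the piece on $[0,t_0]$ only decreases the length.

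\textbf{Equality.} Equality forces two things on $(t_0,1]$: $d\exp_x|_{r\theta}(r\theta')=0$, and $r'\ge0$. The main obstacle is deducing $\theta'=0$ from the vanishing of $d\exp_x|_{r\theta}(r\theta')$. In the smooth setting one would use the absence of conjugate points. Here I plan to use the standing simplicity hypothesis of Definition \ref{def_ctwo_simple}, namely that $\exp_x$ is a diffeomorphism, so $d\exp_x$ is injective. If that hypothesis is not to be assumed at this point, I would instead restrict to the local situation where $d\exp_x$ is invertible, or argue via a Jacobi-field-type $C^1$ ODE for $\partial_s\Gamma$ as in the proof of the Gauss lemma.

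Granting injectivity, $\theta$ is constant on $(t_0,1]$. By continuity this constant is $\theta(1)=w/|w|$, and $r$ is monotone. Equality also forces the piece on $[0,t_0]$ to have zero length, so $\eta\equiv0$ there. Hence $\eta$ is a monotone reparametrization of $\eta_0$. The converse, that a reparametrization gives equality, is immediate from invariance of length under reparametrization.
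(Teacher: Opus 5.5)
Your argument is the paper's proof: write $\eta=r\omega$, use the Gauss lemma to get $|(\exp_x\circ\eta)'|\ge|\dot r|$, integrate, and cut at the last zero of $\eta$. The inequality part is fine as you wrote it. One small correction: $\theta$ need not be continuous at $t_0$, but only the continuity of $r$ is used.

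On the equality case you are more careful than the paper. The Gauss lemma gives only $d\exp_x|_{\eta(t)}(\omega)\perp d\exp_x|_{\eta(t)}(\dot\omega)$ and $|d\exp_x|_{\eta(t)}(\omega)|=1$. It does not give the paper's asserted $|d\exp_x|_{\eta(t)}(\dot\omega(t))|=1$. So passing from $d\exp_x|_{\eta(t)}(r\dot\omega)=0$ to $\dot\omega=0$ genuinely needs injectivity of $d\exp_x$ along $\eta$.

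Without that hypothesis the equality claim is false, so your Jacobi-field fallback cannot work. For a counterexample, take $\Omega$ to be a large round-sphere cap in stereographic coordinates, so that $D_x$ contains an arc of the circle $|v|=\pi$. Let $\eta$ be a $C^1$ curve that runs radially out to $\pi u_1$ and then along that arc to $w=\pi u_2$. Then $\exp_x\circ\eta$ has length $\pi=|w|$, yet $\eta$ is not a reparametrization of $\eta_0$.

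The right fix is your first option: add injectivity of $d\exp_x$ (e.g.\ $\exp_x$ a diffeomorphism) as a hypothesis. That is how the lemma is actually used in Lemmas \ref{lemma_expx_ball} and \ref{lemma_distance}. Your step ``$\eta\equiv 0$ on $[0,t_0]$'' is fine even without this hypothesis. There $\exp_x\circ\eta\equiv x$, and $0$ is isolated in $\exp_x^{-1}(x)$ because $d\exp_x|_0=I$, so connectedness of $\eta([0,t_0])$ forces $\eta\equiv 0$.
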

\begin{proof}
We may assume that $w \neq 0$ and $\eta(t) \neq 0$ for $0 < t \leq 1$ (if not, let $t_0$ be the last time with $\eta(t_0) = 0$ and replace $\eta$ by $\eta|_{[t_0,1]}$ rescaled to the interval $[0,1]$). We write $\eta(t) = r(t) \omega(t)$ where $r(t) = |\eta(t)|$ and $|\omega(t)| = 1$. Then for $t > 0$ one has 
\[
\dot{\eta}(t) = \dot{r}(t) \omega(t) + r(t) \dot{\omega}(t).
\]
The condition $|\omega(t)| = 1$ implies $\langle \omega(t), \dot{\omega}(t) \rangle = 0$. Using the Gauss lemma, we obtain that 
\begin{gather*}
\langle d \exp_x|_{\eta(t)} (\omega(t)), d \exp_x|_{\eta(t)} (\dot{\omega}(t)) \rangle = 0, \\
|d \exp_x|_{\eta(t)} (\omega(t))| = |d \exp_x|_{\eta(t)} (\dot{\omega}(t))| = 1.
\end{gather*}
Combining these facts gives that 
\[
|(\exp_x \circ \,\eta)'(t)|^2 = |d \exp_x|_{\eta(t)}(\dot{\eta}(t))|^2 \geq \dot{r}(t)^2.
\]
Thus the lengths satisfy 
\begin{align*}
\int_0^1 |(\exp_x \circ \,\eta)'(t)| \,\mathrm{d}t &\geq \int_0^1 |\dot{r}(t)| \geq r(1) - r(0) = |w| \\
 &= \int_0^1 |(\exp_x \circ \,\eta_0)'(t)| \,\mathrm{d}t.
\end{align*}
Equality holds if and only if $\dot{\omega}(t) = 0$ and $\dot{r}(t) \geq 0$, which corresponds to the case where $\eta$ is a reparametrization of $\eta_0$.
\end{proof}

As a consequence, we obtain that short geodesics are unique length minimizing curves.

\begin{lemma} \label{lemma_expx_ball}
Let $g$ be a $C^2$ Riemannian metric, let $x \in \Omega$, and suppose that 
\[
\exp_x: B(0,r) \to U
\]
is a bijective map, where $B(0,r) \subset T_x \Omega$ is the ball of radius $r$ and $U \subset \Omega$ is open. Then 
\[
U = \{ y \in \Omega \,:\, d(x,y) < r \}.
\]
Moreover, for any $v \in B(0,r)$ the curve $\gamma_{x,v}|_{[0,1]}$ is the unique (up to reparametrization) length minimizing curve from $x$ to $\exp_x(v)$ among all piecewise $C^1$ curves in $\Omega$.
\end{lemma}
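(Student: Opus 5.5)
The plan is the standard argument from the smooth case, using Lemma \ref{prop_expx_domain_minimizing} as the key comparison tool. Write $B = B(0,r)$. First I show $U \subset \{d(x,\cdot) < r\}$. For $y = \exp_x(v)$ with $|v| < r$, the radial curve $\gamma_{x,v}|_{[0,1]}$ has length $|v|$ by constant speed, so $d(x,y) \le |v| < r$.

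Next I show the reverse inclusion together with minimality. Fix $v \in B$ and let $c:[0,1] \to \Omega$ be any piecewise $C^1$ curve from $x$ to $y = \exp_x(v)$. I split into two cases.

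\emph{Case (a): $c$ stays in $U$.} Lift $c$ to $\eta = \exp_x^{-1} \circ c$. This needs $\exp_x^{-1}$ to be $C^1$ on $U$, i.e.\ $d\exp_x$ nonsingular on $B$, so that the inverse function theorem applies to the $C^1$ map $\exp_x$. I would argue this as follows.
\begin{itemize}
\item By the Gauss lemma, $d\exp_x|_{tv}$ is an isometry in the radial direction and preserves orthogonality to it.
\item This alone does not exclude a kernel in the tangential directions.
\item I would therefore use the hypothesis in the form it is applied: in the simple setting, $\exp_x$ is a diffeomorphism, so I would interpret ``bijective'' as a $C^1$ diffeomorphism onto $U$, or add this as an assumption.
\item Otherwise, bijectivity plus invariance of domain gives a homeomorphism. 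Lemma \ref{prop_expx_domain_minimizing} can then be run with $\eta$ merely continuous and piecewise $C^1$ where $d\exp_x$ is invertible; that adaptation would be the fallback.
\end{itemize}
Given a $C^1$ lift, Lemma \ref{prop_expx_domain_minimizing} (applied on each $C^1$ piece, which is routine) gives $L(c) \ge |v|$. Equality holds only if $\eta$ is a reparametrization of $t \mapsto tv$, i.e.\ $c$ is a reparametrization of $\gamma_{x,v}$.

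\emph{Case (b): $c$ leaves $U$.} Let $t_0$ be the first time with $c(t_0) \notin U$. For $\rho$ with $|v| < \rho < r$, the set $\exp_x(\{|w| \le \rho\})$ is compact in $U$, since $\exp_x$ is continuous. Hence $c$ crosses the ``sphere'' $\exp_x(\{|w| = \rho\})$ at some time $t_1 < t_0$ while remaining in $U$ before $t_1$. Applying case (a) to $c|_{[0,t_1]}$ gives $L(c) \ge \rho > |v|$, so such $c$ is strictly longer.

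Combining the two cases gives $d(x,y) = |v|$, together with uniqueness of the minimizer. Finally, if $d(x,y) < r$ for some $y \in \Omega$, I must show $y \in U$. Take a curve from $x$ to $y$ of length $< \rho < r$. If it left $U$, case (b) would force its length to be at least $\rho$, a contradiction. So the curve stays in $U$, and $y \in U$.

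The main obstacle is the regularity of the lift in case (a), namely the invertibility of $d\exp_x$ for a $C^2$ metric, where $\exp_x$ is only $C^1$. I would first try to derive it from the Gauss lemma and Jacobi-field-type ODE reasoning. If that fails, I would fall back on the diffeomorphism hypothesis from Definition \ref{def_ctwo_simple}, which is what applications require.
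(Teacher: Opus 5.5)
Your proposal is essentially the paper's proof. Curves staying in $U$ are lifted by $\exp_x^{-1}$ and compared using Lemma \ref{prop_expx_domain_minimizing}. A curve leaving $U$ is shown to have length at least $r$ before it exits; your sphere of radius $\rho<r$ is a cleaner version of the paper's step, which applies the lemma up to the exit time $t_0$, where the lift is not defined. The reverse inclusion then follows from the same estimate.

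The obstacle you flag is real, since a $C^1$ bijection need not have invertible differential. The paper does not resolve it either: it simply asserts that the inverse function theorem makes $\exp_x: B(0,r)\to U$ a $C^1$ diffeomorphism. Taking that property as part of the hypothesis therefore puts you on the same footing as the paper, and it holds where the lemma is used (small $r$ in Lemma \ref{lemma_distance}, where $d\exp_x$ is close to the identity). Your homeomorphism fallback, however, is not a routine adaptation. A merely continuous lift gives no control of $|\exp_x^{-1}(c(t))|$ at times when $c$ passes through critical values, so the length bound from Lemma \ref{prop_expx_domain_minimizing} does not carry over.
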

\begin{proof}
Since $\exp_x$ is $C^1$, the inverse function theorem shows that $\exp_x: B(0,r) \to U$ is a $C^1$ diffeomorphism. Fix $v \in B(0,r)$, let $y = \exp_x(v)$, let $\gamma = \gamma_{x,v}|_{[0,1]}$ be the radial geodesic from $x$ to $y$, and let $\eta: [0,1] \to \Omega$ be any piecewise $C^1$ curve from $x$ to $y$.

If $\eta$ is contained in $U$, then by the diffeomorphism property $\eta$ is the exponential image of a curve in the domain of $\exp_x$. By Lemma \ref{prop_expx_domain_minimizing}, the length of $\gamma$ (which is equal to $|v|$) is less than or equal to the length of $\eta$, with equality if and only if $\eta$ is a reparametrization of $\gamma$. On the other hand, if $\eta$ leaves $U$, let $a \in [0,1]$ be the largest time with $\eta(a) = x$, and let $t_0 \in (a,1]$ be the smallest time with $\eta(t_0) \notin U$. By Lemma \ref{prop_expx_domain_minimizing} $\eta|_{[a,t_0]}$ has length $\geq r > |v|$, so $\eta$ is not shorter than $\gamma$. This proves that $\gamma$ is the unique (up to reparametrization) length minimizing curve from $x$ to $y$.

It is evident that $\exp_x(B(0,r)) \subset \{ y \in \Omega \,:\, d(x,y) < r \}$. For the converse, if $d(x,y) < r$, then there is a curve $\eta$ from $x$ to $y$ with length $< r$. The argument above shows that $\eta$ must be a radial geodesic, so the converse inclusion also holds.
\end{proof}

Finally, we give a representation of the Riemannian distance function that will be needed for our arguments.

\begin{lemma} \label{lemma_distance}
    Let $g \in C^{k,\alpha}(\Omega)$ where $k \geq 2$ and $0 < \alpha < 1$. In some neighborhood of the diagonal $\{ (x,x) \,:\, x \in \Omega \}$, we have 
\begin{equation*}
    d(x,y)^2=G_{pq}(x,y)(x-y)^p(x-y)^q
\end{equation*}
where $G\in C^{k-2,\alpha}$ and $G(x,x)=g(x)$.

Moreover, if $(\overline{\Omega},g)$ is simple with $g \in C^{k,\alpha}(\overline{\Omega})$, then $d(x,y)^2$ is in $C^{k-1,\alpha}(\overline{\Omega} \times \overline{\Omega})$ and the above representation holds with $G \in C^{k-2,\alpha}(\overline{\Omega} \times \overline{\Omega})$.
\end{lemma}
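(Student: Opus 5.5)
The plan is to write $d(x,y)^2$ through the inverse of the exponential map and then factor out the quadratic dependence on $x-y$ by Taylor's formula. Near the diagonal, define
\[
\Phi(x,v) = (x, \exp_x(v)).
\]
By the regularity lemma for $E$, $\Phi$ is $C^{k-1,\alpha}$. Its differential at $(x,0)$ is block triangular with identity diagonal blocks, since $d\exp_x|_0 = \mathrm{Id}$. The inverse function theorem, in its $C^{k-1,\alpha}$ version, then gives a local inverse $(x,y)\mapsto (x, V(x,y))$ with $V \in C^{k-1,\alpha}$ on a neighborhood of the diagonal. A compactness argument in $x$ makes this neighborhood uniform on compact sets.

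For $y$ close to $x$, the map $\exp_x$ is a bijection from a small ball $B(0,r)$ onto its image. Lemma \ref{lemma_expx_ball} then gives
\[
d(x,y) = |V(x,y)|_{g(x)}, \qquad d(x,y)^2 = g_{pq}(x)V^p(x,y)V^q(x,y).
\]
Since $V(x,x)=0$, Taylor's formula in $y$ gives
\[
V(x,y) = A(x,y)(y-x), \qquad A(x,y) = \int_0^1 \partial_2 V(x, x+t(y-x))\,\mathrm{d}t .
\]
This $A$ lies in $C^{k-2,\alpha}$, because $\partial_2 V \in C^{k-2,\alpha}$ and integrating over $t$ preserves Hölder regularity in $(x,y)$. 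We also have $A(x,x) = d\exp_x|_0^{-1} = \mathrm{Id}$. Setting
\[
G(x,y) = A(x,y)^T g(x) A(x,y)
\]
yields $G\in C^{k-2,\alpha}$ with $G(x,x)=g(x)$, and the sign of $x-y$ is irrelevant since the expression is quadratic.

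For the simple case, Definition \ref{def_ctwo_simple} gives that $\exp_x$ is a diffeomorphism onto a larger simple manifold $\tilde M \supset M$ for every $x\in M$. Hence $\Phi$ is a global bijection from $\{(x,v): \exp_x v \in \tilde M\}$ onto $\tilde M\times\tilde M$. Lemma \ref{lemma_expx_ball}, applied on $\tilde M$ (with $\Omega=\tilde M^{\mathrm{int}}$ and $r$ larger than the diameter), shows that $d(x,y)=|\exp_x^{-1}(y)|_{g(x)}$ globally, using strict convexity and simplicity to guarantee that minimizing curves stay inside. The differential $d\Phi$ is invertible everywhere, since $d\exp_x|_v$ is invertible by the diffeomorphism assumption. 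The inverse function theorem then makes $V=\exp_x^{-1}(y)$ a $C^{k-1,\alpha}$ function on $\overline\Omega\times\overline\Omega$, and so $d^2 = g_{pq}(x)V^pV^q \in C^{k-1,\alpha}$.

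The Taylor step above needs the segment $x+t(y-x)$ to lie in the domain. This holds if $\overline\Omega$ is convex in coordinates; otherwise I would Taylor expand along $t\mapsto \exp_x(tV)$ instead, or work in normal coordinates. I expect the main obstacle to be this global step: checking that the Hölder inverse function theorem applies uniformly up to the boundary, and making the factorization through $y-x$ global. Extending slightly to $\tilde M$ handles the boundary issue.
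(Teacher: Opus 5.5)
Your proposal is correct and follows the paper's proof essentially step for step: the map $(x,v)\mapsto(x,\exp_x v)$ with the H\"older inverse function theorem, $d(x,y)^2=|V(x,y)|_{g(x)}^2$, the Taylor factor $A$ with $G=A^{T}g(x)A$, and global invertibility in the simple case; the only difference is that for the global identity $d(x,y)=|\exp_x^{-1}(y)|_{g(x)}$ the paper appeals directly to Lemma \ref{prop_expx_domain_minimizing}, which is the right tool because $\exp_x^{-1}(\tilde M)$ need not be a ball, so Lemma \ref{lemma_expx_ball} with $r$ larger than the diameter does not literally apply. The segment issue you raise in the global Taylor step is real and the paper passes over it too; your suggested alternatives do not resolve it, since expanding along the geodesic only gives $y-x=B\,V$ with $B=\int_0^1 d\exp_x|_{tV}\,\mathrm{d}t$, which is not obviously invertible, whereas patching with a cutoff does, e.g.\ $G=\chi(x-y)\,A^{T}g(x)A+(1-\chi(x-y))\,d(x,y)^2|x-y|^{-2}\,\mathrm{Id}$ with $0\le\chi\le 1$, $\chi=1$ near $0$ and supported where the local Taylor factor is defined and invertible, which gives $G\in C^{k-2,\alpha}$, $G(x,x)=g(x)$, and keeps $G$ positive definite.
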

\begin{proof}
We consider the map $F(x,v) = (x, E(x,v))$. Then $F$ is $C^{k-1,\alpha}$, $F(x,0) = (x,x)$, and 
\[
DF(x,0) = \begin{pmatrix} I & 0 \\ D_x E(x,0) & I \end{pmatrix}
\]
using that $D_v E(x,0) = D \exp_x|_0 = I$. By the inverse function theorem, $F$ is a $C^{k-1}$ diffeomorphism from some neighborhood of $(x,0)$ onto a neighborhood of $(x,x)$. Since $D(F^{-1})(z) = DF(F^{-1}(z))^{-1}$, we see that $F^{-1}$ is $C^{k-1,\alpha}$. It follows that there is a neighborhood $W$ of the diagonal in $\Omega \times \Omega$ and a $C^{k-1,\alpha}$ map $v: W \to \mathbb{R}^n$ such that $F(x, v(x,y)) = (x,y)$. By Lemma \ref{lemma_expx_ball}, after shrinking $W$ if necessary, we can arrange that $\gamma_{x,v(x,y)}$ is the minimizing geodesic from $x$ to $y$ when $(x,y) \in W$.

Next we observe that for $(x,y) \in W$, one has 
\[
d(x,y)^2 = |v(x,y)|_{g(x)}^2 = g_{jk}(x) v^j(x,y) v^k(x,y).
\]
Since $v$ is $C^{k-1,\alpha}$ and $v(x,x) = 0$, we have the Taylor expansion 
\[
v^j(x,y) = a^j_p(x,y)(x-y)^p,
\]
where $a^j_p \in C^{k-2,\alpha}(W)$. Moreover, since $E(x, v(x,y)) = y$, we have $$I = D_y(E(x, v(x,y)))|_{y=x} = (D \exp_x)_0(D_y v|_{y=x}) = D_y v|_{y=x}$$ and hence we have $a^j_p(x,x) = \partial_p v^j(x,x) = \delta_p^j$. It follows that 
\[
d(x,y)^2 = G_{pq}(x,y)(x-y)^p (x-y)^q,
\]
where $G_{pq}(x,y) = g_{jk}(x) a^j_p(x,y) a^k_q(x,y)$ is $C^{k-2,\alpha}$ in $W$ and satisfies $G_{pq}(x,x) = g_{pq}(x)$.

Suppose now that $(M,g)$ is simple, where $M = \overline{\Omega}$. By the definition of a simple manifold, there is $U_1 \supset M$ such that $(\overline{U}_1, g)$ is simple. Fix $x_0, y_0 \in U_1$ and let $y_0 = E(x_0,v_0)$ where $v_0 \in V_{x_0}$. If $F$ is the map above, we have 
\[
DF(x_0,y_0) = \begin{pmatrix} I & 0 \\ D_x E(x_0,v_0) & D_v E(x_0,v_0) \end{pmatrix}
\]
Here $D_v E(x_0,v_0) = (D \exp_{x_0})(v_0)$ is invertible since $(\overline{U}_1, g)$ is simple. Hence the inverse function theorem gives a $C^{k-1,\alpha}$ function $v(x,y)$ near $(x_0,y_0)$ such that 
\[
E(x,v) = y \text{ for $(x,v)$ near $(x_0,v_0)$ and $y$ near $y_0$} \quad \Longleftrightarrow \quad v = v(x,y).
\]
By the assumption that $\exp_x$ is a diffeomorphism onto $\overline{U}_1$, Lemma \ref{prop_expx_domain_minimizing} ensures that the curve $t \mapsto E(x,t v(x,y))$ is the only geodesic in $\overline{U}_1$ joining $x, y \in U_1$ and therefore 
\[
d(x,y)^2 = |v(x,y)|_{g(x)}^2.
\]
It follows that $d(x,y)^2 \in C^{k-1,\alpha}(U_1 \times U_1)$. From the Taylor expansion argument above, we obtain that $G_{pq} \in C^{k-2,\alpha}(U_1 \times U_1)$.
\end{proof}

\bibliographystyle{alpha}
\bibliography{ref}

\end{document}